\newtheorem{theorem}{Theorem}[section]
\newtheorem{lemma}[theorem]{Lemma}
\theoremstyle{definition}
\newtheorem{remark}[theorem]{Remark}
\numberwithin{equation}{section}
 \theoremstyle{plain}
 \numberwithin{equation}{section} %% Comment out for sequentially-numbered
 \numberwithin{figure}{section} %% Comment out for sequentially-numbered
 \theoremstyle{plain}
 \theoremstyle{remark}
 \newtheorem*{acknowledgement*}{Acknowledgement}
\newcommand{\cB}{{\mathcal B}}
\newcommand{\cF}{{\mathcal F}}
\newcommand{\cG}{{\mathcal G}}
\newcommand{\cH}{{\mathcal H}}
\newcommand{\cI}{{\mathcal I}}
\newcommand{\cL}{{\mathcal L}}
\newcommand{\cQ}{{\mathcal Q}}
\newcommand{\cW}{{\mathcal W}}
\newcommand{\fb}{{\mathfrak b}}
\newcommand{\Te}{{\Theta}}
\newcommand{\te}{{\theta}}
\newcommand{\vt}{{\vartheta}}
\newcommand{\Om}{{\Omega}}
\newcommand{\om}{{\omega}}
\newcommand{\ve}{{\varepsilon}}
\newcommand{\del}{{\delta}}
\newcommand{\Del}{{\Delta}}
\newcommand{\Gam}{{\Gamma}}
\newcommand{\vr}{{\varrho}}
\newcommand{\Sig}{{\Sigma}}
\newcommand{\sig}{{\sigma}}
\newcommand{\al}{{\alpha}}
\newcommand{\be}{{\beta}}
\newcommand{\ka}{{\kappa}}
\newcommand{\vs}{{\varsigma}}
\newcommand{\io}{{\iota}}
\newcommand{\bbR}{{\mathbb R}}
\newcommand{\bbZ}{{\mathbb Z}}
\newcommand{\bbI}{{\mathbb I}}
\begin{document}
\title[]{Strong diffusion approximation in averaging\\
with dynamical systems fast motions}%
 \vskip 0.1cm
 \author{ Yuri Kifer\\
\vskip 0.1cm
 Institute  of Mathematics\\
Hebrew University\\
Jerusalem, Israel}%
\address{
Institute of Mathematics, The Hebrew University, Jerusalem 91904, Israel}
\email{ kifer@math.huji.ac.il}%

\thanks{ }
\subjclass[2000]{Primary: 34C29 Secondary: 60F15, 60G40, 91A05}%
\keywords{averaging, diffusion approximation, $\phi$- and $\psi$-mixing,
 stationary process, shifts, dynamical systems.}%
\dedicatory{To Benji Weiss for his 80ies birthday  }
 \date{\today}
\begin{abstract}\noindent
The paper deals with the fast-slow motions setups in the continuous time
$\frac {dX^\ve(t)}{dt}=\frac 1\ve\Sig (X^\ve(t))\xi(t/\ve^2))+b(X^\ve(t),\,\xi(t/\ve^2),\, t\in [0,T]$ and the discrete time $X^\ve((n+1)\ve^2)=X^\ve(n\ve^2)+\ve\Sig(X^\ve(n\ve^2))\xi(n)
+\ve^2b(X^\ve(n\ve^2),\xi(n))$, $n=0,1,...,[T/\ve^2]$
 where $\Sig$ and $b$ are smooth matrix and vector functions and $\xi$ is a stationary vector stochastic process with weakly dependent terms and such that
 $E\xi(0)=0$. The assumptions imposed on the process $\xi$ allow applications to a wide
class of observables $g$ in the dynamical systems setup so that $\xi$ can be taken in the form $\xi(t)=g(F^t\xi(0))$ or $\xi(n)=g(F^n\xi(0))$ where $F$ is either a flow or a diffeomorphism with some hyperbolicity and $g$ is a vector function.
In this paper we show that both $X^\ve$ and a family of diffusions $\Xi^\ve$ can be redefined on a common sufficiently rich probability space
so that $E\sup_{0\leq t\leq T}|X^\ve(t)-\Xi^\ve(t)|^{p}\leq C\ve^\del,\, p\geq 1$ for some $C,\del>0$ and all $\ve>0$,
 where all $\Xi^\ve,\, \ve>0$ have the same diffusion coefficients but underlying Brownian motions may change with $\ve$.
\end{abstract}
%\footnotetext[1]{}
\maketitle
\markboth{Yu.Kifer}{Strong diffusion approximation}
\renewcommand{\theequation}{\arabic{section}.\arabic{equation}}
\pagenumbering{arabic}

\section{Introduction}\label{dynsec1}\setcounter{equation}{0}
Let $X^\ve$ be the solution of a system of ordinary differential equations having the form
\begin{equation}\label{1.1}
\frac {dX^\ve(t)}{dt}=\frac 1\ve\Sig(X^\ve(t))\xi(t/\ve^2)+b(X^\ve(t),\,\xi(t/\ve^2)),\,\, t\in[0,T]
\end{equation}
where $\Sig(x)$ and $b(x,\xi(s))$ are Lipschitz continuous matrix and vector fields
on $\bbR^d$ and $\xi$ is a stationary process which is viewed as a fast motion while $X^\ve$ is considered as a slow motion. Assume also that
for $s=0$ (and so for all $s$),
\begin{equation}\label{1.2}
E\xi(s)\equiv 0.
\end{equation}

It was shown in a series of papers \cite{Kha}, \cite{PK} and \cite{Bor} that $X^\ve$
(considered in a more general form)
converges weakly as $\ve\to 0$ to a diffusion process provided $\xi$ is sufficiently
fast mixing with respect to $\sig$-algebras generated by $\xi$ itself. It turns out
that the latter condition is quite restrictive when $\xi$ is generated by a dynamical system, i.e.\ when $\xi(t)=g\circ F^t$ where $g$ is a vector function and $F^t$ is a
flow (continuous time dynamical system) preserving certain measure which makes $\xi$
a stationary process. In order to derive weak convergence of $X^\ve$ to a diffusion
for $\xi$ generated by a sufficiently large class of dynamical systems other
approaches were developed recently based mainly on the rough paths theory (see
\cite{KM}, \cite{CFKMZ} and references there). All above mentioned results can
be obtained both in the continuous time setup (\ref{1.1}) and in the discrete time
setup given by the following recurrence relation
\begin{equation}\label{1.3}
X^\ve((n+1)\ve^2)=X^\ve(n\ve^2)+\ve \Sig(X^\ve(n\ve^2))\xi(n)+\ve^2b(X^\ve(n\ve^2),\,\xi(n))
\end{equation}
where $0\leq n<[T/\ve^2]$ and $\xi(n),\, n\geq 0$ is a stationary sequence of random
vectors. Observe that the above results can be viewed as a substantial generalization of the functional
central limit theorem since when $\Sig(x)$ does not depend on $x$ and $b\equiv 0$ the process $X^\ve$ weakly converges to the Brownian motion (with a covariance matrix).

Another, completely different, line of research dealt with extension of limit theorems for sums of random variables from convergence
in distribution or weak convergence to strong approximations or strong invariance principles results. This was done first for
independent random variables and martingales in \cite{Str}
 and extended to weakly dependent random variables in \cite{PS}, dealing in both cases
  with the one dimensional case since their proofs were based on the martingale
   approximation and the Skorokhod embedding theorem. The latter does not work,
    in general, in the multidimensional case (see \cite{MP2}) and another method
 was developed in \cite{BP} to tackle the case of sums $S_n=\xi_1+\cdots +\xi_n$
of weakly dependent random vectors. We observe that until recently (see \cite{MN2} where the general case
was treated) all papers dealing with strong approximations in the
multidimensional case starting with \cite{BP} (see \cite{DP} and references there)
 considered weak dependence or mixing with respect to $\sig$-algebras
generated by random vector summands $\xi(n),\, n\geq 0$ themselves which is quite
 restrictive in applications to dynamical systems. Only in the one dimensional case, which is based on the martingale approximation and the Skorokhod embedding, \cite{PS}
 considers a more general weak dependence setup which allowed to extend strong
 approximation theorems to dynamical systems in \cite{DeP} but only for one dimensional
 observables.

 Recently, strong $L^p$ diffusion approximations appeared in the first time for slow
 motions $X^\ve$ from (\ref{1.1}) and (\ref{1.3}) in \cite{Ki20}. The results there
 are valid in the multidimensional case but only assuming weak dependence of the
 processes $\xi(t),\, t\geq 0$ in (\ref{1.1}) and $\xi(n),\, n\geq 0$ in (\ref{1.3})
 with respect to the $\sig$-algebras generated by these random variables (vectors)
 themselves which, as mentioned above, allows applications to rather restricted class of
 observables in the dynamical systems setup. The purpose of this paper is to extend
 $L^p$ diffusion approximations results assuming more general weak dependence conditions
 which turns out to lead to substantial difficulties and will be achieved here only in
 certain situations. Such extension allows applications to a larger and natural class
 of vector observables in the dynamical systems setup. We stress that our
 goal is to obtain for each parameter value certain $L^p$ bounds on errors of diffusion
 approximations which is somewhat different from eventual almost sure bounds derived
 usually in the papers on strong approximations which dealt before only with sums of
 random variables (or vectors), and so the limiting process there was always the
 Brownian motion (with covariances).

\section{Preliminaries and main results}\label{dynsec2}\setcounter{equation}{0}
\subsection{Discrete time case}
We start with the discrete time setup which consists of a complete probability space
$(\Om,\cF,P)$, a stationary sequence of random vectors $\xi(n)$, $-\infty<n<\infty$ and
a two parameter family of countably generated $\sig$-algebras
$\cF_{m,n}\subset\cF,\,-\infty\leq m\leq n\leq\infty$ such that
$\cF_{mn}\subset\cF_{m'n'}\subset\cF$ if $m'\leq m\leq n
\leq n'$ where $\cF_{m\infty}=\cup_{n:\, n\geq m}\cF_{mn}$ and
$\cF_{-\infty n}=\cup_{m:\, m\leq n}\cF_{mn}$.
We will measure the dependence between $\sig$-algebras $\cG$ and $\cH$ by the $\phi$-coefficient
 defined by
\begin{eqnarray}\label{2.1}
&\phi(\cG,\cH)=\sup\{\vert\frac {P(\Gam\cap\Del)}{P(\Gam)}-P(\Del)\vert:\, P(\Gam)\ne
0,\,\Gam\in\cG,\,\Del\in\cH\}\\
&=\frac 12\sup\{\| E(g|\cG)-Eg\|_\infty:\, g\,\,\mbox{is $\cH$-measurable and }\,\|g\|_\infty=1\}\nonumber
\end{eqnarray}
(see \cite{Bra}) where $\|\cdot\|_\infty$ is the $L^\infty$-norm. For each $n\geq 0$ we set also
\begin{equation}\label{2.2}
\phi(n)=\sup_m\phi(\cF_{-\infty,m},\cF_{m+n,\infty}).
\end{equation}
If $\phi(n)\to 0$ as $n\to\infty$ then the probability measure $P$ is called $\phi$-mixing with respect to the family $\{\cF_{mn}\}$. Unlike \cite{Ki20}, in order
to ensure more applicability of our results to dynamical systems, we do not assume that
$\xi(n)$ is $\cF_{nn}$-measurable and instead we will work with the approximation
 coefficient
\begin{equation}\label{2.3}
\rho(n)=\sup_m\|\xi(m)-E(\xi(m)|\cF_{m-n,m+n})\|_\infty.
\end{equation}
To save notations we will still write $\cF_{mn}$, $\phi(n)$ and $\rho(n)$ for $\cF_{[m][n]}$, $\phi([n])$ and $\rho([n])$, respectively, if $m$ and $n$ are not integers (or $\pm\infty$), where $[\cdot]$ denotes the integral part.

We will deal with the recurrence relation (\ref{1.3}) where we set $X_N=X^{1/\sqrt N}$
so that (\ref{1.3}) takes the form
\begin{equation}\label{2.4}
X_N(n+1/N)=X_N(n/N)+\frac 1{\sqrt N}\Sig(X_N(n/N))\xi(n)+\frac 1Nb(X_N(n/N),\xi(n))
\end{equation}
and this definition is extended to all $t\in[0,T]$ by setting $X_N(t)=X_N(n/N)$ whenever
$n/N\leq t<(n+1)/N$.
We will assume that $\Sig(x)$ and $b(x,\cdot)$ are twice
and once differentiable in the first variable, respectively, and $b$ is Lipschitz continuous in the second variable. To avoid excessive technicalities these coefficients
are supposed to satisfy the following uniform bounds
\begin{equation}\label{2.5}
E\xi(0)=0,\,\|\xi(0)\|_\infty\leq L,\, \sup_{x\in\bbR^d}\max\big(| \Sig(x)|,\,|\nabla_x\Sig(x)|,| \nabla^2_x\Sig(x)|)\leq L,
\end{equation}
\begin{equation}\label{2.6}
\sup_{x,y\in\bbR^d}\max\big(| b(x,y)|,\,|\nabla_x b(x,y)|\big)\leq L\,\,\mbox{and}\,\,
 |b(x,y)-b(x,z)|\leq L|y-z|.
\end{equation}
for some constant $1\leq L<\infty$ and all $x,y,z\in\bbR^d$, where $\Sig=(\Sig_{ij},\,
 1\leq i,j\leq d)$ and $b=(b_1,...,b_d)$ are $d$-dimensional matrices and vectors and
 we take the Euclidean norms.

Set
\[
a_{jk}(x,m,n)=\sum_{i,l=1}^d\Sig_{ji}(x)\vs_{il}(n-m)\Sig_{lk}(x)
\]
where $\vs_{il}(n-m)=E(\xi_i(m)\xi_l(n))=E(\xi_i(0)\xi_l(n-m))$. We will see that under
the conditions of the theorem below there exist limits
\begin{equation}\label{2.13}
\vs_{ij}=\lim_{k\to\infty}\frac 1k\sum_{m=0}^k\sum_{n=0}^k\vs_{ij}(n-m),
\end{equation}
 where $\vs=(\vs_{ij})$, and so, for $j,k=1,...,d,$
\begin{equation}\label{2.9}
a_{jk}(x)=\lim_{n\to\infty}\frac 1n\sum_{k=0}^n\sum_{l=0}^na_{jk}(x,k,l)=
\sum_{i,l=1}^d\Sig_{ji}(x)\vs_{il}\Sig_{lk}(x).
\end{equation}

The matrix $A(x)=(a_{jk}(x))=\Sig(x)\vs\Sig^*(x)$ is twice differentiable
  if $\Sig(x)$ is. Since $\vs$ is symmetric and
nonnegatively definite, there exists $\vs^{1/2}$ such that $\vs^{1/2}\vs^{1/2}=\vs$.
Set $\sig(x)=\Sig(x)\vs^{1/2}$,
\[
c_i(x,m,n)=\sum_{j,k,l=1}^d\frac {\partial\Sig_{ij}(x)}{\partial x_k}\vs_{jl}(n-m)
\Sig_{kl}(x)
\]
and
\begin{equation}\label{2.14}
c_i(x)=\sum_{j,k,l=1}^d\frac {\partial\Sig_{ij}(x)}{\partial x_k}\hat\vs_{jl}
\Sig_{kl}(x),\, i=1,...,d
\end{equation}
where
\begin{equation}\label{2.15}
\hat\vs_{jl}=\lim_{k\to\infty}\frac 1k\sum_{n=0}^k\sum_{m=-k}^{n-1}\vs_{jl}(n-m)=
\sum_{m=1}^\infty E(\xi_j(m)\xi_l(0))
\end{equation}
and the latter limit will be shown to exist under our conditions.
It turns out that $\sig$, $b(x)=Eb(x,\xi(m))$ and $c(x)$ are Lipschitz continuous, and so there exists a unique solution $\Xi$ of the stochastic differential equation
\begin{equation}\label{2.11}
d\Xi(t)=\sig(\Xi(t))dW(t)+(b(\Xi(t))+c(\Xi(t)))dt
\end{equation}
where $W$ is the standard $d$-dimensional Brownian motion. When a non negatively definite symmetric matrix $A(x)$ is fixed then any solution of (\ref{2.11}) with any matrix $\sig$ satisfying $A(x)=\sig(x)\sig^*(x)$ has the same path distribution since this leads to the same Kolmogorov equation and to the same martingale problem (see \cite{SV}).

\begin{theorem}\label{thm2.1} Suppose that (\ref{1.2}) is satisfied, $X_N$ is defined by (\ref{2.4}) with $b$ and $\Sig$ satisfying (\ref{2.5}) and (\ref{2.6}).
 Assume that $\Sig^{-1}(x)=(\hat\Sig_{ij}(x))$ exists,
\begin{equation}\label{2.16}
\sup_x|\Sig^{-1}(x)|\leq L,
\end{equation}
\begin{equation}\label{2.17}
\frac {\partial\hat\Sig_{ij}(x)}{\partial x_k}=\frac {\partial\hat\Sig_{ik}(x)}{\partial x_j}\,\,\,\mbox{for all}\,\, i,j,k=1,...,d
\end{equation}
(which is automatically satisfied if $d=1$), and
\begin{equation}\label{2.18}
\phi(n),\,\rho(n)\leq C_0n^{-4}
\end{equation}
for some constant $C_0>0$ and all $n\geq 1$.
Then for each $N\geq 1$ the slow motion $X$ and the diffusion $\Xi$ having the same initial condition $X_N(0)=\Xi_N(0)$ can be redefined preserving their distributions on the same sufficiently rich probability space, which contains also an i.i.d.
sequence of uniformly distributed random variables, so that for any integers $N,M\geq 1$
and $T>0$,
\begin{equation}\label{2.20}
E\sup_{0\leq t\leq T}|X_N(t)-\Xi_N(t)|^{2M}\leq C(M,T)N^{-\del },
\end{equation}
 where $\del,\, C(M,T)>0$ do not depend on $N$  and
 they can be explicitly recovered from the proof. Here $\Xi=\Xi_N$ depends on $N$
 in the strong but not in
 the weak sense, i.e. the coefficients in (\ref{2.11}) do not depend on $N$ but
 for each $N$ in order to satisfy (\ref{2.20}) we may have to choose an appropriate Brownian motion $W=W_N$ which can be obtained from a
 universal Brownian motion $\cW$ by rescaling $W_N(t)=N^{-1/2}\cW(Nt)$.
 In particular, the Prokhorov distance between the distributions of $X_N$ and of $\Xi_N$ is bounded by $(C(M,T)N^{-\del })^{1/3}$.
\end{theorem}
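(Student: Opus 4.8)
The plan is to realize the strong diffusion approximation in three stages: first pass from the discrete sum driven by $\xi$ to a martingale and then to a Brownian motion via a vector strong approximation theorem, then replace the hyperbolic/mixing driving noise by this Brownian motion inside the equation for $X_N$, and finally close the estimate with a Gr\"onwall-type argument. Concretely, set $S_N(t)=\frac 1{\sqrt N}\sum_{k=0}^{[Nt]-1}\xi(k)$. The first and most substantial step is to show that, after enlarging the probability space to carry an independent i.i.d. uniform sequence, one can construct a Brownian motion $\cW$ (and hence $W_N(t)=N^{-1/2}\cW(Nt)$) with $E\sup_{t\le T}|S_N(t)-\vs^{1/2}W_N(t)|^{2M}\le C N^{-\del_1}$. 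This is exactly the regime where the hypotheses $\phi(n),\rho(n)\le C_0 n^{-4}$ enter: the approximation coefficient $\rho(n)$ lets us replace each $\xi(m)$ by its conditional expectation $E(\xi(m)|\cF_{m-n,m+n})$ with $L^\infty$-error $\rho(n)$, reducing to a genuinely $\phi$-mixing array with respect to the $\cF_{mn}$, to which a multidimensional strong approximation result (in the spirit of \cite{BP}, \cite{DP}, \cite{MN2}, or the $L^p$ version used in \cite{Ki20}) applies with a polynomial rate; the block length in the big-block/small-block decomposition is tuned against the polynomial mixing rate to extract $\del_1>0$. One also must verify here the existence of the limits $\vs_{ij}$, $\hat\vs_{jl}$ in \eqref{2.13} and \eqref{2.15}, which follows from summability of $|\vs_{ij}(n)|$, itself a consequence of $\phi(n),\rho(n)\to 0$ fast enough together with $\|\xi(0)\|_\infty\le L$.

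The second step is to deal with the \emph{quadratic} contribution responsible for the drift correction $c(x)$. Because $\Sig$ depends on $x$, when one compares $X_N$ with the solution driven by $W_N$ the cross terms $\frac1N\sum_k \nabla\Sig(X_N(k/N))\cdot(\text{partial sums of }\xi)\,\xi(k)$ do not vanish; their conditional means produce precisely $c(x)\,dt$ with $c$ as in \eqref{2.14}--\eqref{2.15}, while the martingale part of these quadratic terms is of lower order. Here the symmetry condition \eqref{2.17} on $\Sig^{-1}$ is used: together with the existence of $\Sig^{-1}$ and the bound \eqref{2.16}, it guarantees (via a change of variables $y=\Psi(x)$ with $\nabla\Psi=\Sig^{-1}$, which exists precisely by \eqref{2.17}) that the quadratic corrections assemble into the honest It\^o drift $c$ rather than an anomalous non-closed form, so that the limiting SDE is \eqref{2.11}. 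The drift $b(x,\xi(n))$ is handled more easily: its fluctuation around $b(x)=Eb(x,\xi(m))$ is a weakly dependent sum multiplied by $1/N$, hence contributes an error $O(N^{-\del_2})$ by the same mixing estimates, using the Lipschitz bounds \eqref{2.6}.

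The third step is the coupling/Gr\"onwall closure. Writing the equation for $Y_N(t)=X_N(t)-\Xi_N(t)$ and using the twice-differentiability of $\Sig$ and once-differentiability of $c,b$ (all Lipschitz), one bounds $E\sup_{s\le t}|Y_N(s)|^{2M}$ by $C\int_0^t E\sup_{u\le s}|Y_N(u)|^{2M}\,ds$ plus the accumulated approximation errors $C N^{-\del}$ from Steps 1--2 and the discretization error from the piecewise-constant definition of $X_N$ (which is $O(N^{-1/2})$ in the relevant norm). A Burkholder--Davis--Gundy inequality controls the stochastic-integral terms and a Gr\"onwall lemma yields \eqref{2.20} with $\del=\min(\del_1,\del_2,\dots)$ and $C(M,T)$ growing like $e^{CT}$ times a polynomial in $M$. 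The final assertion about the Prokhorov distance is then immediate from \eqref{2.20} by Markov's inequality: if $E\sup_{t\le T}|X_N-\Xi_N|^{2M}\le C(M,T)N^{-\del}$ then $P(\sup_t|X_N-\Xi_N|>\eta)\le \eta^{-2M}C(M,T)N^{-\del}$, and optimizing with $\eta=(C(M,T)N^{-\del})^{1/3}$ gives the stated bound.

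The main obstacle is Step 1 combined with the bookkeeping in Step 2: under the general weak-dependence hypothesis \eqref{2.18} the summands $\xi(n)$ are \emph{not} $\cF_{nn}$-measurable, so the martingale approximation and the conditioning arguments must be carried out through the $\rho(n)$-approximants $E(\xi(m)|\cF_{m-n,m+n})$, and one must simultaneously (i) keep the approximation error $\rho(n)$ small, (ii) keep enough independence/mixing for the strong approximation of the resulting block sums, and (iii) make sure the corrector $c(x)$ identified through the conditional expectations of the quadratic terms still matches \eqref{2.14}--\eqref{2.15} after these truncations. Balancing these three requirements against the polynomial rate $n^{-4}$ is the delicate technical heart of the argument, and it is the reason the theorem is stated only under the restrictive hypotheses \eqref{2.16}--\eqref{2.18} rather than in full generality.
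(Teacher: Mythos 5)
Your outline assembles the right raw ingredients (conditional--expectation approximants to exploit $\phi$-mixing, a block decomposition feeding a Dehling--Philipp type strong approximation, identification of the corrector $c$, a Gronwall closure), but its architecture has a genuine gap exactly where the coupling is used. In your Step 3 you compare $X_N$ and $\Xi_N$ directly, keeping the state-dependent matrix $\Sig(X_N(k/N))$ in front of the noise, and you propose to absorb the difference of the noise terms into the global bound $E\sup_{t\le T}|S_N(t)-\vs^{1/2}W_N(t)|^{2M}\le CN^{-\del_1}$ from Step 1 plus BDG and Gronwall. That step does not go through: to convert closeness of $S_N$ and $\vs^{1/2}W_N$ into closeness of $N^{-1/2}\sum_k\Sig(X_N(k/N))\xi(k)$ and $\int\Sig(\Xi)\vs^{1/2}dW_N$ one must sum by parts against the increments of $\Sig(X_N(\cdot))$, and this produces of order $N$ terms, each of size $N^{-1/2}\sup_t|S_N-\vs^{1/2}W_N|$, i.e.\ an error of order $N^{1/2-\del_1}$, which is useless because the exponent $\del_1$ available from any such strong approximation theorem for weakly dependent vectors is far below $1/2$. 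The only alternatives are to control iterated sums (L\'evy-area/rough-path data), which your outline nowhere provides, or to remove the multiplicative structure before coupling. Your Step 2 remark that ``the martingale part of these quadratic terms is of lower order'' addresses the identification of the corrector, not this coupling error, so it does not fill the gap.

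This is precisely what hypotheses \eqref{2.16}--\eqref{2.17} are for, and the paper uses them in a different and essential place than you do: \emph{before} any probabilistic coupling one changes variables by a diffeomorphism $r$ with $Dr=\Sig^{-1}$ (possible exactly because of \eqref{2.17}), after which the noise enters additively in the recursion \eqref{3.8}; the It\^o-type correction $q$, and hence $c$, comes out of the Taylor expansion of $r$, and Lemmas \ref{lem3.5}, \ref{lem3.6} together with \eqref{3.4} match the transformed slow motion with the transformed diffusion. Only after this reduction does the paper carry out the analogue of your Steps 1 and 3, and even then not through one global path coupling: $\xi(n)$ is first replaced by $\xi^{(m_N)}(n)$ (Lemma \ref{lem3.7}), the strong approximation (Theorem \ref{thm4.1}) is applied blockwise to normalized block sums with coefficients frozen at block starts, and the accumulated coupling differences are controlled in $L^{2M}$ via the moment Lemmas \ref{lem3.2}, \ref{lem3.3} and Lemma \ref{lem4.2} before a discrete Gronwall argument closes the estimate. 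In your proposal the transformation appears only parenthetically, as a device for naming $c$, while the multiplicative structure is retained in the main comparison; as written, the key estimate of Step 3 is unjustified, and repairing it essentially forces you onto the paper's route (transform first, then couple blockwise with frozen coefficients). Your final Prokhorov-distance remark is fine and matches the standard argument.
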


The conditions of Theorem \ref{thm2.1} enable us in Section \ref{dynsec3} to reduce
the setup to the situation where $\Sig$ in (\ref{2.4}) is the identity matrix.
This allows to replace the process $\xi(n)$ by a process $\xi^{m_N}(n)=E(\xi(n)|\cF_{n-m_N,n+m_N})$ with an appropriate
$m_N\to\infty$ as $N\to\infty$ which, in turn, enables us to use the $\phi$-mixing effectively.
 After the replacement of $\xi(n)$ by $\xi^{m_N}(n)$ we rely on the strong approximation theorem
 and arrive at the required estimate comparing several auxiliary processes.

\begin{remark}\label{rem2.1+}
The reduction of the setup of Theorem \ref{thm2.1} to the situation where $\Sig$
is the the identity map allows
to obtain an almost sure version of the result, namely, to show that as $N\to\infty$ almost surely,
\[
\sup_{0\leq t\leq T}|X_N(t)-\Xi_N(t)|=O(N^{-\hat\del})
\]
for some $\hat\del>0$ independent of $N$.
%This is first obtained for a subsequence $N_K=K^{[2/\del]+1},\, K=1,2,...$
%using (\ref{2.20}), Chebyshev's inequality and the Borel-Cantelli lemma. %Secondly, we show that both $X_N$ and $\Xi_N$
%change negligibly when $N$ varies between $N_K$ and $N_{K+1}$.
\end{remark}

Important classes of processes satisfying our conditions come from
dynamical systems. Let $F$ be a $C^2$ Axiom A diffeomorphism (in
particular, Anosov) in a neighborhood of an attractor or let $F$ be
an expanding $C^2$ endomorphism of a Riemannian manifold $\Om$ (see
\cite{Bow}), $g$ be a either H\" older continuous vector functions or
vector functions
which are constant on elements of a Markov partition and let $\xi(n)=
\xi(n,\om)=g(F^n\om)$. Here the probability
space is $(\Om,\cB,P)$ where $P$ is a Gibbs invariant measure corresponding
to some H\"older continuous function and $\cB$ is the Borel $\sig$-field. Let
$\zeta$ be a finite Markov partition for $F$ then we can take $\cF_{kl}$
 to be the finite $\sig$-algebra generated by the partition $\cap_{i=k}^lF^i\zeta$.
 In fact, we can take here not only H\" older continuous $g$'s but also indicators
of sets from $\cF_{kl}$. The conditions of Theorem \ref{thm2.1} allow all such functions
since the dependence of H\" older continuous functions on $m$-tails, i.e. on events measurable
with respect to $\cF_{-\infty,-m}$ or $\cF_{m,\infty}$, decays exponentially fast in $m$ and
the condition (\ref{2.18}) is even weaker than that.  A related class of dynamical systems
corresponds to $F$ being a topologically mixing subshift of finite type which means that $F$
is the left shift on a subspace $\Om$ of the space of one (or two) sided
sequences $\om=(\om_i,\, i\geq 0), \om_i=1,...,l_0$ such that $\om\in\Om$
if $\pi_{\om_i\om_{i+1}}=1$ for all $i\geq 0$ where $\Pi=(\pi_{ij})$
is an $l_0\times l_0$ matrix with $0$ and $1$ entries and such that $\Pi^n$
for some $n$ is a matrix with positive entries. Again, we have to take in this
case $g$ to be H\" older continuous bounded functions on the sequence space above,
$P$ to be a Gibbs invariant measure corresponding
to some H\" older continuous function and to define
$\cF_{kl}$ as the finite $\sig$-algebra generated by cylinder sets
with fixed coordinates having numbers from $k$ to $l$. The
exponentially fast $\psi$-mixing, which is stronger than $\phi$-mixing
required here, is well known in the above cases (see \cite{Bow}). Among other
dynamical systems with exponentially fast $\psi$-mixing we can mention also the Gauss map
$Fx=\{1/x\}$ (where $\{\cdot\}$ denotes the fractional part) of the
unit interval with respect to the Gauss measure $G$ and more general transformations generated
by $f$-expansions (see \cite{Hei}). Gibbs-Markov maps which are known to be exponentially fast
$\phi$-mixing (see, for instance, \cite{MN}) can be taken as $F$ in Theorem \ref{thm2.1}(i)
with $\xi(n)=g\circ F^n$ as above.

\subsection{Continuous time case}

Here we start with a complete probability space $(\Om,\cF,P)$, a
$P$-preserving invertible transformation $\vt:\,\Om\to\Om$ and
a two parameter family of countably generated $\sig$-algebras
$\cF_{m,n}\subset\cF,\,-\infty\leq m\leq n\leq\infty$ such that
$\cF_{mn}\subset\cF_{m'n'}\subset\cF$ if $m'\leq m\leq n
\leq n'$ where $\cF_{m\infty}=\cup_{n:\, n\geq m}\cF_{mn}$ and
$\cF_{-\infty n}=\cup_{m:\, m\leq n}\cF_{mn}$. The setup includes
also a (roof or ceiling) function $\tau:\,\Om\to (0,\infty)$ such that
for some $\hat L>0$,
\begin{equation}\label{2.21}
\hat L^{-1}\leq\tau\leq\hat L.
\end{equation}
Next, we consider the probability space $(\hat\Om,\hat\cF,\hat P)$ such that $\hat\Om=\{\hat\om=
(\om,t):\,\om\in\Om,\, 0\leq t\leq\tau(\om)\},\, (\om,\tau(\om))=(\vt\om,0)\}$, $\hat\cF$ is the
restriction to $\hat\Om$ of $\cF\times\cB_{[0,\hat L]}$, where $\cB_{[0,\hat L]}$ is the Borel
$\sig$-algebra on $[0,\hat L]$ completed by the Lebesgue zero sets, and for any $\Gam\in\hat\cF$,
\[
\hat P(\Gam)=\bar\tau^{-1}\int_\Gam\bbI_\Gam(\om,t)dP(\om)dt\,\,\mbox{where}\,\,\bar\tau=\int\tau dP=E\tau
\]
and $E$ denotes the expectation on the space $(\Om,\cF,P)$.

Finally, we introduce a vector valued stochastic process $\xi(t)=\xi(t,(\om,s))$, $-\infty<t<\infty,\, 0\leq
s\leq\tau(\om)$ on $\hat\Om$ satisfying
\begin{eqnarray*}
&\xi(t,(\om,s))=\xi(t+s,(\om,0))=\xi(0,(\om,t+s))\,\,\mbox{if}\,\, 0\leq t+s<\tau(\om)\,\,\mbox{and}\\
&\xi(t,(\om,s))=\xi(0,(\vt^k\om,u))\,\,\mbox{if}\,\, t+s=u+\sum_{j=0}^k\tau(\vt^j\om)\,\,\mbox{and}\,\,
0\leq u<\tau(\vt^k\om).
\end{eqnarray*}
This construction is called in dynamical systems a suspension and it is a standard fact that $\xi$ is a
stationary process on the probability space $(\hat\Om,\hat\cF,\hat P)$ and in what follows we will write
also $\xi(t,\om)$ for $\xi(t,(\om,0))$.

We will assume that $X^\ve(t)=X^\ve(t,\om)$ from (\ref{1.1}) considered as a process on $(\Om,\cF,P)$
solves the equation
\begin{equation}\label{2.22}
\frac {dX^\ve(t)}{dt}=\frac 1\ve \Sig(X^\ve(t))\xi(t/\ve^2)+b(X^\ve(t),\,\xi(t/\ve^2)),\,\, t\in[0,T]
\end{equation}
where the matrix function $\Sig$ and the process $\xi$ satisfy (\ref{2.5}). Set
\begin{eqnarray}\label{2.23}
&\eta(\om)=\int_0^{\tau(\om)}\xi(s,\om)ds,\,\,\,\hat b(x,\om)=\int_0^{\tau(\om)}b(x,\xi(s,\om))ds\,\,\mbox{and}\\
&\rho(n)=\sup_m\max\big(\|\tau\circ\vt^m-E(\tau\circ\vt^m|\cF_{m-n,m+n})\|_\infty,\nonumber\\
&\|\eta\circ\vt^m-E(\eta\circ\vt^m|\cF_{m-n,m+n})\|_\infty,\nonumber\\
&\sup_x\|\hat b(x,\cdot)\circ\vt^m-E(\hat b(x,\cdot)\circ\vt^m|\cF_{m-n,m+n})\|_\infty\big).\nonumber
\end{eqnarray}
Since we will assume that $b(x,\zeta)$ is Lipschitz continuous in the first variable the last $\sup_x$ is
still measurable. Observe also that $\eta(k)=\eta\circ\vt^k$ and $\hat b(x,\cdot)\circ\vt^k$, $k\in\bbZ$ are
stationary sequences of random vectors.

Next, we consider a diffusion process $\Xi$ solving the stochastic differential equation (\ref{2.11}) with
$\sig^2(x)=A(x)=(a_{jk}(x))_{j,k=1,...,d}$, $b(x)=E\hat b(x,\cdot)$,
\begin{equation}\label{2.24}
a_{jk}(x)=\sum_{i,l=1}^d\Sig_{ji}(x)\vs_{il}\Sig_{lk}(x),\,\vs_{ij}=\lim_{n\to\infty}\frac 1n
\sum_{k,l=0}^nE(\eta_i(k)\eta_j(l))
\end{equation}
and
\begin{eqnarray}\label{2.25}
&c_{i}(x)=\sum_{j,k,l=1}^d\frac {\partial\Sig_{ij}(x)}{\partial x_k}\big(\hat\vs_{jl}+\frac 12E(\eta_j(0)\eta_l(0))\big)\Sig_{kl}(x),\\
&\hat\vs_{ij}=\lim_{n\to\infty}\frac 1n\sum_{k=0}^n\sum_{l=-n}^{k-1}E(\eta_i(k)\eta_j(l))=\sum_{m=1}^\infty E(\eta_i(m)\eta_j(0)).\nonumber
\end{eqnarray}
Notice the difference in the definitions of $c(x)$ in (\ref{2.14}) and in (\ref{2.25}) which is due to the fact that $c(x)$
is defined here in terms of the process $\eta$ and not $\xi$. The following result is a continuous time version of
Theorem \ref{thm2.1}(i).

\begin{theorem}\label{thm2.2} Assume that $E\eta=E\int_0^\tau\xi(t)dt=0$, $\xi$ and $\Sig$ satisfy the bound from (\ref{2.5})
and (\ref{2.16})--(\ref{2.18}) (with $\rho$ from (\ref{2.23}))  holds true, as well. Then the limits in (\ref{2.24}) and (\ref{2.25})
exist and for each $\ve>0$ the slow motion $X^\ve$ and the diffusion $\Xi=\Xi^\ve$ defined by (\ref{2.11}) which have the
same initial conditions $X^\ve(0)=\Xi^\ve(0)$ can be redefined preserving their (joint) distributions on the same sufficiently rich probability space, which contains also an i.i.d.
sequence of uniformly distributed random variables so that
for any integer $M\geq 1$ and numbers $\ve,T>0$,
\begin{equation}\label{2.26}
E\sup_{0\leq t\leq T}|X^\ve(t)-\Xi^\ve(t/\bar\tau)|^{2M}\leq C(M,T)\ve^\delta
\end{equation}
where $\del,\, C(M,T)>0$ do not depend on $\ve$. Here $\Xi=\Xi^\ve$ depends on $\ve$
 in the strong but not in the weak sense, i.e. the coefficients in (\ref{2.11}) do not depend on $\ve$ but
 for each $\ve$ in order to satisfy (\ref{2.26}) we may have to choose an appropriate Brownian motion $W=W_\ve$ which can be obtained from a universal Brownian motion $\cW$ by rescaling $W_\ve(t)=\ve\cW(\ve^{-2}t)$.
 In particular,
  the Prokhorov distance between the distributions of $X^\ve$ and of $\Xi^\ve$ is bounded by $(C(M,T)\ve^{\del })^{1/3}$.
\end{theorem}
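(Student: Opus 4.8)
\textbf{Proof proposal for Theorem \ref{thm2.2}.}

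The plan is to reduce the continuous-time problem to the already-established discrete-time Theorem \ref{thm2.1} by observing along the flow at the successive return times to the base $\Om$, so that the suspension structure converts the continuous dynamics into a sum over integer iterates of $\vt$. Concretely, write $t_k(\om)=\sum_{j=0}^{k-1}\tau(\vt^j\om)$ for the $k$-th return time, and note that integrating (\ref{2.22}) over one ``floor'' $[t_k,t_{k+1}]$ produces increments governed by $\eta\circ\vt^k=\int_0^{\tau(\vt^k\om)}\xi(s,\vt^k\om)ds$ and $\hat b(x,\cdot)\circ\vt^k$, which by the remarks preceding the statement are stationary sequences of random vectors. The first step, then, is to show that the sampled slow motion $\tilde X^\ve(k):=X^\ve(\ve^2 t_k)$ satisfies a recurrence of exactly the form (\ref{2.4}) with $\xi(n)$ replaced by $\eta\circ\vt^n$, modulo an error term of order $\ve^2$ per step coming from (i) the Lipschitz dependence of $\Sig$ and $b$ on $X^\ve$ over a floor of length $O(\ve^2)$ (using $|\eta|,\tau\le\hat L\cdot L$ and (\ref{2.21})), and (ii) the discrepancy between $\int_0^\tau b(x,\xi(s))ds$ and $\tau\, b(x)$-type averaging; accumulating $[T/\ve^2]$ such steps keeps these under control. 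The approximation coefficient $\rho(n)$ in (\ref{2.23}) is precisely engineered so that $(\eta\circ\vt^n)$, $(\tau\circ\vt^n)$ and $(\hat b(x,\cdot)\circ\vt^n)$ all satisfy the hypotheses (\ref{2.18}) needed to invoke Theorem \ref{thm2.1}.

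The second step is to apply Theorem \ref{thm2.1} to this discrete system: on a sufficiently rich common probability space carrying an i.i.d. uniform sequence and a universal Brownian motion $\cW$, we obtain a diffusion $\tilde\Xi^\ve$ with coefficients $\sig,\,b,\,c$ as in (\ref{2.11}) such that $E\sup_{0\le k\le [T/\ve^2]}|\tilde X^\ve(k)-\tilde\Xi^\ve(k\ve^2)|^{2M}\le C(M,T)\ve^\del$. Here one must check that the diffusion coefficients produced by Theorem \ref{thm2.1} applied to the $\eta$-process are exactly $a_{jk},\, b,\, c$ as defined in (\ref{2.24})--(\ref{2.25}); in particular the extra term $\frac12 E(\eta_j(0)\eta_l(0))$ in (\ref{2.25}) must emerge — it does, because in the discrete theorem $\vs_{ij}=\lim\frac1k\sum_{m,n=0}^k\vs_{ij}(n-m)$ sums the $m=n$ diagonal once while the Stratonovich-type correction $\hat\vs$ in (\ref{2.15}) omits it, and the regrouping that identifies $\sum_{m\ge1}E(\eta_j(m)\eta_l(0))$ with the continuous-time drift correction leaves exactly one residual half-diagonal; a direct comparison of (\ref{2.14})--(\ref{2.15}) with (\ref{2.25}) pins this down. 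The existence of the limits in (\ref{2.24}) and (\ref{2.25}) follows from the same summable-correlation estimates that underlie (\ref{2.13}) and (\ref{2.15}), now applied to $\eta$, using $\|\eta\|_\infty\le\hat L L$ and $\phi(n),\rho(n)\le C_0 n^{-4}$.

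The third step is to pass from the sampled diffusion back to a genuine continuous-time diffusion and to fill in the ``off-lattice'' times. Define $\Xi^\ve$ to be the solution of (\ref{2.11}) driven by $W_\ve(t)=\ve\cW(\ve^{-2}t)$; then $\Xi^\ve(t/\bar\tau)$ observed at $t=\ve^2 t_k$ must be compared to $\tilde\Xi^\ve(k\ve^2)$. The key identity is that the strong time-change $t\mapsto \ve^2 t_k$ is, up to fluctuations, the deterministic scaling $t\mapsto k\ve^2\bar\tau$, since $t_k/k\to\bar\tau=E\tau$ with fluctuations controlled by the weak dependence of $(\tau\circ\vt^j)$: by a maximal inequality for the $\phi$-mixing sequence $\tau\circ\vt^j-\bar\tau$ (whose mixing and $\rho$-approximation rates are hypothesized), $E\sup_{k\le [T/\ve^2]}|t_k-k\bar\tau|^{2M}=O(\ve^{-2M}\cdot(\ve^{-2})^M\cdot(\text{mixing gain}))$, which after multiplication by $\ve^2$ and division by $\bar\tau$ gives a power of $\ve$; feeding this into the Lipschitz-in-time modulus of continuity of the diffusion $\Xi^\ve$ (itself $O(\ve^{1/2-})$ over intervals of length $O(\ve^2)$ by Kolmogorov's criterion, uniformly because $\sig,b,c$ are bounded and Lipschitz) bridges the gap. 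Finally, for $t$ between two return times the slow motion moves by $O(\ve)$ (the $\frac1\ve\Sig\xi$ term integrated over a floor of length $\ve^2$) and the diffusion by $O(\ve^{1-})$, so the supremum over all $t\in[0,T]$ is absorbed; combining the three $L^{2M}$ estimates via the triangle inequality yields (\ref{2.26}), and the Prokhorov bound follows from the standard fact that $L^{2M}$-closeness of order $\ve^\del$ forces Prokhorov distance $\le(C\ve^\del)^{1/3}$.

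\textbf{Main obstacle.} The delicate point is the random time-change bookkeeping in the third step: one is comparing the slow motion sampled at the \emph{irregular} grid $\{\ve^2 t_k\}$ against a diffusion run on the \emph{regular} clock, and the error incurred by replacing $t_k$ with $k\bar\tau$ must be shown to be a genuine power of $\ve$ in $L^{2M}\sup$-norm after the $\ve^2$-rescaling — this requires a maximal moment bound on the partial sums of $\tau\circ\vt^j-\bar\tau$ with a quantitatively good rate (so that the $O((\ve^{-2})^{3/2})$-type growth of the crude bound is beaten down to $o(\ve^{-2})$ by the mixing/approximation decay), together with a uniform modulus-of-continuity estimate for the family $\Xi^\ve$ that is robust under the $\ve$-dependent driving Brownian motion $W_\ve=\ve\cW(\ve^{-2}\cdot)$. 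Getting these two estimates to combine with the right exponents — and making sure the resulting $\del$ is positive — is where the real work lies; everything else is a faithful transcription of the discrete-time machinery through the suspension.
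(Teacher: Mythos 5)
Your overall architecture (discretize along the suspension's return times, feed the resulting $\eta\circ\vt^n$-driven recurrence into Theorem \ref{thm2.1}, then repair the random time change) is the same as the paper's, but your first step has a genuine gap that propagates through everything else. You sample $X^\ve$ itself at the return times and declare that the within-floor variation of $\Sig(X^\ve)$ against the $\frac1\ve$-fast term is ``an error term of order $\ve^2$ per step'' which ``accumulating $[T/\ve^2]$ such steps keeps under control.'' It is not under control: $O(\ve^2)$ per step over $[T/\ve^2]$ steps is $O(1)$, and this contribution is not an error at all --- it is precisely the source of the extra drift. Over one floor the increment is $\ve\int_{t_k}^{t_{k+1}}\Sig(X^\ve(\ve^2s))\xi(s)\,ds$, and freezing $\Sig$ at the left endpoint discards a term whose conditional mean per step is of order $\ve^2\,\frac{\partial\Sig_{ij}}{\partial x_k}\Sig_{kl}\,E\int_0^\tau\!\int_0^s\xi_j(s)\xi_l(u)\,du\,ds$; summed over $\sim T/\ve^2$ floors this is exactly where the half-diagonal $\frac12E(\eta_j(0)\eta_l(0))$ in (\ref{2.25}) comes from. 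Your alternative explanation --- that the half-diagonal falls out of a diagonal-counting discrepancy between (\ref{2.13})/(\ref{2.15}) and (\ref{2.24})/(\ref{2.25}) --- cannot work: if your claimed recurrence ``of exactly the form (\ref{2.4}) with $\xi(n)$ replaced by $\eta\circ\vt^n$'' were correct up to negligible errors, Theorem \ref{thm2.1} would produce the drift correction (\ref{2.14}) built from the $\eta$-correlations $\hat\vs_{jl}=\sum_{m\ge1}E(\eta_j(m)\eta_l(0))$ alone, i.e.\ a diffusion whose drift differs from (\ref{2.25}) by $\sum_{j,k,l}\frac{\partial\Sig_{ij}}{\partial x_k}\,\frac12E(\eta_j(0)\eta_l(0))\,\Sig_{kl}$, which is generically nonzero; as written you would prove strong approximation by the wrong diffusion.

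The paper avoids this by changing coordinates \emph{before} discretizing: under (\ref{2.16})--(\ref{2.17}) there is a diffeomorphism $r$ with $Dr=\Sig^{-1}$, and $Z^\ve=r(X^\ve)$ satisfies (\ref{5.1}), where the $\frac1\ve$-term has constant (identity) coefficient. Then freezing within a floor only affects the $O(\ve^2)$ drift term, the per-step error is $O(\ve^3)$ and summable (Lemma \ref{lem5.1}), Theorem \ref{thm2.1} applies to the recurrence (\ref{5.5}) and yields the diffusion (\ref{5.2}); the extra $\frac12E(\eta_j(0)\eta_l(0))$ in (\ref{2.25}) then appears automatically when $\Psi$ is pulled back through $r^{-1}$ by the It\^o formula (the computation of Lemma \ref{lem3.6}), not from any bookkeeping inside the discrete covariances. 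Your third step (maximal moment bounds on $t_k-k\bar\tau$, then a modulus-of-continuity bridge) is in the spirit of the paper's Lemma \ref{lem5.2}, but note that the typical rescaled time discrepancy is $\ve^2|\Te_k-k\bar\tau|\sim\ve$, not $\ve^2$, and the paper bridges along $Z^\ve$ (window sums of $\eta$ plus (\ref{5.17})) rather than along the diffusion; also Theorem \ref{thm2.1} is stated only for $\ve=N^{-1/2}$ and for increments $\hat\fb(z,\cdot)\circ\vt^n$ more general than functions of a single $\xi(n)$, two points the paper settles by an explicit interpolation argument and a remark, and which your proposal leaves untouched. These smaller issues are fixable, but the first step must be repaired --- either by inserting the $r$-transformation, or by carrying the second-order within-floor term explicitly and proving it converges to the missing drift.
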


The strategy of the proof of Theorem \ref{thm2.2} consists of three steps. First, we transform (\ref{2.22}) to another
equation where $\Sig$ is the identity matrix. Secondly, we consider a related discrete time setup which is treated by
means of Theorem \ref{thm2.1}. Thirdly, we see that the difference between this discrete and the original continuous
time processes is small.

The main application to dynamical systems we have here in mind is a $C^2$ Axiom A flow $F^t$ near an attractor which
using Markov partitions can be represented as a suspension over an exponentially fast $\psi$-mixing transformation so that
we can take $\xi(t)=g\circ F^t$ for a H\" older continuous function $g$ and the probability $P$ being a Gibbs invariant
measure constructed by a H\" older continuous potential on the base of the Markov partition (see, for instance, \cite{BR}). We observe that Prokhorov
distance estimates between the distributions of $X^\ve$ and $\Xi^\ve$ in the
one dimensional case for the (discrete time) suspension setup similar to
Theorem \ref{thm2.2} were obtained recently in \cite{AM}.

\section{Preliminary estimates  }\label{dynsec3}\setcounter{equation}{0}
\subsection{General lemmas}
First, we will formulate three general results which will be used throughout this
paper. The following lemma is well known (see, for instance, Corollary
to Lemma 2.1 in \cite{Kha} or Lemma 1.3.10 in \cite{HK}).
\begin{lemma}\label{lem3.1}
Let $H(x,\om)$ be a bounded measurable function on the space $(\bbR^d\times\Om,\,\cB\times\cF)$,
where $\cB$ is the Borel $\sig$-algebra, such that for each $x\in\bbR^d$ the function $H(x,\cdot)$
is measurable with respect to a $\sig$-algebra $\cG\subset\cF$. Let $V$ be an $\bbR^d$-valued
random vector measurable with respect to another $\sig$-algebra $\cH\subset\cF$.
Then with probability one,
\begin{equation}\label{3.1}
 |E(H(V,\om)|\cH)-h(V)|\leq 2\phi(\cG,\cH)\| H\|_{\infty}
 \end{equation}
where $h(x)=EH(x,\cdot)$ and the $\phi$-dependence coefficient was defined in (\ref{2.1}).
In particular (which is essentially an equivalent statement), let
 $H(x_1,x_2),\, x_i\in\bbR^{d_i},\, i=1,2$ be a bounded Borel function and $V_i$ be
 $\bbR^{d_i}$-valued $\cG_i$-measurable random vectors, $i=1,2$ where $\cG_1,\cG_2\subset\cF$ are
 sub $\sig$-algebras. Then with probability one,
 \begin{equation*}
 |E(H(V_1,V_2)|\cG_1)-h(V_1)|\leq 2\phi(\cG_1,\cG_2)\| H\|_{\infty}.
 \end{equation*}
 \end{lemma}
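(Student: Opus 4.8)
The plan is to prove the first, scalar-kernel statement and to read off the ``in particular'' form from it by interchanging the roles of the two $\sig$-algebras. I would fix once and for all a version of every conditional expectation occurring below; since only countably many of them enter the argument, all the a.s.\ inequalities can be arranged to hold off a single $P$-null set. Throughout I would also assume (as is the case in every application) that $H$ is jointly $\cB\times\cG$-measurable, a reduction effected by a routine approximation that uses the sectionwise $\cG$-measurability.

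First I would verify (\ref{3.1}) for a \emph{simple kernel} $H(x,\om)=\sum_{j=1}^{r}g_j(x)\bbI_{A_j}(\om)$, where $g_1,\dots,g_r$ are bounded Borel functions on $\bbR^d$ and $\{A_1,\dots,A_r\}\subset\cG$ is a finite partition of $\Om$, so that $\|H\|_\infty=\max_j\|g_j\|_\infty$. Since $V$ is $\cH$-measurable, each $g_j(V)$ may be pulled out of $E(\,\cdot\mid\cH)$, so
\[
E\big(H(V,\cdot)\mid\cH\big)-h(V)=\sum_{j=1}^{r}g_j(V)\big(P(A_j\mid\cH)-P(A_j)\big),\qquad h(x)=\sum_{j=1}^{r}g_j(x)P(A_j),
\]
whence the left side of (\ref{3.1}) is at most $\|H\|_\infty\sum_{j=1}^{r}|P(A_j\mid\cH)-P(A_j)|$. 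Because $\{A_j\}$ is a partition, $\sum_j(P(A_j\mid\cH)-P(A_j))=0$, so for a.e.\ $\om$, setting $B_\om:=\bigcup\{A_j:\,P(A_j\mid\cH)(\om)>P(A_j)\}\in\cG$, one gets $\sum_j|P(A_j\mid\cH)(\om)-P(A_j)|=2(P(B_\om\mid\cH)(\om)-P(B_\om))\le 2\phi(\cG,\cH)$, the last step being the defining inequality (\ref{2.1}) applied to the $\cG$-set $B_\om$; here only finitely many sets can occur as $B_\om$, so the corresponding null sets are discarded once and for all. This yields (\ref{3.1}) for simple kernels with the stated constant $2$.

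To pass to a general bounded $H$ I would invoke the functional monotone class theorem: the set of bounded $\cB\times\cG$-measurable $H$ for which (\ref{3.1}) holds for every $\cH$-measurable $V$ is a vector space (the right side is sub-additive in $H$ since $\|H_1+H_2\|_\infty\le\|H_1\|_\infty+\|H_2\|_\infty$), it contains the simple kernels above and hence all indicators $\bbI_{B\times A}$ with $B$ Borel and $A\in\cG$, and it is closed under uniformly bounded monotone limits because $E(H_n(V,\cdot)\mid\cH)\to E(H(V,\cdot)\mid\cH)$ a.s.\ and $h_n(V)\to h(V)$; it therefore contains all bounded $\cB\times\cG$-measurable functions. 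The ``in particular'' assertion then follows by applying this to $\tilde H(x,\om):=H(x,V_2(\om))$, which is bounded by $\|H\|_\infty$ and, for each fixed $x$, $\cG_2$-measurable in $\om$, taking $(\cG_2,\cG_1,V_1)$ in place of $(\cG,\cH,V)$: indeed $\tilde H(V_1,\cdot)=H(V_1,V_2)$ and $\tilde h(x)=EH(x,V_2)$. The elementary kernel computation is immediate; the step I expect to require the most care is the bookkeeping around it — ensuring that all conditioning identities and the comparison with the (random) set $B_\om$ hold simultaneously off one null set, and that the family to which the monotone class theorem is applied is genuinely a vector space containing a $\pi$-system generating $\cB\times\cG$.
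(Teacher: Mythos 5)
The paper itself gives no proof of this lemma --- it is quoted as known, with references to \cite{Kha} and \cite{HK} --- so your argument has to stand entirely on its own. Its first half is sound: for a simple kernel $H(x,\om)=\sum_j g_j(x)\bbI_{A_j}(\om)$ you correctly pull $g_j(V)$ out of the conditional expectation, use $\sum_j\big(P(A_j|\cH)-P(A_j)\big)=0$ to convert the $\ell^1$-sum into twice its positive part evaluated on a single $\cG$-set $B_\om$, and bound that by the $\phi$-coefficient; the finitely-many-null-sets bookkeeping is also handled correctly. (A minor point you inherit from the paper: conditioning a $\cG$-set on $\cH$ is controlled, per the literal definition (\ref{2.1}), by $\phi(\cH,\cG)$ rather than $\phi(\cG,\cH)$; this matches the ``in particular'' form of the lemma, so the inconsistency is in the statement, not in your computation.)

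The genuine gap is the extension step. Writing $D(H)=E(H(V,\cdot)|\cH)-h(V)$, the class $\cV$ of bounded $H$ with $|D(H)|\leq 2\phi\|H\|_\infty$ a.s.\ is \emph{not} a vector space: from $|D(H_1)|\leq 2\phi\|H_1\|_\infty$ and $|D(H_2)|\leq 2\phi\|H_2\|_\infty$ you only get $|D(H_1+H_2)|\leq 2\phi(\|H_1\|_\infty+\|H_2\|_\infty)$, whereas membership of $H_1+H_2$ in $\cV$ requires the bound $2\phi\|H_1+H_2\|_\infty$, which can be strictly smaller; the sub-additivity $\|H_1+H_2\|_\infty\leq\|H_1\|_\infty+\|H_2\|_\infty$ that you invoke goes in the wrong direction for this purpose. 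Consequently the functional monotone class theorem does not apply to $\cV$, and this is not cosmetic --- the passage from rectangles to general jointly measurable $H$ is precisely the delicate part of the lemma. The standard repair is to run the monotone class argument on an \emph{identity} rather than on the inequality: using that $\cG$ is countably generated, take a regular conditional probability $Q_\om$ of $P$ given $\cH$ restricted to $\cG$, prove the freezing identity $E(H(V,\cdot)|\cH)(\om)=\int H(V(\om),\om')\,Q_\om(d\om')$ a.s.\ by monotone class (the class of $H$ satisfying an a.s.\ identity \emph{is} a vector space closed under bounded monotone limits), and separately establish $\sup_{A\in\cG}|Q_\om(A)-P(A)|\leq\phi$ for a.e.\ $\om$ --- this is where your partition computation enters, upgraded from a generating algebra to all of $\cG$ by a monotone class argument on sets, legitimate because both $Q_\om$ and $P$ are measures. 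The inequality (\ref{3.1}) then follows from the total variation bound $\|Q_\om-P\|_{\mathrm{TV}}\leq 2\phi$. An alternative route first treats countably valued $V$ (where for each fixed value $x_k$ the bound is immediate from the second line of (\ref{2.1}) applied to $H(x_k,\cdot)$) and then passes to general $V$, but that limit again requires either continuity of $H$ in $x$ or regular conditional distributions, so it does not rescue the purely measurable case by itself.
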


 We will employ several times the following general moment estimate which appeared as Lemma 3.2.5 in \cite{HK} for random variables and was extended to random vectors in
 Lemma 3.4 from \cite{Ki20}.
  \begin{lemma}\label{lem3.2}
  Let $(\Om,\cF,P)$ be a probability space,  $\cG_j,\, j\geq 1$ be a filtration of $\sig$-algebras and $\eta_j,\, j\geq 1$ be
  a sequence of random $d$-dimensional vectors such that $\eta_j$
  is $\cG_j$-measurable, $j=1,2,...$. Suppose that for some integer $M\geq 1$,
  \[
  A_{2M}=\sup_{i\geq 1}\sum_{j\geq i}\| E(\eta_j|\cG_i)\|_{2M}<\infty
  \]
  where $\|\eta\|_p=(E|\eta|^p)^{1/p}$ and $|\eta|$ is the Euclidean norm of a (random) vector $\eta$.
  Then for any integer $n\geq 1$,
  \begin{equation}\label{3.2}
  E|\sum_{j=1}^n\eta_j|^{2M}\leq 3(2M)!d^MA_{2M}^{2M}n^M.
  \end{equation}
  \end{lemma}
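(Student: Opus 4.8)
The plan is to reduce at once to scalar summands and then run the classical sorting/peeling argument on the $2M$‑fold expansion of $E|S_n|^{2M}$, where $S_n=\sum_{j=1}^n\eta_j$.

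\emph{Reduction to scalars.} Write $\eta_j=(\eta_j^{(1)},\dots,\eta_j^{(d)})$, $S_n=(S_n^{(1)},\dots,S_n^{(d)})$. By convexity of $u\mapsto u^M$ on $[0,\infty)$, $|S_n|^{2M}=\big(\sum_{p=1}^d (S_n^{(p)})^2\big)^M\le d^{M-1}\sum_{p=1}^d (S_n^{(p)})^{2M}$. Since $|\eta_j^{(p)}|\le|\eta_j|$ pointwise, Jensen for conditional expectations gives $A_{2M}^{(p)}:=\sup_{i\ge1}\sum_{j\ge i}\|E(\eta_j^{(p)}\,|\,\cG_i)\|_{2M}\le A_{2M}$ for each coordinate $p$. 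Thus it suffices to prove the one–dimensional bound $E(S_n^{(p)})^{2M}\le 3(2M)!\,A_{2M}^{\,2M}\,n^M$, the extra $d^{M-1}\cdot d=d^M$ then appearing in front. Note also that taking $i=j$ in the definition of $A_{2M}$ gives $\|\eta_j\|_{2M}=\|E(\eta_j\,|\,\cG_j)\|_{2M}\le A_{2M}$ for every $j$, a fact used repeatedly below.

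\emph{The scalar estimate.} Now $\eta_j$ is scalar. Expand
\[
E S_n^{2M}=\sum_{1\le j_1\le\cdots\le j_{2M}\le n} c(j_1,\dots,j_{2M})\,E\big(\eta_{j_1}\cdots\eta_{j_{2M}}\big),
\]
with multinomial weights $c\le(2M)!$. Estimate each ordered term by peeling off the largest index: conditioning on $\cG_{j_{2M-1}}$ (measurable for all lower factors) replaces $\eta_{j_{2M}}$ by $E(\eta_{j_{2M}}\,|\,\cG_{j_{2M-1}})$; iterating this downwards, pair the sorted indices into $M$ consecutive pairs $(j_{2r-1},j_{2r})$ and use H\"older to split the arising product into the factors $\|\eta_{j_{2r-1}}\|_{2M}$ and $\|E(\eta_{j_{2r}}\,|\,\cG_{j_{2r-1}})\|_{2M}$ (together with the trivial bound $\|\eta_{j}\|_{2M}\le A_{2M}$). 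Summing, each ``upper'' index $j_{2r}$ is absorbed through $\sum_{j_{2r}\ge j_{2r-1}}\|E(\eta_{j_{2r}}\,|\,\cG_{j_{2r-1}})\|_{2M}\le A_{2M}$ — this is the decisive point, as it produces $A_{2M}^{M}$ rather than a power of $n$ — while each ``lower'' index $j_{2r-1}$ contributes only $\sum_{j\le n}\|\eta_j\|_{2M}\le nA_{2M}$, giving $n^MA_{2M}^{M}$. Multiplying by $(2M)!$ and absorbing the slack produced by boundary and index‑coincidence terms in the peeling into a constant yields $E S_n^{2M}\le 3(2M)!\,A_{2M}^{\,2M}\,n^M$. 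This is the route of Lemma~3.2.5 in \cite{HK} and Lemma~3.4 in \cite{Ki20}.

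\emph{A conceptual cross‑check, and the main obstacle.} The same order of magnitude follows more transparently (though with a worse constant) from the telescoping identity $\eta_i=V_i-E(V_{i+1}\,|\,\cG_i)$ with $V_i=\sum_{j\ge i}E(\eta_j\,|\,\cG_i)$, $\|V_i\|_{2M}\le A_{2M}$: this gives $S_n=(V_1-V_{n+1})+\sum_{i=1}^n D_i$ with $D_i:=V_{i+1}-E(V_{i+1}\,|\,\cG_i)$ a martingale‑difference sequence, $\|D_i\|_{2M}\le 2A_{2M}$, to which the classical $L^{2M}$ moment inequality for martingale differences applies, yielding a bound of the form $C(M)A_{2M}^{\,2M}n^M$. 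The genuinely delicate part of the actual proof is therefore not the structure but the \emph{bookkeeping}: one must arrange the nested conditioning (equivalently, the counting of surviving index tuples) so that exactly half of the $2M$ summation indices are consumed by the summability constant $A_{2M}$ while only the other half generate a factor $n$, and then keep the H\"older exponents and the multinomial constants under control tightly enough to land below $3(2M)!\,d^M$. The coordinate reduction, the telescoping identity and the H\"older steps themselves are routine.
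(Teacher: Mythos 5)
First, note that the paper does not actually prove this lemma: it is quoted verbatim from Lemma 3.2.5 of \cite{HK} (scalar case) and Lemma 3.4 of \cite{Ki20} (vector case), so there is no in-paper argument to compare against. Judged on its own terms, your proposal has a genuine gap at precisely the step you yourself call ``the decisive point.'' The per-tuple bound implicit in your pairing scheme,
\[
|E(\eta_{j_1}\cdots\eta_{j_{2M}})|\;\leq\;\prod_{r=1}^{M}\|\eta_{j_{2r-1}}\|_{2M}\,\|E(\eta_{j_{2r}}\,|\,\cG_{j_{2r-1}})\|_{2M},
\]
is not obtainable by ``iterating the conditioning downwards'': after you replace $\eta_{j_{2M}}$ by $E(\eta_{j_{2M}}|\cG_{j_{2M-1}})$, the remaining product still contains the factors $\eta_{j_{2M-1}}E(\eta_{j_{2M}}|\cG_{j_{2M-1}})$, which are not $\cG_{j_{2r-1}}$-measurable for the lower pairs, so you may not substitute $E(\eta_{j_{2r}}|\cG_{j_{2r-1}})$ for $\eta_{j_{2r}}$ inside the expectation at any later stage. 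Worse, the displayed inequality is simply false: take $M=2$, $\cG_1$ trivial, $\eta_1\equiv 1$, $\cG_2=\cG_3=\cG_4=\sig\{\ve\}$ with $\ve$ Rademacher, and $\eta_2=\eta_3=\ve$, $\eta_4=1+\ve$; then the left-hand side equals $1$ while the right-hand side contains the factor $\|E(\eta_2|\cG_1)\|_4=|E\ve|=0$. So the bookkeeping you defer is not bookkeeping at all --- the pairing must be chosen adaptively (depending on the gaps of the tuple), or one must argue by a different decomposition, and this is exactly the content of the cited lemmas.

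Your ``conceptual cross-check,'' on the other hand, is essentially a complete and correct proof of a bound of the form $C(M)d^{M}A_{2M}^{2M}n^{M}$: the identity $\eta_i=V_i-E(V_{i+1}|\cG_i)$ with $V_i=\sum_{j\geq i}E(\eta_j|\cG_i)$ (convergent in $L^{2M}$ since $A_{2M}<\infty$) yields $S_n=V_1-V_{n+1}+\sum_{i=1}^{n}D_i$ with martingale differences $\|D_i\|_{2M}\leq 2A_{2M}$, and Burkholder plus $(\sum_i D_i^2)^M\leq n^{M-1}\sum_i|D_i|^{2M}$ finishes it. Since the paper only ever uses Lemma \ref{lem3.2} through constants that are merely required to be explicit and independent of $n$, this route would serve every application in Sections \ref{dynsec3}--\ref{dynsec5}; but it does not establish the specific constant $3(2M)!\,d^{M}$ asserted in the statement, so as written your proposal proves a (sufficient) variant of the lemma rather than the lemma itself. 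I would therefore either promote the martingale argument to the main proof and restate the conclusion with an unspecified $C(M,d)$, or reproduce the adaptive combinatorial argument from \cite{HK} in full.
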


  In order to obtain uniform moment estimates required by Theorem \ref{thm2.1} we will need the following general estimate which appeared as Lemma 3.7 in \cite{Ki20}.
\begin{lemma}\label{lem3.3} Let $\eta_1,\eta_2,...,\eta_N$ be random $d$-dimensional vectors and
$\cH_1\subset\cH_2\subset...\subset\cH_N$ be a filtration of $\sig$-algebras such that $\eta_m$ is
$\cH_m$-measurable for each $m=1,2,...,N$. Assume also that $E|\eta_m|^q<\infty$ for some $q>1$
and each $m=1,...,N$. Set $S_m=\sum_{j=1}^m\eta_j$. Then
\begin{equation}\label{3.3}
E\max_{1\leq m\leq N}|S_m|^q\leq 2^{q-1}\big((\frac q{q-1})^qE|S_N|^q+E\max_{1\leq m\leq N-1}|\sum^N_{j=m+1}E(\eta_j|\cH_m)|^q\big).
\end{equation}
\end{lemma}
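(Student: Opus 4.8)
The plan is to compare the (generally non-adapted) partial sums $S_m$ with the closed martingale $M_m=E(S_N\mid\cH_m)$, $1\le m\le N$, and then to invoke Doob's $L^q$ maximal inequality for $M$. First I would observe that since $\eta_1,\dots,\eta_m$ are all $\cH_m$-measurable, one has $E(S_m\mid\cH_m)=S_m$, and therefore
\[
M_m=E(S_N\mid\cH_m)=S_m+\sum_{j=m+1}^N E(\eta_j\mid\cH_m),
\qquad\text{i.e.}\qquad
S_m=M_m-\sum_{j=m+1}^N E(\eta_j\mid\cH_m),
\]
where the last sum is empty (hence $0$) when $m=N$. The hypothesis $E|\eta_m|^q<\infty$ for each $m$ gives $E|S_N|^q<\infty$; by conditional Jensen $E|M_m|^q\le E|S_N|^q<\infty$ and each $E(\eta_j\mid\cH_m)$ is integrable, so every quantity that appears below is finite.

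Next I would apply the elementary inequality $(a+b)^q\le 2^{q-1}(a^q+b^q)$ (valid for $a,b\ge 0$, $q\ge 1$) together with the triangle inequality to the above decomposition, obtaining
\[
|S_m|^q\le 2^{q-1}\Big(|M_m|^q+\big|\textstyle\sum_{j=m+1}^N E(\eta_j\mid\cH_m)\big|^q\Big),
\]
and then take $\max_{1\le m\le N}$ and expectations:
\[
E\max_{1\le m\le N}|S_m|^q\le 2^{q-1}\Big(E\max_{1\le m\le N}|M_m|^q+E\max_{1\le m\le N-1}\big|\textstyle\sum_{j=m+1}^N E(\eta_j\mid\cH_m)\big|^q\Big),
\]
where the range of the second maximum shrinks to $m\le N-1$ precisely because the $m=N$ term vanishes.

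Finally, the first term is controlled by Doob's inequality. Since $|\cdot|$ is convex, $|M_m|$ is a nonnegative $\cH_m$-submartingale (indeed $E(|M_n|\mid\cH_m)\ge|E(M_n\mid\cH_m)|=|M_m|$ by conditional Jensen), so Doob's $L^q$ maximal inequality gives $E\max_{1\le m\le N}|M_m|^q\le(\tfrac{q}{q-1})^q E|M_N|^q=(\tfrac{q}{q-1})^q E|S_N|^q$, using $M_N=S_N$. Substituting this bound into the previous display yields exactly \eqref{3.3}.

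The argument is essentially routine; the only point needing a word of care is the passage to a \emph{vector-valued} Doob inequality, which I would handle through the submartingale property of the Euclidean norm of the martingale $M_m$, as indicated above — this is also the step that uses $q>1$ rather than merely $q\ge 1$, so that the Doob constant $q/(q-1)$ is finite. No weak-dependence or mixing input is needed for this lemma: it is a purely martingale-theoretic statement that will later be fed the estimates of the conditional expectations $E(\eta_j\mid\cH_m)$ coming from the $\phi$-mixing and the approximation coefficient $\rho$.
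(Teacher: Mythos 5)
Your proof is correct: the decomposition $S_m=E(S_N\mid\cH_m)-\sum_{j=m+1}^N E(\eta_j\mid\cH_m)$, the elementary inequality $(a+b)^q\le 2^{q-1}(a^q+b^q)$, and Doob's $L^q$ maximal inequality applied to the nonnegative submartingale $|E(S_N\mid\cH_m)|$ yield exactly (\ref{3.3}). The paper itself gives no proof here, citing Lemma 3.7 of \cite{Ki20}, and your argument is precisely the standard one used there, so there is nothing essentially different to compare.
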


\subsection{Limits, transformations and approximations}
The following result deals with the coefficients $c(x)$ and $a_{jk}(x)$ introduced in
Section \ref{dynsec2} and establishes their properties.
\begin{lemma}\label{lem3.4} Under the conditions of Theorem \ref{thm2.1} the limits
 (\ref{2.13}) and (\ref{2.15}) exist and
\begin{equation}\label{3.4}
\hat\vs_{ij}+\hat\vs_{ji}=\vs_{ij}-E(\xi_i(0)\xi_j(0)).
\end{equation}
Moreover, uniformly in $m$ for all $m,n\geq 0$,
 \begin{equation*}
 |n\hat\vs_{ij}-\sum_{k=m}^{m+n}\sum_{l=m-n}^{k-1}\hat\vs_{ij}(k-l)|\leq 2L\sum_{k=0}^n\sum_{l=n+k}^\infty(L\phi([l/3])+\rho([l/3])+L\rho^2([l/3]))
\end{equation*}
and
 \begin{equation*}
 |n\vs_{jk}(x)-\sum_{i=m}^{m+n}\sum_{l=m}^{m+n}\vs_{jk}(l-i)|
 \leq 2L\sum_{i=0}^n\sum_{l=n+i}^\infty(L\phi([l/3])+\rho([l/3])+L\rho^2([l/3])).
\end{equation*}
  Finally, $c(x)$ and $b(x)=Eb(x,\xi(0))$ are once and $a_{jk}(x)$ is twice differentiable for $j,k=1,...,d$ and for all $x\in\bbR^d$,
 \begin{eqnarray*}
 &|b(x)|\leq L,\,|\nabla_xb(x)|\leq L,\,\max(|c(x)|,\,|a_{jk}(x)|)\leq\hat L\\
 &=2L^3\sum_{l=0}^\infty(L\phi(l)+\rho(l)+L\rho^2(l)),\,
 \max\big(|\nabla_xc(x)|,\,|\nabla_xa_{jk}(x)|,\,|\nabla^2_xa_{jk}(x)|\big)
 \leq 8\hat L
\end{eqnarray*}
where $L$ is the same as in (\ref{2.5}) and (\ref{2.6}).
\end{lemma}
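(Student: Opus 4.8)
The plan is to establish everything by reducing to absolutely convergent series estimates via the $\phi$-mixing and approximation coefficients. First I would introduce the truncated processes $\xi^k(m)=E(\xi(m)\,|\,\cF_{m-k,m+k})$, so that $\|\xi(m)-\xi^k(m)\|_\infty\le\rho(k)$ by \eqref{2.3} and $\|\xi^k(m)\|_\infty\le L+\rho(k)\le 2L$. The key quantitative input is that for $|n-m|$ large the covariances $\vs_{ij}(n-m)=E(\xi_i(m)\xi_j(n))$ are small: writing $\ell=|n-m|$, replace each factor $\xi(m),\xi(n)$ by its conditioning onto $\cF_{\cdot}$ over a block of radius $[\ell/3]$, so the two resulting conditional expectations are measurable with respect to $\sig$-algebras separated by at least a gap of order $\ell/3$; then apply Lemma \ref{lem3.1} together with $E\xi(0)=0$ to get, after accounting for the replacement errors,
\[
|\vs_{ij}(\ell)|\le L\phi([\ell/3])+2\rho([\ell/3])+L\rho^2([\ell/3]),
\]
up to a fixed multiplicative constant absorbed into the bound as written in the statement. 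Under \eqref{2.18} this is $O(\ell^{-4})$, hence summable, which gives absolute convergence of $\sum_{m\ge 1}E(\xi_j(m)\xi_l(0))$ and thus existence of $\hat\vs_{jl}$ in \eqref{2.15}; existence of $\vs_{ij}$ in \eqref{2.13} follows because the Cesàro average $\frac1k\sum_{m,n=0}^k\vs_{ij}(n-m)$ reorganizes, by counting how often each lag $\ell$ occurs, into $\sum_{|\ell|\le k}(1-|\ell|/k)\vs_{ij}(\ell)\to\sum_{\ell\in\bbZ}\vs_{ij}(\ell)=E(\xi_i(0)\xi_j(0))+\hat\vs_{ij}+\hat\vs_{ji}$, which is exactly the identity \eqref{3.4}.

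Next I would prove the two uniform (non-asymptotic) rate estimates. For the second one, fix $m$ and write $\sum_{i=m}^{m+n}\sum_{l=m}^{m+n}\vs_{jk}(l-i)=\sum_{|\ell|\le n}(\#\{(i,l):\,l-i=\ell\})\,\vs_{jk}(\ell)$; the count is $n+1-|\ell|$ for $|\ell|\le n$, so comparing with $n\sum_{\ell\in\bbZ}\vs_{jk}(\ell)=n\vs_{jk}$ the discrepancy is bounded by $\sum_{|\ell|\le n}|\ell|\,|\vs_{jk}(\ell)|+n\sum_{|\ell|>n}|\vs_{jk}(\ell)|$; both pieces are dominated by $\sum_{i=0}^n\sum_{l\ge n+i}|\vs_{jk}(l)|$ after a Fubini rearrangement (for fixed $i$, the lags $l\ge n+i$ each pick up their weight), and plugging in the covariance bound above yields the stated right-hand side. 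The first estimate, involving $\hat\vs$, is handled identically: $\sum_{k=m}^{m+n}\sum_{l=m-n}^{k-1}\hat\vs_{ij}(k-l)$ — here $\hat\vs_{ij}(\cdot)$ should be read as the covariance with the one-sided summation convention, i.e. the same $\vs_{ij}(\cdot)$ restricted to positive lags as in \eqref{2.15} — is compared with $n\hat\vs_{ij}$ by the same lag-counting argument, the tails again being controlled by $\sum_{l\ge n+k}(\cdots)$.

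Finally, the regularity and bounds of $b(x)$, $c(x)$, $a_{jk}(x)$: $b(x)=Eb(x,\xi(0))$ inherits $|b(x)|\le L$ and $|\nabla_x b(x)|\le L$ directly from \eqref{2.6} by differentiating under the expectation (justified by the uniform bound on $\nabla_x b$). For $a_{jk}(x)=\sum_{i,l}\Sig_{ji}(x)\vs_{il}\Sig_{lk}(x)$ and $c_i(x)$ in \eqref{2.14}, the scalar factors $\vs_{il}$, $\hat\vs_{jl}$ are constants bounded by $\hat L=2L^3\sum_{l\ge0}(L\phi(l)+\rho(l)+L\rho^2(l))$ (sum the covariance bound over $\ell$, using $|\vs|\le\|\xi(0)\|_\infty^2\le L^2$ for small lags and the decay for large ones), so differentiability of $a_{jk}$ and $c$ is just the product rule applied to $\Sig$, which is $C^2$ by \eqref{2.5}; collecting the at most $d^2$ terms and using $|\Sig|,|\nabla\Sig|,|\nabla^2\Sig|\le L$ gives the constants $\hat L$ and $8\hat L$ as stated. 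I expect the main obstacle to be bookkeeping: getting the constants and the exact form of the tail sums $\sum_{k=0}^n\sum_{l\ge n+k}(L\phi([l/3])+\rho([l/3])+L\rho^2([l/3]))$ to match the statement — in particular tracking the factor-of-$3$ block splitting in Lemma \ref{lem3.1} and the replacement errors from $\xi\mapsto\xi^{[\ell/3]}$ — rather than any conceptual difficulty; everything else is a routine consequence of summability coming from \eqref{2.18}.
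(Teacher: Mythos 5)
Your proposal follows essentially the paper's own route on everything the paper actually proves: the covariance decay bound is obtained, exactly as in the paper, by replacing each factor $\xi_i(m),\xi_j(n)$ with its conditional expectation on $\cF_{\cdot-[\ell/3],\cdot+[\ell/3]}$ and invoking Lemma \ref{lem3.1} together with $E\xi(0)=0$ (giving $|\vs_{ij}(\ell)|\le L^2(2\rho([\ell/3])+2L\phi([\ell/3])+L^2\rho^2([\ell/3]))$); summability under (\ref{2.18}) gives the limits (\ref{2.13}) and (\ref{2.15}); your lag-counting derivation of (\ref{3.4}) is just a reorganization of the paper's splitting of $\sum_{m,n=0}^k E(\xi_i(m)\xi_j(n))$ into two triangular sums, a diagonal, and correction terms that vanish in the Ces\`aro limit; and the boundedness/differentiability of $b$, $c$, $a_{jk}$ is treated at the same (product-rule) level of detail in both.

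The one step that would fail is your claim that both error pieces $\sum_{|\ell|\le n}|\ell|\,|\vs_{jk}(\ell)|$ and $n\sum_{|\ell|>n}|\vs_{jk}(\ell)|$ are ``dominated by $\sum_{i=0}^n\sum_{l\ge n+i}|\vs_{jk}(l)|$ after a Fubini rearrangement''. That Fubini sum equals $\sum_{l\ge n}\min(l-n+1,\,n+1)\,|\vs_{jk}(l)|$ and tends to $0$ as $n\to\infty$, while the small-lag piece $\sum_{|\ell|\le n}|\ell|\,|\vs_{jk}(\ell)|$ converges to the constant $\sum_{\ell}|\ell|\,|\vs_{jk}(\ell)|$, so no such domination is possible; moreover the second piece gives weight $n$ to lags just above $n$, where the Fubini sum gives weight $O(1)$. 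The asymmetry between the two displays matters here: in the first one each inner sum $\sum_{l=m-n}^{k-1}$ already exhausts all positive lags up to $n+(k-m)$, so only tails beyond $n+k$ remain and the stated form does come out (reading $n\hat\vs_{ij}$ as $(n+1)\hat\vs_{ij}$ and not chasing constants), exactly as you indicate; but for the square sum $\sum_{i=m}^{m+n}\sum_{l=m}^{m+n}\vs_{jk}(l-i)$ the discrepancy with $(n+1)\vs_{jk}$ genuinely retains the boundary term $\sum_{|\ell|\le n}|\ell|\,|\vs_{jk}(\ell)|$, and the inequality as printed, with tails starting at $l=n+i$, cannot hold in general. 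This is a defect of the statement itself (the paper's proof is silent on both displays, and the extra $O(1)$ term is harmless where the estimate is used, in (\ref{3.37}), because of the $n^{-1}$ prefactor there), so you should either carry that extra term explicitly or bound the error by a sum of the form $\sum_{i=0}^n\sum_{l\ge i}(\cdots)$, which is what your rearrangement actually yields, rather than assert a domination that is false.
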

\begin{proof}
By (\ref{2.3}), (\ref{2.5}), (\ref{2.6}) and Lemma \ref{lem3.1},
\begin{eqnarray*}
&|E(\xi_j(l)\xi_k(m))|\leq 2L\rho(|m-l|/3)\\
&+|E\big(E(\xi_j(l)|\cF_{l-\frac 13|m-l|,l+\frac 13|m-l|})E(\xi_k(m)|\cF_{m-\frac 13|m-l|,n+\frac 13|m-l|})\big)|\\
&\leq 2L(L\phi(|m-l|/3)+\rho(|m-l|/3)).
\end{eqnarray*}
By (\ref{1.2}), (\ref{2.3}) and (\ref{2.6}),
\[
|E(\xi(m)|\cF_{m-n,m+n})|\leq L\rho(n),
\]
and so by (\ref{2.3}), (\ref{2.5}), (\ref{2.6}) and Lemma \ref{lem3.1},
\[
|\vs_{jk}(m-l)|\leq L^2(2\rho(|m-l|/3)+2L\phi(|m-l|/3)+L^2\rho^2(|m-l|/3))
\]
and
\[
|\hat\vs_{ij}(m-l)|\leq L^2(2\rho(|m-l|/3)+2L\phi(|m-l|/3)+L^2\rho^2(|m-l|/3)).
\]
The existence of the limits (\ref{2.13}) and (\ref{2.15}) follow easily from here.
The bounds on derivatives of $a_{jk},\, b$ and $c$ follow from (\ref{2.5}), (\ref{2.6}),
(\ref{2.13}), (\ref{2.9}), (\ref{2.14}) and (\ref{2.15}).

Finally, we write
\begin{eqnarray*}
&\sum_{m,n=0}^kE(\xi_i(m)\xi_j(n))=\sum_{m=0}^k\sum_{n=-k}^{m-1}E(\xi_i(m)\xi_j(n))\\
&+\sum_{n=0}^k\sum_{m=-k}^{n-1}E(\xi_i(m)\xi_j(n))+\sum_{n=0}^kE(\xi_i(n)\xi_j(n))\\
&-\sum_{m=0}^k\sum_{n=-k}^{-1}E(\xi_i(m)\xi_j(n))-\sum_{n=0}^k\sum_{m=-k}^{-1}E(\xi_i(m)\xi_j(n))
\end{eqnarray*}
and (\ref{3.4}) follows since by (\ref{2.18}) and the estimates above
\[
\lim_{k\to\infty}\frac 1k\sum_{m=0}^k\sum_{n=-k}^{-1}E(\xi_i(m)\xi_j(n))=
\lim_{k\to\infty}\frac 1k\sum_{n=0}^k\sum_{m=-k}^{-1}E(\xi_i(m)\xi_j(n))=0,
\]
completing the proof of this lemma.
\end{proof}

Next, under the assumption of Theorem \ref{thm2.1} we will transform $X_N$ given by
(\ref{2.4}) into a more convenient form to deal with. Let $r:\bbR^d\to\bbR^d,\, r(x)=(r_1(x),...,r_d(x))$ be a map with its Jacobi matrix (differential) given by
\begin{equation}\label{3.5}
Dr(x)=\hat\Sig(x)=\Sig^{-1}(x)
\end{equation}
which exists in view of (\ref{2.17}) (see, for instance, Section 8.10 in \cite{Fle}).
Since $r$ is a local diffeomorphism by (\ref{2.16}), it follows from (\ref{2.5})
and the classical Hadamard theorem (see, for instance, Theorem 5.1.5 in \cite{Ber})
that $r$ is a diffeomorphism.

For $x,\zeta\in\bbR^d$ and $i=1,...,d$ set $q(x,\zeta)=(q_1(x,\zeta),...,q_d(x,\zeta))$ with
\begin{eqnarray}\label{3.6}
&q_i(x,\zeta)=\frac 12(Hr_i(x)\Sig(x)\zeta,\Sig(x)\zeta)=\frac 12\sum_{k,l=1}^d
\frac {\partial ^2r_i(x)}{\partial x_k\partial x_l}(\Sig(x)\zeta)_k(\Sig(x)\zeta)_l\\
&=\frac 12\sum_{k,l,m,n=1}^d\frac {\partial\hat\Sig_{ik}(x)}{\partial x_l}\Sig_{km}(x)\Sig_{ln}(x)\zeta_m\zeta_n\nonumber\\
&=- \frac 12\sum_{k,l,m,n=1}^d\hat\Sig_{ik}(x)\frac {\partial\Sig_{km}(x)}{\partial x_l}\Sig_{ln}(x)\zeta_m\zeta_n\nonumber
\end{eqnarray}
where $H$ denotes the Hessian and we used that
\begin{eqnarray}\label{3.7}
 &\sum_{k=1}^d\big(\frac {\partial\hat\Sig_{ik}(x)}{\partial x_l}\Sig_{km}(x)+
 \hat\Sig_{ik}(x)\frac {\partial\Sig_{km}(x)}{\partial x_l}\big)\\
 &=\frac {\partial}{\partial x_l}(\sum_{k=1}^d\hat\Sig_{ik}(x)\Sig_{km}(x))
 =\frac {\partial}{\partial x_l}(\del_{im})=0.\nonumber
 \end{eqnarray}

 Now introduce $Y_N(t),\, t\in[0,T]$ defined by the recurrence relation
 \begin{eqnarray}\label{3.8}
 &Y_N(\frac {n+1}N)=Y_N(n/N)+N^{-1/2}\xi(n)\\
 &+N^{-1}\big(\Sig^{-1}(r^{-1}(Y_N(n/N))
 b(r^{-1}(Y_N(n/N)),\xi(n))+q(r^{-1}(Y_N(n/N)),\xi(n))\big)\nonumber
 \end{eqnarray}
 with $Y_N(0)=r(X_N(0))$ and $Y_N(t)=Y_N(n/N)$ whenever $n/N\leq t<\frac {n+1}N$.
 \begin{lemma}\label{lem3.5}
 Set $Z_N(t)=r(X_N(t))$. Then
 \begin{equation}\label{3.9}
 \sup_{0\leq t\leq T}|Z_N(t)-Y_N(t)|\leq C_1N^{-1/2}
 \end{equation}
 where $C_1>0$ does not depend on $N\geq 1$.
 \end{lemma}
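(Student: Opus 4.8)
The plan is to estimate the discrepancy $Z_N(t) - Y_N(t)$ step by step along the grid $t = n/N$, comparing the exact increment of $Z_N = r(X_N)$ (obtained via a Taylor expansion of the diffeomorphism $r$) with the prescribed increment of $Y_N$ in \eqref{3.8}, and then summing up the local errors. First I would write $X_N((n+1)/N) - X_N(n/N) = N^{-1/2}\Sig(X_N(n/N))\xi(n) + N^{-1}b(X_N(n/N),\xi(n)) =: \Del_n$, noting that $|\Del_n| \le C N^{-1/2}$ uniformly in $n$ by \eqref{2.5}, \eqref{2.6} and the boundedness of $\xi$ from \eqref{2.5}. Applying Taylor's formula to $r_i$ at $X_N(n/N)$ gives
\[
r_i(X_N(\tfrac{n+1}N)) - r_i(X_N(\tfrac nN)) = (\nabla r_i(X_N(\tfrac nN)), \Del_n) + \tfrac12 (Hr_i(X_N(\tfrac nN))\Del_n, \Del_n) + O(|\Del_n|^3).
\]
Since $Dr = \hat\Sig = \Sig^{-1}$ by \eqref{3.5}, the first-order term equals $(\hat\Sig(X_N(n/N))\Del_n)_i = N^{-1/2}\xi_i(n) + N^{-1}(\Sig^{-1}(X_N(n/N))b(X_N(n/N),\xi(n)))_i$, which is exactly the first-order part of the $Y_N$ increment in \eqref{3.8} (with $X_N(n/N) = r^{-1}(Z_N(n/N))$).

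Next I would examine the second-order term. Writing $\Del_n = N^{-1/2}\Sig(X_N(n/N))\xi(n) + N^{-1}b(\cdots)$, the quadratic form $\tfrac12(Hr_i\,\Del_n,\Del_n)$ decomposes into $N^{-1}\cdot\tfrac12(Hr_i(X_N(n/N))\Sig(X_N(n/N))\xi(n), \Sig(X_N(n/N))\xi(n))$ plus cross terms of size $O(N^{-3/2})$ and a pure-$b$ term of size $O(N^{-2})$. By the definition \eqref{3.6} of $q_i$, the leading $N^{-1}$ piece is precisely $N^{-1}q_i(X_N(n/N),\xi(n))$, which matches the remaining term in \eqref{3.8}. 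The cubic remainder $O(|\Del_n|^3) = O(N^{-3/2})$ (here I use that $r$ has bounded third derivatives, which follows from $\Sig$ being twice differentiable with bounds \eqref{2.5} together with \eqref{2.17} so that $r \in C^3$). Hence the per-step mismatch between the exact increment of $Z_N$ and the prescribed increment of $Y_N$, evaluated \emph{at the same point} $X_N(n/N) = r^{-1}(Z_N(n/N))$, is $O(N^{-3/2})$ uniformly in $n$.

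Then I would close a Gronwall-type recursion. Let $e_n = |Z_N(n/N) - Y_N(n/N)|$. Comparing the $Y_N$ increment built on $Z_N(n/N)$ with the one built on $Y_N(n/N)$ and using that all coefficient functions appearing in \eqref{3.8} — namely $r^{-1}$, $\Sig^{-1}\circ r^{-1}$, $b$, and $q\circ(r^{-1}\times\mathrm{id})$ — are Lipschitz in the slow variable (Lipschitz constants controlled by $L$ via \eqref{2.5}, \eqref{2.6}, \eqref{2.16}, and the boundedness of $\hat\Sig$ and of $\Sig$'s derivatives; note $|\xi(n)| \le L$ bounds the quadratic term's Lipschitz dependence), one gets
\[
e_{n+1} \le e_n(1 + C N^{-1}) + C N^{-3/2},
\]
so that $e_n \le C N^{-3/2}\sum_{k=0}^{n-1}(1+CN^{-1})^k \le CN^{-3/2}\cdot N\cdot e^{CT} = C_1 N^{-1/2}$ for all $n \le [TN]$ (here I also need an a priori bound $\sup_t |Z_N(t)|, \sup_t|Y_N(t)| \le C(T)$ so that all evaluations stay in a region where the Lipschitz bounds are uniform; this follows since the increments are $O(N^{-1/2})$ over $[TN]$ steps and $Y_N(0) = Z_N(0)$). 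Since $Z_N$ and $Y_N$ are constant on $[n/N,(n+1)/N)$, the bound extends to $\sup_{0\le t\le T}$, giving \eqref{3.9}.

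The main obstacle is bookkeeping rather than conceptual: one must verify carefully that the first- and second-order Taylor terms of $r$ reproduce exactly the terms placed by hand in \eqref{3.8}, which is precisely the content of identities \eqref{3.6}–\eqref{3.7} (the symmetry condition \eqref{2.17} is what makes $r$ well defined as a $C^3$ map in the first place, and the identity \eqref{3.7} rewrites $q_i$ in the two equivalent forms used), and that every leftover term — cross terms, the $b$-only quadratic term, the cubic remainder — is genuinely $O(N^{-3/2})$ or smaller \emph{uniformly in $n$}, for which the uniform bounds \eqref{2.5}, \eqref{2.6} and $\|\xi(0)\|_\infty \le L$ are exactly what is needed.
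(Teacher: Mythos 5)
Your proposal is correct and follows essentially the same route as the paper: Taylor expansion of $r$ along the grid, identification of the first- and second-order terms with the $\xi$, $\Sig^{-1}b$ and $q$ terms of \eqref{3.8}, a uniformly bounded $O(N^{-3/2})$ per-step remainder, the Lipschitz bounds \eqref{3.12}, and the discrete Gronwall inequality. The only superfluous step is your a priori bound on $\sup_t|Z_N(t)|,\sup_t|Y_N(t)|$: the Lipschitz and boundedness estimates \eqref{3.12} hold uniformly over all of $\bbR^d$ by \eqref{2.5}, \eqref{2.6} and \eqref{2.16}, so no localization is needed.
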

 \begin{proof} By (\ref{3.4}) and the Taylor formula
 \begin{eqnarray}\label{3.10}
 &Z_N(\frac {n+1}N)-Z_N(n/N)=Dr(X_N(n/N))(X_N(\frac {n+1}N)-X_N(n/N))\\
 &+N^{-1}q(X_N(n/N),\xi(n))+N^{-3/2}\eta_N(X_N(n/N),X_N(\frac {n+1}N),\xi(n))\nonumber\\
 &=N^{-1/2}\xi(n)+N^{-1}\big(\Sig^{-1}(r^{-1}(Z_N(n/N)))b(r^{-1}(Z_N(n/N)),\xi(n))\nonumber\\
 &+q(r^{-1}(Z_N(n/N)),\xi(n))\big)+N^{-3/2}\eta_N(X_N(n/N),X_N(\frac {n+1}N),\xi(n))\nonumber
 \end{eqnarray}
 where $\eta_N$ is uniformly bounded vector function in view of (\ref{2.5}) and (\ref{2.16}), i.e.
 \begin{equation}\label{3.11}
 \sup_{N\geq 1,\, x,y,\zeta\in\bbR^d}|\eta_N(x,y,\zeta)|\leq C_2
 \end{equation}
 for some $C_2>0$ which can be estimated explicitly using (\ref{2.5}) and (\ref{2.16}).

 Next observe that by (\ref{2.5}), (\ref{2.6}), (\ref{2.17}) and (\ref{3.5}) there
 exists a constant $L_2>0$ (which also can be estimated explicitly from the data of
 Section \ref{dynsec2}) such that for all $x,y,\zeta,\eta\in\bbR^d$ with
 $|\zeta|,|\eta|\leq L$,
 \begin{eqnarray}\label{3.12}
 &|\Sig^{-1}(r^{-1}(x))b(r^{-1}(x),\zeta)-\Sig^{-1}(r^{-1}(y))b(r^{-1}(y),\zeta)|
 \leq L_2|x-y|,\\
 &|\Sig^{-1}(r^{-1}(x))b(r^{-1}(x),\zeta)-\Sig^{-1}(r^{-1}(x))b(r^{-1}(x),\eta)|
 \leq L_2|\zeta-\eta|,\nonumber\\
 & |q(r^{-1}(x),\zeta)-q(r^{-1}(y),\zeta)|\leq L_2|x-y|,\,\,
 |q(r^{-1}(x),\zeta)-q(r^{-1}(x),\eta)|\nonumber\\
 &\leq L_2|\zeta-\eta|,\,\,|\Sig^{-1}(r^{-1}(x))b(r^{-1}(x),\zeta)|\leq L_2\,\,\,
 \mbox{and}\,\,\, |q(r^{-1}(x),\zeta)| \leq L_2.\nonumber
 \end{eqnarray}
 This together with (\ref{3.7}) and (\ref{3.9})--(\ref{3.11}) yields
 \begin{equation}\label{3.13}
 |Z_N(n/N)-Y_N(n/N)|\leq 2L_2N^{-1}\sum_{k=1}^{n-1}|Z_N(k/N)-Y_N(k/N)|+C_2N^{-1/2}.
 \end{equation}
 Finally, applying to (\ref{3.13}) the discrete (time) Gronwall inequality (see,
 for instance, \cite{Cla}) we obtain that
 \begin{equation}\label{3.14}
 |Z_N(n/N)-Y_N(n/N)|\leq C_2N^{-1/2}\exp(2L_2nN^{-1})
 \end{equation}
 and (\ref{3.9}) follows.
 \end{proof}

 \begin{lemma}\label{lem3.6}
 The process $\Psi(t)=r(\Xi(t))$ solves the stochastic differential equation
 \begin{equation}\label{3.15}
 d\Psi(t)=\vs^{1/2}dW(t)+\big(\Sig^{-1}(r^{-1}(\Psi(t))b(r^{-1}(\Psi(t)))+q^E(r^{-1}(\Psi(t)))\big)dt
 \end{equation}
 where $q^E(x)=Eq(x,\xi(0))$ and, recall, $b(x)=Eb(x,\xi(0))$.
 \end{lemma}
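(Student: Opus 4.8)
The plan is to apply Itô's formula to the process $\Psi(t)=r(\Xi(t))$ and to check that the resulting stochastic differential equation coincides with (\ref{3.15}). Since $r$ is a $C^2$ diffeomorphism with $Dr(x)=\hat\Sig(x)=\Sig^{-1}(x)$ by (\ref{3.5}), and $\Xi$ solves (\ref{2.11}) with $\sig(x)=\Sig(x)\vs^{1/2}$, the Itô formula applied componentwise gives, for $i=1,\dots,d$,
\begin{equation*}
d\Psi_i(t)=\sum_{k=1}^d\frac{\partial r_i}{\partial x_k}(\Xi(t))\,d\Xi_k(t)+\frac12\sum_{k,l=1}^d\frac{\partial^2 r_i}{\partial x_k\partial x_l}(\Xi(t))\,d\langle\Xi_k,\Xi_l\rangle(t).
\end{equation*}
For the first sum I would substitute $d\Xi(t)=\sig(\Xi(t))\,dW(t)+(b(\Xi(t))+c(\Xi(t)))\,dt$ and use $Dr=\Sig^{-1}$, so that the diffusion part becomes $\Sig^{-1}(\Xi(t))\sig(\Xi(t))\,dW(t)=\Sig^{-1}(\Xi(t))\Sig(\Xi(t))\vs^{1/2}\,dW(t)=\vs^{1/2}\,dW(t)$, which already matches the noise coefficient in (\ref{3.15}). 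The drift contribution from the first sum is $\Sig^{-1}(\Xi(t))(b(\Xi(t))+c(\Xi(t)))\,dt$; writing $\Xi(t)=r^{-1}(\Psi(t))$ this produces the term $\Sig^{-1}(r^{-1}(\Psi(t)))b(r^{-1}(\Psi(t)))\,dt$ plus the extra piece $\Sig^{-1}(r^{-1}(\Psi(t)))c(r^{-1}(\Psi(t)))\,dt$ coming from $c$.

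The second-order term requires computing the quadratic covariation: since $d\langle\Xi_k,\Xi_l\rangle(t)=(\sig\sig^*)_{kl}(\Xi(t))\,dt=A_{kl}(\Xi(t))\,dt=(\Sig(\Xi(t))\vs\Sig^*(\Xi(t)))_{kl}\,dt$, the contribution is
\begin{equation*}
\frac12\sum_{k,l=1}^d\frac{\partial^2 r_i}{\partial x_k\partial x_l}(\Xi(t))\,(\Sig\vs\Sig^*)_{kl}(\Xi(t))\,dt.
\end{equation*}
Expanding $\vs=\vs^{1/2}\vs^{1/2}$ and recognizing $(\Sig(x)\vs^{1/2})$ as $\sig(x)$, this equals $\frac12\sum_{k,l}\frac{\partial^2 r_i}{\partial x_k\partial x_l}(x)(\sig(x)\sig^*(x))_{kl}=\frac12\mathrm{tr}\big(\sig^*(x)(Hr_i)(x)\sig(x)\big)$ at $x=\Xi(t)$, which by the definition (\ref{3.6}) of $q_i$ (with $\zeta$ replaced by an orthonormal basis summed via $\vs$) is exactly $E q_i(x,\xi(0))$ once one uses that $E(\Sig(x)\xi(0))_m(\Sig(x)\xi(0))_n$-type expressions combine according to $\vs$; more precisely, $\frac12(Hr_i(x)\Sig(x)\vs^{1/2}e_p,\Sig(x)\vs^{1/2}e_p)$ summed over $p$ gives $\frac12\sum_{m,n}\frac{\partial^2 r_i}{\partial x_k\partial x_l}\Sig_{km}\vs_{mn}\Sig_{ln}$, which matches (\ref{3.6}) with the covariance $\vs$ in place of $\zeta_m\zeta_n$, i.e.\ it equals $q_i^E(x)=Eq_i(x,\xi(0))$ by linearity of expectation and the fact that $E(\xi_m(0)\xi_n(0))$ need not appear—here the relevant object is the limiting covariance $\vs$ entering the diffusion, so I would be careful to identify $q^E(x)=Eq(x,\xi(0))$ with the trace expression using the \emph{defining relation} $A(x)=\Sig(x)\vs\Sig^*(x)$ rather than the instantaneous covariance of $\xi(0)$.

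The main obstacle, and the point that needs the most care, is verifying that the first-order drift piece $\Sig^{-1}(r^{-1}(\Psi))c(r^{-1}(\Psi))$ together with the Itô second-order term $\frac12\mathrm{tr}(\sig^*(Hr_i)\sig)$ reorganizes precisely into $q_i^E(r^{-1}(\Psi))$; this is where the specific form (\ref{2.14}) of $c_i(x)$ involving $\hat\vs_{jl}$, the relation (\ref{3.4}) $\hat\vs_{ij}+\hat\vs_{ji}=\vs_{ij}-E(\xi_i(0)\xi_j(0))$, and the symmetrized last line of (\ref{3.6}) must all be combined. Concretely, I expect that $\Sig^{-1}c$ contributes the "antisymmetric in $(m,n)$" half built from $\hat\vs$, the Itô term contributes the symmetric half built from $\vs$, and (\ref{3.4}) is exactly the identity that glues them so that their sum is $q^E$ built from $\frac12\vs$ as in (\ref{3.6})—note that the $\frac12 E(\xi_i(0)\xi_j(0))$ discrepancy cancels against the diagonal term in the bookkeeping. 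I would carry out this index computation explicitly, using (\ref{3.7}) to convert between $\partial\hat\Sig/\partial x$ and $\hat\Sig\,(\partial\Sig/\partial x)\,\hat\Sig$ as already done in (\ref{3.6}), and conclude that $d\Psi(t)=\vs^{1/2}dW(t)+(\Sig^{-1}(r^{-1}(\Psi(t)))b(r^{-1}(\Psi(t)))+q^E(r^{-1}(\Psi(t))))\,dt$, which is (\ref{3.15}).
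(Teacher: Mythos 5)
Your skeleton is the same as the paper's: apply It\^o's formula to $\Psi=r(\Xi)$, note that $Dr\,\sig=\Sig^{-1}\Sig\vs^{1/2}=\vs^{1/2}$ gives the noise term, and then show that the remaining drift $Dr\,(b+c)$ plus the second--order It\^o correction equals $\Sig^{-1}b+q^E$, using (\ref{2.17}), (\ref{3.7}) and (\ref{3.4}). However, at the crucial identification step your write-up is internally inconsistent and, as stated, wrong. In your third paragraph you assert that the It\^o correction $\frac 12\sum_{k,l}\frac{\partial^2 r_i}{\partial x_k\partial x_l}(\Sig\vs\Sig^*)_{kl}$ by itself ``equals $q_i^E(x)=Eq_i(x,\xi(0))$'', and you propose to ``identify $q^E$ with the trace expression using the defining relation $A=\Sig\vs\Sig^*$ rather than the instantaneous covariance of $\xi(0)$''. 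That identification is not available: $q^E(x)=Eq(x,\xi(0))$ is defined through (\ref{3.6}) with the lag-zero covariance $E(\xi_m(0)\xi_n(0))$, not through the long-run covariance $\vs=\sum_{k\in\bbZ}E(\xi(0)\xi(k)^*)$, and these differ precisely by the off-diagonal terms encoded in $\hat\vs$. If the It\^o term alone were $q^E$, the extra drift piece $\Sig^{-1}c$ would be left unaccounted for and (\ref{3.15}) would fail. Your fourth paragraph then gestures at the right mechanism but again mis-states the outcome (``their sum is $q^E$ built from $\frac 12\vs$'', ``the $\frac 12E(\xi_i(0)\xi_j(0))$ discrepancy cancels against the diagonal term''), and the index computation itself is deferred rather than carried out.

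The correct bookkeeping, which is exactly what the paper's proof does, is the opposite of what you wrote: the $\vs$-part cancels and the lag-zero part survives. Writing $u=Dr\,c$, one uses (\ref{2.17}) together with (\ref{3.7}) to show that $u_i(x)=\sum\hat\Sig_{ij}\frac{\partial\Sig_{jk}}{\partial x_l}\hat\vs_{km}\Sig_{lm}$ is unchanged if $\hat\vs_{km}$ is replaced by $\hat\vs_{mk}$, hence $\hat\vs_{km}$ may be replaced by $\frac 12(\hat\vs_{km}+\hat\vs_{mk})=\frac 12\big(\vs_{km}-E(\xi_k(0)\xi_m(0))\big)$ by (\ref{3.4}). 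The It\^o correction $\hat c_i(x)=\frac 12\sum\sig_{kl}\sig_{jl}\frac{\partial\hat\Sig_{ik}}{\partial x_j}$, rewritten via (\ref{3.7}) as $-\frac 12\sum\hat\Sig_{ik}\frac{\partial\Sig_{km}}{\partial x_j}\vs_{ml}\Sig_{jl}$ (this is (\ref{3.17})), then cancels exactly the $\frac 12\vs$-half of $u_i$, and what remains is $-\frac 12\sum\hat\Sig_{ij}\frac{\partial\Sig_{jk}}{\partial x_l}E(\xi_k(0)\xi_m(0))\Sig_{lm}$, which by the last line of (\ref{3.6}) is precisely $q_i^E(x)$. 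Until you carry out this computation and correct the claim that $q^E$ can be read off from $A=\Sig\vs\Sig^*$, the proof has a genuine gap at the only nontrivial point of the lemma.
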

 \begin{proof}
 By the It\^ o formula (see, for instance, Section 7.3 in \cite{Ki19}),
 \begin{equation}\label{3.16}
 d\Psi(t)=Dr(\Xi(t))\sig(\Xi(t))dW(t)+\big(Dr(\Xi(t))(b(\Xi(t))+c(\Xi(t)))+\hat c(\Xi(t))\big)dt
 \end{equation}
 where $\hat c(x)=(\hat c_1(x),...,\hat c_d(x))$ and
 \[
 \hat c_i(x)=\frac 12\sum_{k,j,l=1}^d\sig_{kl}(x)\sig_{jl}(x)
 \frac {\partial\hat\Sig_{ik}(x)}{\partial x_j}.
 \]
 By (\ref{2.13}) and (\ref{3.6}),
 \begin{equation}\label{3.17}
 \hat c_i(x)=-\frac 12\sum_{k,j,l=1}^d\hat\Sig_{ik}(x)
 \frac {\partial\Sig_{km}(x)}{\partial x_j}\vs_{ml}\Sig_{jl}(x).
 \end{equation}
 Now set $u(x)=Dr(x)c(x)$. Then by (\ref{2.14}), (\ref{2.15}), (\ref{2.17}) and (\ref{3.7}),
 \begin{eqnarray*}
 &u_i(x)=\sum_{k,j,l,m=1}^d\hat\Sig_{ij}(x)
 \frac {\partial\Sig_{jk}(x)}{\partial x_l}\hat\vs_{km}\Sig_{lm}(x)\\
 &=-\sum_{k,j,l,m=1}^d\frac {\partial\hat\Sig_{ij}(x)}{\partial x_l}\Sig_{jk}(x)\hat\vs_{km}\Sig_{lm}(x)
 =-\sum_{k,j,l,m=1}^d\frac {\partial\hat\Sig_{il}(x)}{\partial x_j}\Sig_{jk}(x)\hat\vs_{km}\Sig_{lm}(x)\\
 &=\sum_{k,j,l,m=1}^d\hat\Sig_{il}(x)\Sig_{jk}(x)\hat\vs_{km}
 \frac {\partial\Sig_{lm}(x)}{\partial x_j}
 =\sum_{k,j,l,m=1}^d\hat\Sig_{ij}(x)
 \frac {\partial\Sig_{jk}(x)}{\partial x_l}\hat\vs_{mk}\Sig_{lm}(x).
 \end{eqnarray*}
 This together with (\ref{3.4}) yields
 \[
 u_i(x)=\sum_{k,j,l,m=1}^d\hat\Sig_{ij}(x)\frac {\partial\Sig_{jk}(x)}
 {\partial x_l}(\frac {\hat\vs_{km}+\hat\vs_{mk})}2)\Sig_{lm}(x)
 \]
 and by (\ref{3.6}) and (\ref{3.17}),
 \[
 u_i(x)+\hat c(x)=Eq^E_i(x).
 \]
 Finally, (\ref{3.15}) follows from here and (\ref{3.16}).
 \end{proof}

The transformation appearing in Lemmas \ref{lem3.5} and \ref{lem3.6} was employed previously
in \cite{GM} though full details were provided there only in the one dimensional case.
Lemmas \ref{lem3.5} and \ref{lem3.6} show that for the proof of Theorem \ref{thm2.1}
it suffices to estimate $E\sup_{0\leq t\leq T}|Y_N(t)-\Psi(t)|^{2M}$ which will yield the
estimate in (\ref{2.20}) in view of (\ref{2.16}) and (\ref{3.5}). This allows
to deal only with the case when $X_N$ is given by (\ref{2.4}) with $\Sig(x)$ being a constant
matrix.
In order to use the $\phi$-dependence coefficient effectively it will be convenient to consider the processes $Y_N^{(m)},\, m\geq 1$ defined for $n=0,1,...,[TN]-1$ by the recurrence relation
\begin{eqnarray}\label{3.18}
&Y^{(m)}_N(\frac {n+1}N)=Y^{(m)}_N(n/N)+N^{-1/2}\xi^{(m)}(n)\\
&+N^{-1}\big(\hat b(r^{-1}(Y_N^{(m)}(n/N)),\,\xi^{(m)}(n))+q(r^{-1}(Y_N^{(m)}(n/N)),
\,\xi^{(m)}(n))\big)\nonumber
\end{eqnarray}
where $Y^{(m)}_N=Y_N(0)$, $\xi^{(m)}(n)=E(\xi(n)|\cF_{n-m,n+m})$, $\hat b(x,\zeta)=
\Sig^{-1}(x)b(x,\zeta)$ and we set $Y_N^{(m)}(t)=Y_N^{(m)}(n/N)$ when $n/N\leq t<
\frac {n+1}N$.
\begin{lemma}\label{lem3.7} For all $m,N\geq 1$,
\begin{equation}\label{3.19}
\max_{0\leq n\leq [TN]}|Y_N(n/N)-Y^{(m)}_N(n/N)|\leq (1+2L_2)N^{1/2}\rho(m)e^{2L_2}.
\end{equation}
\end{lemma}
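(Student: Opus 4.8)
The plan is to compare the two recurrence relations \eqref{3.8} and \eqref{3.18} term by term and then close an estimate by the discrete Gronwall inequality, exactly in the spirit of the proof of Lemma \ref{lem3.5}. Set $D_n=|Y_N(n/N)-Y^{(m)}_N(n/N)|$. Subtracting \eqref{3.18} from \eqref{3.8} and telescoping from $0$ (recall $Y_N(0)=Y^{(m)}_N(0)$), we obtain
\[
Y_N(n/N)-Y^{(m)}_N(n/N)=N^{-1/2}\sum_{k=0}^{n-1}\big(\xi(k)-\xi^{(m)}(k)\big)+N^{-1}\sum_{k=0}^{n-1}\big(R_k-R^{(m)}_k\big),
\]
where $R_k=\hat b(r^{-1}(Y_N(k/N)),\xi(k))+q(r^{-1}(Y_N(k/N)),\xi(k))$ and $R^{(m)}_k$ is the same expression with $Y^{(m)}_N$ and $\xi^{(m)}$ in place of $Y_N$ and $\xi$ (here $\hat b(x,\zeta)=\Sig^{-1}(x)b(x,\zeta)$, and in the constant-$\Sig$ reduction $q$ still obeys the Lipschitz bounds \eqref{3.12} with constant $L_2$).

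For the first (main) sum, the definition \eqref{2.3} of $\rho$ gives the pointwise bound $\|\xi(k)-\xi^{(m)}(k)\|_\infty=\|\xi(k)-E(\xi(k)|\cF_{k-m,k+m})\|_\infty\leq\rho(m)$ for every $k$, hence
\[
N^{-1/2}\Big|\sum_{k=0}^{n-1}\big(\xi(k)-\xi^{(m)}(k)\big)\Big|\leq N^{-1/2}\cdot n\rho(m)\leq N^{1/2}T\rho(m)
\]
uniformly in $n\leq[TN]$; absorbing $T$ into constants (or simply noting $n/N\le T\le$ the implicit normalization, as is done elsewhere in the paper) this is the source of the $N^{1/2}\rho(m)$ factor and the leading constant $1$. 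For the second sum, split each difference $R_k-R^{(m)}_k$ into the part coming from the change of spatial argument and the part coming from replacing $\xi(k)$ by $\xi^{(m)}(k)$: by \eqref{3.12},
\[
|R_k-R^{(m)}_k|\leq 2L_2\,D_k+2L_2\,|\xi(k)-\xi^{(m)}(k)|\leq 2L_2\,D_k+2L_2\rho(m).
\]
Summing and using $N^{-1}\sum_{k=0}^{n-1}2L_2\rho(m)\leq 2L_2 T\rho(m)\leq 2L_2\rho(m)N^{1/2}$ (crudely bounding, since $T$ and $N^{1/2}$ are $\geq$ the relevant scale), we arrive at
\[
D_n\leq 2L_2N^{-1}\sum_{k=0}^{n-1}D_k+(1+2L_2)N^{1/2}\rho(m).
\]

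Finally, applying the discrete Gronwall inequality (as quoted from \cite{Cla} in the proof of Lemma \ref{lem3.5}) to this relation over $0\leq n\leq[TN]$ yields
\[
D_n\leq (1+2L_2)N^{1/2}\rho(m)\exp\!\big(2L_2\,nN^{-1}\big)\leq (1+2L_2)N^{1/2}\rho(m)e^{2L_2},
\]
which is precisely \eqref{3.19}. I expect no genuine obstacle here; the only point requiring a little care is the bookkeeping of the $T$-dependent factors versus the clean constant $(1+2L_2)e^{2L_2}$ in the statement — one must check that with the paper's normalization $T$ is absorbed (or that the intended reading is $t\in[0,T]$ with $T$ fixed and the constant allowed to depend on $T$, matching $C(M,T)$ in \eqref{2.20}), and that the sum $\sum_{k=0}^{n-1}$ versus $\sum_{k=1}^{n-1}$ discrepancy (as in \eqref{3.13}) does not affect the bound. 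The uniform boundedness of $\hat b$ and $q$ and the Lipschitz estimates \eqref{3.12} are exactly what make every term controllable, so the argument is a routine Gronwall closure.
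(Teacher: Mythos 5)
Your argument is correct and is essentially the paper's own proof: the paper likewise subtracts the two recurrences, uses the bounds (\ref{3.12}) to split the difference into the $\xi-\xi^{(m)}$ contribution (bounded by $\rho(m)$ termwise) plus a $2L_2$-Lipschitz term in the spatial argument, and closes with the discrete Gronwall inequality from \cite{Cla}. The $T$-bookkeeping caveat you flag is shared by the paper's own proof (its bound $N^{-1/2}n(1+2N^{-1/2}L_2)\rho(m)$ with $n\leq[TN]$ and the exponent $2L_2nN^{-1}$ carry the same implicit absorption of $T$), so there is no gap relative to the original argument.
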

\begin{proof}
It follows by (\ref{3.12}) that
\begin{eqnarray*}
&|Y_N(n/N)-Y^{(m)}_N(n/N)|\leq N^{-1/2}\sum_{k=1}^{n-1}|\xi(k)-\xi^{(m)}(k)|\\
&+2L_2N^{-1}\sum_{k=1}^{n-1}|Y_N(k/N)-Y^{(m)}_N(k/N)|
+2L_2N^{-1}\sum_{k=1}^{n-1}|\xi(k)-\xi^{(m)}(k)|\\
&\leq N^{-1/2}n(1+2N^{-1/2}L_2)\rho(m)
+2L_2N^{-1}\sum_{k=1}^{n-1}|Y_N(k/N)-Y^{(m)}_N(k/N)|,
\end{eqnarray*}
and so by the discrete Gronwall inequality (see \cite{Cla}) the estimate (\ref{3.19}) follows.
\end{proof}

Next, set $m_N=[N^{(1-\ka)/2}]$ where $1/2<\ka<2/3$ with $0<\io<1$,
$n_k=n_k(N)=3km_N,\, k=0,1,...,[\frac {TN}{3m_N}]$, $Y_{N,k}=Y_N^{(m_N)}
(n_k/N)$, $\fb(x,\xi)=\hat b(r^{-1}x,\xi)+q(r^{-1}x,\xi)$,
\begin{eqnarray*}
&\al_{N,k}=\sum_{l=n_k}^{n_{k+1}-1}\xi^{(m_N)}(l),\,\,
\be_{N,k}(x)=\sum_{l=n_k}^{n_{k+1}-1}\fb(x,\xi^{(m_N)}(l))\\
&\mbox{and}\,\,\be_{N,k}=\be_{N,k}(Y_{N,k-1}).
\end{eqnarray*}
Introduce the process
\[
\check Y_N(n/N)=Y_N^{(m_N)}(0)+\sum_{l=0}^{[n/3m_N]}(N^{-1/2}\al_{N,l}+N^{-1}\be_{N,l}).
\]
\begin{lemma}\label{lem3.9} For all $N\geq n>k\geq 0$ and $T>0$,
\begin{equation}\label{3.24}
|Y_N^{(m_N)}(n/N)-Y^{(m_N)}_N(k/N)-\check Y_N(n/N)+\check Y_N(k/N)|\\
\leq 6(L+6L_2)(1+T)N^{-(\ka-\frac 12)}.
\end{equation}
\end{lemma}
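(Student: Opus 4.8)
The plan is to compare the process $Y_N^{(m_N)}$, which is built from the one-step recurrence (\ref{3.18}), with the "frozen-coefficient block" process $\check Y_N$, in which the slow variable is held fixed at $Y_{N,k-1}=Y_N^{(m_N)}(n_{k-1}/N)$ over each block $[n_k/N,n_{k+1}/N)$ of length $3m_N/N$. The estimate is really an accumulation of two block-level errors: the error made by freezing the argument of $\hat b$ and $q$ inside a single block, and the (cheaper) error coming from the fact that $\check Y_N$ updates the frozen argument only at block endpoints rather than carrying forward the true position. I would first establish an a priori bound on the increments: from (\ref{2.5}), (\ref{2.16}), $\|\xi^{(m_N)}(l)\|_\infty\le L$ (since conditional expectation does not increase the sup-norm) and the bound $|\fb(x,\zeta)|\le L_2$ from (\ref{3.12}), one gets that within a single block of $3m_N$ steps both $Y_N^{(m_N)}$ and $\check Y_N$ move by at most $3m_N(N^{-1/2}L+N^{-1}L_2)\le CN^{-1/2}m_N$, hence — using $m_N=[N^{(1-\ka)/2}]$ — by at most $CN^{-\ka/2}$ within a block.

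Next I would telescope the difference $D_n:=Y_N^{(m_N)}(n/N)-Y^{(m_N)}_N(k/N)-\check Y_N(n/N)+\check Y_N(k/N)$ over the blocks contained in $[k,n]$ plus the two (partial) end blocks. On the interior blocks, the two processes accumulate the same $N^{-1/2}\xi^{(m_N)}(l)$ terms, so these cancel exactly; what remains is $N^{-1}\sum_{l}\big(\fb(r^{-1}Y_N^{(m_N)}(l/N),\xi^{(m_N)}(l))-\fb(r^{-1}Y_{N,j-1},\xi^{(m_N)}(l))\big)$ over each block indexed by $j$. By the Lipschitz bounds in (\ref{3.12}), each summand is bounded by $L_2 N^{-1}|Y_N^{(m_N)}(l/N)-Y_{N,j-1}|$, and the second factor is controlled by the a priori within-block displacement bound $CN^{-\ka/2}$. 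Summing the $\le 3m_N$ terms per block and the $\le [TN/3m_N]+1$ blocks gives a total of order $T\cdot N^{-1}\cdot N^{-\ka/2}\cdot 1$ per step times $N$ steps... more carefully: $[TN/3m_N]$ blocks $\times\, 3m_N$ steps $\times\, L_2N^{-1}\times CN^{-\ka/2}= CL_2T N^{-\ka/2}$. The two incomplete end-blocks contribute at most one extra block's worth, which is even smaller ($O(m_N N^{-1}) = O(N^{-(1+\ka)/2})$, absorbed). Tracking the numerical constants — $L$ from the $\xi$ terms that appear in the end-block remainders, $6L_2$ from the Lipschitz/boundedness constants, the factor $6$ from the block length $3$ and the doubled end blocks, and the factor $(1+T)$ from the number of blocks plus the leftover — yields exactly the stated bound $6(L+6L_2)(1+T)N^{-(\ka-1/2)}$, where the exponent $\ka-1/2$ arises because $N^{-\ka/2}$ combined with... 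I should recheck: the net exponent is $\ka/2$ from one such pass, but iterating the freezing argument (the true displacement within a block is itself controlled only up to the previous block's error, giving a geometric-type series with ratio $O(m_N N^{-1})\to 0$) the dominant term is $m_N/N = N^{-(1+\ka)/2}$ per block times $TN/m_N$ blocks $= TN^{-\ka}$...

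The main obstacle, and the step I would spend the most care on, is getting the exponent right: one must decide whether the frozen argument is updated every $3m_N$ steps (block size) and then the within-block displacement is $O(N^{-1/2}m_N) = O(N^{-\ka/2})$, so the accumulated $\fb$-mismatch over the whole interval is $N^{-1}\cdot N^{-\ka/2}\cdot (\text{number of steps}) = N^{-\ka/2}$ — but the lemma claims $N^{-(\ka-1/2)}$, which is larger, hence the stated bound is certainly implied once one checks $\ka/2 \ge \ka - 1/2$, i.e. $\ka \le 1$, true since $\ka<2/3$. So in fact the clean strategy is: (i) a priori per-block displacement bound via (\ref{2.5}), (\ref{2.16}), (\ref{3.12}); (ii) exact cancellation of the $\xi^{(m_N)}$-sums on full blocks; (iii) Lipschitz estimate (\ref{3.12}) on the $\fb$-mismatch, summed; (iv) crude bound on the $\le 2$ partial end blocks; (v) bookkeeping of constants to land below $6(L+6L_2)(1+T)N^{-(\ka-1/2)}$. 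The discrete Gronwall inequality of \cite{Cla} is not even needed here since, unlike in Lemmas \ref{lem3.5} and \ref{lem3.7}, the $\fb$-mismatch is controlled directly by the a priori displacement bound rather than by the unknown difference $D_n$ itself — this is precisely the point of introducing the frozen-coefficient process $\check Y_N$.
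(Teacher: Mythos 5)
Your argument is essentially the paper's own proof: telescope over the blocks, use the exact cancellation of the $\xi^{(m_N)}$-sums on full blocks, bound the per-block $\fb$-mismatch by the Lipschitz constant from (\ref{3.12}) times the a priori displacement $O(N^{-1/2}m_N)=O(N^{-\kappa/2})$ accumulated since the frozen time $n_{i-1}$, and absorb the partial end blocks, arriving (as you correctly note) at the stronger rate $N^{-\kappa/2}\leq N^{-(\kappa-1/2)}$, while the paper's cruder per-block bound lands directly at $N^{-(\kappa-1/2)}$. The only slip is your claim that the end blocks contribute $O(m_NN^{-1})$: their dominant part comes from the $\xi^{(m_N)}$ terms and is $O(Lm_NN^{-1/2})=O(N^{-\kappa/2})$, exactly the term the paper estimates separately, and this is still dominated by the stated bound, so the proof stands.
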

\begin{proof}
First, we write
\begin{eqnarray*}
&Y_N^{(m_N)}(\frac {n_{i+1}}N)-Y_N^{(m_N)}(\frac {n_{i}}N)
=N^{-1/2}\sum_{l=n_i}^{n_{i+1}-1}\big(\xi^{(m_N)}(l)\\
&+N^{-1/2}\fb(Y_N^{(m_N)}(l/N),\,\xi^{(m_N)}(l))\big)
=N^{-1/2}\al_{N,i}+N^{-1}(\be_{N,i}+R_i^{(N)})\\
\end{eqnarray*}
where relying on (\ref{3.12}) we estimate $|\fb|$ and the Lipschitz constant of $\fb$
by $L_2$ and obtain
\begin{eqnarray*}
&|R^{(N)}_{i}|\leq \sum_{l=n_i}^{n_{i+1}-1}|\fb(Y_N^{(m_N)}(l/N),\,\xi^{(m_N)}(l))
-\fb(Y_{N,i-1},\,\xi^{(m_N)}(l))|\\
&\leq L_2\sum_{l=n_i}^{n_{i+1}-1}|Y_N^{(m_N)}(l/N)-Y_{N,i-1}|\\
&\leq 2N^{-1/2}L_2\sum_{l=n_i}^{n_{i+1}-1}\sum_{j=n_{i-1}}^{l-1}(j-n_{i-1})
\leq 84L_2N^{(1-3\ka/2)}.
\end{eqnarray*}
Now, summing in $i$ from $[\frac k{3m_N}]$ to $[\frac n{3m_N}]$ and taking into
account that
\begin{eqnarray*}
&|Y_N^{(m_N)}(n/N)-Y_N^{(m_N)}([\frac n{m_N}]m_N)|+|Y_N^{(m_N)}(k/N)-
Y_N^{(m_N)}([\frac k{m_N}]m_N)|\\
&\leq 6(L+2L_2)TN^{-\ka/2},
\end{eqnarray*}
we obtain (\ref{3.24}).
\end{proof}

\subsection{Moment and characteristic functions estimates}
 We will need next the following moment estimate.
 \begin{lemma}\label{lem3.11}
 For any $n,M\geq 1$,
 \begin{equation}\label{3.25}
 E|\sum_{k=0}^{n-1}\xi(k)|^{2M}\leq C_3(M)n^M
 \end{equation}
 where $C_3(M)>0$ can be recovered from the proof and it does not depend on $n$.
 \end{lemma}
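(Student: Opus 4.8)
The plan is to bound the $2M$-th moment of the partial sum $S_n=\sum_{k=0}^{n-1}\xi(k)$ by combining the approximation coefficient $\rho$ with the $\phi$-mixing coefficient, reducing matters to Lemma \ref{lem3.2}. The natural filtration to use is $\cG_k=\cF_{-\infty,k}$ (or, to get a genuine filtration indexed forward, $\cG_k=\cF_{0,k}$ after a harmless shift). Since $\xi(k)$ need not be $\cF_{kk}$-measurable, I would first pass to the truncated process $\tilde\xi(k)=E(\xi(k)\,|\,\cF_{k-m,k+m})$ for a suitable fixed $m$; by \eqref{2.3} we have $\|\xi(k)-\tilde\xi(k)\|_\infty\le\rho(m)$, hence $\|\,|S_n-\tilde S_n|\,\|_{2M}\le n\rho(m)$ where $\tilde S_n=\sum_{k=0}^{n-1}\tilde\xi(k)$. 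In fact, the cleanest route avoids choosing a single $m$ and instead estimates $\|E(\xi(j)\,|\,\cG_i)\|_{2M}$ directly.

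The key step is to verify the hypothesis of Lemma \ref{lem3.2}, namely that $A_{2M}=\sup_i\sum_{j\ge i}\|E(\xi(j)\,|\,\cG_i)\|_{2M}$ is finite with a bound independent of $n$. For $j\ge i$ write $n_0=\lfloor (j-i)/3\rfloor$ and split $\xi(j)=E(\xi(j)\,|\,\cF_{j-n_0,j+n_0})+\big(\xi(j)-E(\xi(j)\,|\,\cF_{j-n_0,j+n_0})\big)$. The second term has $L^\infty$-norm at most $\rho(n_0)$, hence $L^{2M}$-norm at most $\rho(n_0)$. For the first term, which is $\cF_{j-n_0,j+n_0}$-measurable and bounded by $L$ (using $\|\xi(0)\|_\infty\le L$ and \eqref{2.3}, so the conditional expectation is bounded by $L$), apply Lemma \ref{lem3.1} with $\cG=\cF_{j-n_0,j+n_0}$, $\cH=\cG_i=\cF_{-\infty,i}$: since $j-n_0-i\ge n_0$, the $\phi$-coefficient between these $\sigma$-algebras is at most $\phi(n_0)$, and because $E\,E(\xi(j)\,|\,\cF_{j-n_0,j+n_0})=E\xi(j)=0$ by \eqref{1.2}, we get $\|E\big(E(\xi(j)\,|\,\cF_{j-n_0,j+n_0})\,\big|\,\cG_i\big)\|_\infty\le 2L\phi(n_0)$. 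Combining, $\|E(\xi(j)\,|\,\cG_i)\|_{2M}\le 2L\phi(\lfloor(j-i)/3\rfloor)+\rho(\lfloor(j-i)/3\rfloor)$. By the hypothesis \eqref{2.18} (in fact any summability of $\phi$ and $\rho$ suffices here), summing over $j\ge i$ gives a finite bound independent of $i$ and $n$, so $A_{2M}\le C(M,L,C_0)<\infty$.

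With $A_{2M}$ controlled, Lemma \ref{lem3.2} yields $E|S_n|^{2M}=E\big|\sum_{k=0}^{n-1}\xi(k)\big|^{2M}\le 3(2M)!\,d^M A_{2M}^{2M}\,n^M$, which is exactly \eqref{3.25} with $C_3(M)=3(2M)!\,d^M A_{2M}^{2M}$. One minor bookkeeping point is that Lemma \ref{lem3.2} is stated for a genuine filtration indexed by $j\ge1$ with $\eta_j$ being $\cG_j$-measurable; since $\xi(j)$ is not $\cF_{0,j}$-measurable, I would apply the lemma after the truncation $\xi(j)\rightsquigarrow E(\xi(j)\,|\,\cF_{j-m,j+m})$ with, say, $m$ growing slowly, or simply use the $\cG_j=\cF_{-\infty,j}$ version (the proof of Lemma \ref{lem3.2} only uses the tower property and the stated bound on $\sum_j\|E(\eta_j|\cG_i)\|_{2M}$, so measurability of $\eta_j$ with respect to $\cG_j$ is not essential once that sum is finite); either way the error from truncation is negligible and absorbed into the constant. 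I expect the only real obstacle to be this measurability technicality in invoking Lemma \ref{lem3.2}; the mixing estimate itself is routine given Lemmas \ref{lem3.1} and \ref{lem3.4}, whose proof already contains essentially the same two-term split.
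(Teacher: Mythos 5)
Your mixing estimate $\|E(\xi(j)\,|\,\cF_{-\infty,i})\|_{2M}\le 2L\phi([(j-i)/3])+\rho([(j-i)/3])$ is fine, but the way you feed it into Lemma \ref{lem3.2} has a genuine gap. The parenthetical claim that ``measurability of $\eta_j$ with respect to $\cG_j$ is not essential'' is false: the proof of Lemma \ref{lem3.2} expands the $2M$-th moment and repeatedly pulls the lower-indexed factors out of a conditional expectation, which requires exactly that adaptedness. A counterexample to the adaptedness-free version: take $\eta_j\equiv\eta$ for all $j$ with $E\eta=0$, $\eta\ne 0$ bounded, and $\cG_j$ the trivial $\sigma$-algebra for every $j$; then $E(\eta_j|\cG_i)=0$, so $A_{2M}=0$, while $E|\sum_{j=1}^n\eta_j|^{2M}=n^{2M}E|\eta|^{2M}$, so (\ref{3.2}) fails badly. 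Since $\xi(j)$ is precisely \emph{not} assumed $\cF_{-\infty,j}$-measurable in this paper (that is the whole point of working with $\rho$), you cannot apply Lemma \ref{lem3.2} to $\eta_j=\xi(j)$, $\cG_j=\cF_{-\infty,j}$.

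Your fallback — replace $\xi(j)$ by $\xi^{(m)}(j)=E(\xi(j)|\cF_{j-m,j+m})$ and use $\cG_j=\cF_{-\infty,j+m}$ — restores adaptedness but does not give the stated bound quantitatively. The truncation error in $L^{2M}$ is $n\rho(m)$, so to keep it of order $\sqrt n$ you must let $m$ grow (with $\rho(m)\le C_0m^{-4}$, $m\sim n^{1/8}$). But for indices with $i<j\le i+2m$ there is no mixing gap between $\cF_{-\infty,i+m}$ and $\cF_{j-m,j+m}$, and only the trivial bound $L$ is available for each of these $\sim m$ terms, so $A_{2M}\sim m$; Lemma \ref{lem3.2} then yields $E|\tilde S_n|^{2M}\lesssim m^{2M}n^M\sim n^{5M/4}$, not $C_3(M)n^M$ with a constant independent of $n$. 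The missing idea is the multiscale decomposition the paper uses: write $\xi_i(j)=\zeta^{(i)}_{j1}+\sum_{r\ge 1}(\zeta^{(i)}_{j2^r}-\zeta^{(i)}_{j2^{r-1}})$ with $\zeta^{(i)}_{jr}=E(\xi_i(j)|\cF_{j-r,j+r})$. At level $r$ the increments are adapted to $\cG^{(r)}_j=\cF_{-\infty,j+2^r}$, are uniformly of size $O(\rho(2^{r-1}))$ (so the $\sim 2^{r+1}$ near-diagonal terms contribute only $O(\rho(2^{r-1})2^r)$, which is summable in $r$ by (\ref{2.18})), and are $\phi$-mixing controlled beyond the gap $2^{r+1}$; Lemma \ref{lem3.2} is applied at each level and the levels are summed by Minkowski. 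This simultaneously resolves the adaptedness issue and keeps the constant independent of $n$, which a single truncation scale cannot do.
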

 \begin{proof}
 First, we write
 \[
 |\sum_{k=0}^{n-1}\xi(k)|^{2M}\leq d^{2M-1}\sum_{i=1}^d
 |\sum_{k=0}^{n-1}\xi_i(k)|^{2M}.
 \]
 Set $\zeta_{mr}^{(i)}=E(\xi_i(m)|\cF_{m-r,m+r})$. Then by the martingale convergence
 theorem for the Doob martingale with probability one
 \begin{equation}\label{3.26}
 \xi_i(m)=\lim_{n\to\infty}\zeta^{(i)}_{n2^n}=\zeta_{m1}^{(i)}+
 \sum_{r=1}^\infty(\zeta_{m2^r}^{(i)}-\zeta_{m2^{r-1}}^{(i)}).
 \end{equation}
 By (\ref{2.3}) and (\ref{2.6}),
 \[
 \|\zeta^{(i)}_{m,2^r}-\xi(m)\|_\infty\leq 2L\rho(2^r),
 \]
 and so
  \[
 \|\zeta^{(i)}_{m,2^r}-\zeta^{(i)}_{m,2^{r-1}}\|_\infty\leq 2L(\rho(2^r)+\rho(2^{r-1}))
 \]
 implying that the series (\ref{3.26}) converges in $L^\infty$.

 Set
 \[
 S_n^{(i)}=\sum_{m=1}^n\xi_i(m),\, S^{(i)}_{n0}=\sum_{m=1}^n\zeta_{m1}^{(i)}\,\,
 \mbox{and}\,\, S^{(i)}_{nr}=\sum_{m=1}^n(\zeta^{(i)}_{m2^r}-\zeta^{(i)}_{m2^{r-1}}).
 \]
 Put $\cG_m^{(r)}=\cF_{-\infty,m+2^r}$ and observe that $\zeta^{(i)}_{m2^r}$ is $\cG_m^{(r)}$-measurable. By (\ref{2.1}) and (\ref{2.2}) for any $m\geq k+2^{r+1} \geq 2^{r+1}$,
 \[
 |E(\zeta^{(i)}_{m2^r}-\zeta^{(i)}_{m2^{r-1}}|\cG_k^{(r)})|\leq 2\phi(m-k-2^{r+1})
 \|\zeta^{(i)}_{m2^{r}}-\zeta^{(i)}_{m2^{r-1}}\|_\infty.
 \]
 For $k\leq m<k+2^{r+1}$ we just use the trivial estimate
 \[
 |E(\zeta^{(i)}_{m2^r}-\zeta^{(i)}_{m2^{r-1}}|\cG_k^{(r)})|\leq
 \|\zeta^{(i)}_{m2^r}-\zeta^{(i)}_{m2^{r-1}}\|_\infty.
 \]
 By (\ref{2.1}) and (\ref{2.2}) we have also that for $m>k+2$,
 \[
 |E(\zeta^{(i)}_m|\cG_k^{(0)})|\leq 2L\phi(m-k-2).
 \]
 Combining the above estimates and taking into account (\ref{2.5}) we obtain that
 \[
 A^{(0)}_{2M}=\sup_{k\geq 1}\sum_{m\geq k}\| E(\zeta^{(i)}_{m1}|\cG_k^{(0)})\|_{2M}\leq2L(1+\sum_{l=0}^\infty\phi(l))
 \]
 and for $r\geq 1$,
 \[
 A^{(r)}_{2M}=\sup_{k\geq 1}\sum_{m\geq k}\|E(\zeta^{(i)}_{m2^r}-\zeta^{(i)}_{m2^{r-1}}|\cG_k^{(r)})\|_{2M}\leq8\rho(2^{r-1})
 (2^r+\sum_{l=0}^\infty\phi(l))
 \]
 where $\|\cdot\|_p$ is the $L^p$-norm.

 Now, applying Lemma \ref{lem3.2} it follows that
 \[
 E(S^{(i)}_{mr})^{2M}\leq 3(2M)!(A^{(r)}_{2M})^{2M}n^M.
 \]
 Hence, by the Minkowski (triangle) inequality
 \[
 \| S^{(i)}_n\|_{2M}\leq\sum_{r=0}^\infty\| S^{(i)}_{nr}\|_{2M}\leq (3(2M)!)^{1/2M}
 \sqrt n\sum_{r=0}^\infty A^{(r)}_{2M}
 \]
 and rising both parts of this inequality to $2M$-th power we obtain (\ref{3.25}).
 \end{proof}

 Next, for each $n\geq 1$ and $x\in\bbR^d$ introduce the characteristic function
 \[
 f_n(x,w)=E\exp(i\langle w,\, n^{-1/2}\sum_{k=0}^{n-1}\xi(k)\rangle),\, w\in\bbR^d
 \]
 where $\langle\cdot,\cdot\rangle$ denotes the inner product. We will need the following
 estimate.
 \begin{lemma}\label{lem3.12}
 For any $n\geq 1$ and $x\in\bbR^d$,
 \begin{equation}\label{3.27}
 |f_n(x,w)-\exp(-\frac 12\langle\vs w,\, w\rangle)|\leq C_4n^{-\wp}
 \end{equation}
 for all $w\in\bbR^d$ with $|w|\leq n^{\wp/2}$ where the matrix $\vs$ is given
 by (\ref{2.13}) and we can take $\wp\leq\frac 1{20}$
 and a constant $C_4>0$ independent of $n$ can be recovered from the proof.
 \end{lemma}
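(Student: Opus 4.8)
The plan is to obtain the Edgeworth-type bound \eqref{3.27} by a telescoping (Lindeberg–Bergström) decomposition of the characteristic function along blocks, using the $\phi$-mixing to decouple successive blocks and the moment bound of Lemma \ref{lem3.11} to control the remainder terms. Fix $n$, write $S_n=\sum_{k=0}^{n-1}\xi(k)$, and split $\{0,\dots,n-1\}$ into consecutive big blocks of length $p=p(n)$ separated by small gap blocks of length $q=q(n)$, with $p$ a small power of $n$ (say $p\sim n^{1-2\wp}$) and $q$ a smaller power chosen so that $\phi(q)$ is negligible — since $\phi(n)\le C_0n^{-4}$ by \eqref{2.18} this is easy. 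Let $U_j$ be the partial sum over the $j$-th big block and $V_j$ over the $j$-th gap; there are $r\sim n/p$ big blocks. First I would discard the gaps: $\|\sum_j V_j\|_{2M}$ is $O(\sqrt{rq})=o(\sqrt n)$ by Lemma \ref{lem3.11} applied blockwise and Minkowski, so $n^{-1/2}\sum_j V_j\to 0$ in $L^2$ at a polynomial rate, and since $|e^{i\langle w,a\rangle}-e^{i\langle w,b\rangle}|\le|w|\,|a-b|$ the contribution of the gaps to $f_n(x,w)$ is $O(|w|\cdot n^{-\wp'})$ for some $\wp'>\wp$, acceptable on the range $|w|\le n^{\wp/2}$.

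The main step is to replace $E\exp(in^{-1/2}\langle w,\sum_j U_j\rangle)$ by $\prod_j E\exp(in^{-1/2}\langle w,U_j\rangle)$ up to a small error, and then to replace each factor by its second-order Taylor expansion. For the first replacement I would telescope: the difference between the true joint characteristic function and the product of marginals is a sum of $r$ terms, each of which, by the second form of Lemma \ref{lem3.1} (with $\cG_1$ the $\sigma$-algebra carrying $U_1,\dots,U_{j-1}$ and $\cG_2$ carrying $U_j$, separated by a gap of length $q$), is bounded by $2\phi(q-2m)\,\|e^{i\langle\cdot\rangle}\|_\infty\le 2\phi(q/2)$ once the $\sigma$-algebra radius is handled — note $\xi^{(m)}$ is not in play here, we work directly with $\xi(k)$, so one must first approximate $\xi(k)$ by its conditional expectation onto $\cF_{k-q/4,k+q/4}$ at cost $\rho(q/4)$ per term, again polynomially small by \eqref{2.18}. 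Thus the decoupling error is $O(r\phi(q/2)+r\,|w|\,\rho(q/4))$, which is $o(n^{-\wp})$ for suitable $p,q$. For each decoupled factor, Taylor's formula with remainder gives
\[
E\exp(in^{-1/2}\langle w,U_j\rangle)=1-\tfrac{1}{2n}\langle \Sigma_j w,w\rangle+O\big(n^{-3/2}|w|^3\,E|U_j|^3\big),
\]
where $\Sigma_j=\mathrm{Cov}(U_j)$; by Lemma \ref{lem3.4} (the second displayed inequality there, giving $|p\,\vs_{jk}-\sum\sum\vs_{jk}(l-i)|$ bounded by a convergent tail) we have $\Sigma_j=p\,\vs+O(1)$ uniformly, and $E|U_j|^3\le (E|U_j|^4)^{3/4}=O(p^{3/2})$ by Lemma \ref{lem3.11}. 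Substituting, each factor equals $1-\tfrac{p}{2n}\langle\vs w,w\rangle+O(n^{-1}|w|^2 p^{-1}\cdot p + n^{-3/2}|w|^3 p^{3/2})$; since $rp\approx n$, taking the product over $j$ and using $\prod(1-a_j)=\exp(-\sum a_j+O(\sum a_j^2))$ gives $\exp(-\tfrac12\langle\vs w,w\rangle)$ plus an error governed by $r\cdot n^{-3/2}|w|^3 p^{3/2}=|w|^3 p^{1/2}n^{-1/2}$ and $r(p/n)^2|w|^4=|w|^4 p/n$. On $|w|\le n^{\wp/2}$ both are $O(n^{-\wp})$ provided $p\le n^{1-5\wp}$ or so, which forces the quantitative choice $\wp\le\frac1{20}$ claimed.

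The hard part will be bookkeeping the two competing constraints on the block lengths: $p$ must be large enough that the per-block covariance $\Sigma_j$ is close to $p\,\vs$ (this needs $p\to\infty$, controlled by the mixing tail in Lemma \ref{lem3.4}) and that $r=n/p$ is small enough that the $r$ accumulated Taylor remainders $\sim |w|^3 p^{1/2} n^{-1/2}$ stay below $n^{-\wp}$; simultaneously $q$ must be large enough that $r\phi(q/2)$ and $r\,|w|\,\rho(q/4)$ are negligible yet small enough that discarding the gaps costs only $o(n^{-\wp})$. All of these are polynomial in $n$ thanks to \eqref{2.18}, so there is slack, but pinning down the admissible exponent range and extracting the explicit $C_4$ and $\wp\le\frac1{20}$ is where the real labor lies; everything else reduces to Lemma \ref{lem3.1}, Lemma \ref{lem3.4}, Lemma \ref{lem3.11}, and Taylor's theorem.
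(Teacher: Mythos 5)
Your proposal is correct and follows essentially the same route as the paper's proof: a block--gap decomposition, discarding the gaps via the moment bound of Lemma \ref{lem3.11}, decoupling the big blocks by combining the $\rho$-approximation (since $\xi(k)$ is not $\cF_{kk}$-measurable) with the $\phi$-mixing via Lemma \ref{lem3.1}, and then a second-order Taylor expansion of each marginal factor with the per-block covariance controlled by Lemma \ref{lem3.4} and third moments by Lemma \ref{lem3.11} (the paper takes blocks of length $n^{3/4}$ and gaps of length $n^{1/4}$, which matches your corrected constraint $p\lesssim n^{1-5\wp}$ at $\wp=\tfrac1{20}$, rather than your initial $p\sim n^{1-2\wp}$). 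No substantive difference in method remains once that block-length bookkeeping is fixed as you indicate.
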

 \begin{proof}
 The left hand side of (\ref{3.27}) does not exceed 2 and for $n<16$ we estimate it by
  $2(16)^\wp n^{-\wp}$ which is at least 2. So, in what follows, we will assume that $n\geq 16$. Since $\xi(k)$ is not supposed to be measurable with respect to $\cF_{kk}$ and we have to rely instead on approximation estimates (\ref{2.18})), it is not possible to reduce (\ref{3.27})
   directly to one of standard results such as Theorem 3.23 in \cite{DP}, and so we
  will provide a proof here which employs the standard block-gap technique.

  Set $\nu(n)=[n[n^{3/4}+n^{1/4}]^{-1}]$, $q_k(n)=k[n^{3/4}+n^{1/4}]$, $r_k(n)=
  q_{k-1}(n)+n^{3/4}$ for $k=1,2,...,\nu(n)$ with $q_0(n)=0$. Next, we introduce for
  $k=1,...,\nu(n)$,
  \begin{eqnarray*}
  &y_k=y_k(n)=\sum_{q_{k-1}(n)\leq l<r_k(n)}\xi(l),\, z_k=z_k(n)=\sum_{r_k(n)\leq
  l<q_k(n)}\xi(l)\\
  &\mbox{and}\,\, z_{\nu(n)+1}=\sum_{q_{\nu(n)-1}\leq l<n}\xi(l).
  \end{eqnarray*}
  Then by Lemma \ref{lem3.11},
   \begin{eqnarray}\label{3.28}
  &E|\sum_{1\leq k\leq\nu(n)+1}z_k|^2\leq 2\nu(n)\sum_{1\leq k\leq\nu(n)}E|z_k|^2+2E|z_{\nu(n)+1}|^2\\
  &\leq 2C_3(1)((\nu(n))^2n^{1/4}+n^{3/4})\leq 4C_3(1)n^{3/4}.\nonumber
  \end{eqnarray}
  This together with  the Cauchy-Schwarz inequality yields,
  \begin{eqnarray}\label{3.29}
  &|f_n(x,w)-E\exp(i\langle w,n^{-1/2}\sum_{1\leq k\leq\nu(n)}y_k\rangle)|\\
  &\leq E|\exp(i\langle w,n^{-1/2}\sum_{1\leq k\leq\nu(n)+1}z_k\rangle)-1|\leq n^{-1/2}E\langle w,\sum_{1\leq k\leq\nu(n)+1}z_k\rangle\nonumber\\
  &\leq n^{-1/2}|w|E|\sum_{1\leq k\leq\nu(n)+1}z_k|\leq 2\sqrt {C_3(1)}|w|n^{-1/8}\nonumber
  \end{eqnarray}
  where we use that for any real $a,b$,
  \[
  |e^{i(a+b)}-e^{ib}|=|e^{ia}-1|\leq |a|.
  \]
   We will obtain (\ref{3.27}) from (\ref{3.29}) by estimating
  \begin{equation}\label{3.30}
  |E\exp(i\sum_{1\leq k\leq\nu(n)}\eta_k)-\exp(-\frac 12\langle\vs w,w\rangle)|\leq I_1+I_2
  \end{equation}
  where
  \begin{eqnarray*}
  &\eta_k=\langle w,n^{-1/2}y_k\rangle,\,\,\,
  I_1=|E\exp(i\sum_{1\leq k\leq\nu(n)}\eta_k)-\prod_{1\leq k\leq\nu(n)}Ee^{i\eta_k}|\\
  &\mbox{and}\,\,\, I_2=|\prod_{1\leq k\leq\nu(n)}Ee^{i\eta_k}-\exp(-\frac 12\langle \vs w,w\rangle)|.
  \end{eqnarray*}

  First, we write
  \begin{eqnarray}\label{3.31}
  &I_1\leq\sum_{m=2}^{\nu(n)}\big(|\prod_{m+1\leq k\leq \nu(n)}Ee^{i\eta_k}|\\
  &\times|E\exp(i\sum_{1\leq k\leq m}\eta_k)- E\exp(i\sum_{1\leq k\leq m-1}
  \eta_k)Ee^{i\eta_m}|\big)\nonumber\\
  &\leq\sum_{m=2}^{\nu(n)}|E\exp(i\sum_{1\leq k\leq m}\eta_k)-E\exp(i\sum_{1\leq k\leq m-1}\eta_k)
  Ee^{i\eta_m}|\nonumber
  \end{eqnarray}
  where $\prod_{\nu(n)+1\leq k\leq \nu(n)}=1$. Next, using the approximation coefficient $\rho$ and
  the inequality $|e^{ia}-e^{ib}|\leq |a-b|$, valid for any real $a$ and $b$, we obtain
  \begin{equation}\label{3.32}
  |e^{i\eta_m}-\exp(iE(\eta_m|\cF_{q_{m-1}(n)-n^{1/4}/3,\infty}))|\leq n^{1/2}|w|\rho(n^{1/4}/3)
  \end{equation}
  and
  \begin{eqnarray}\label{3.33}
  &\big\vert\exp(i\sum_{1\leq k\leq m-1}\eta_k)-\exp(iE(\sum_{1\leq k\leq m-1}
  \eta_k|\cF_{-\infty,r_{m-1}(n)+n^{1/4}/3}))\big\vert\\
  &\leq n^{-1/2}|w|(m-1)\rho(n^{1/4}/3).\nonumber
  \end{eqnarray}
  Hence, by (\ref{3.32}), (\ref{3.33}) and Lemma \ref{lem3.1},
  \begin{eqnarray}\label{3.34}
  &\big\vert E\exp(i\sum_{1\leq k\leq m}\eta_k)-E\exp(i\sum_{1\leq k\leq m-1}
  \eta_k)Ee^{i\eta_m}\big\vert\\
  &\leq\big\vert E\big(\exp(i\sum_{1\leq k\leq m}\eta_k)-\exp\big(iE(\sum_{1\leq k\leq m-1}
  \eta_k|\cF_{-\infty,r_{m-1}(n)+n^{1/4}/3})\nonumber\\
  &+\exp(iE(\eta_m|\cF_{q_{m-1}(n)-n^{1/4}/3,\infty})\big)\big)\big\vert\nonumber\\
  &+\big\vert E\big(\exp\big(iE(\sum_{1\leq k\leq m-1}\eta_k|
  \cF_{-\infty,r_{m-1}(n)+n^{1/4}/3})\nonumber\\
  &+\exp(iE(\eta_m|\cF_{q_{m-1}(n)-n^{1/4}/3,\infty})\big)\big)\nonumber\\
  &-E\exp(iE(\sum_{1\leq k\leq m-1}\eta_k|\cF_{-\infty,r_{m-1}(n)+n^{1/4}/3}))\nonumber\\
  &\times E\exp(iE(\eta_m|\cF_{q_{m-1}(n)-n^{1/4}/3,\infty}))\big\vert\nonumber\\
  &+E\big\vert\exp(iE(\sum_{1\leq k\leq m}\eta_k)-\exp(iE(\sum_{1\leq k\leq m-1}\eta_k|\cF_{-\infty,r_{m-1}(n)+n^{1/4}/3}))\big\vert\nonumber\\
  &+E\big\vert e^{i\eta_m}-E\exp(iE(\eta_m|\cF_{q_{m-1}(n)-n^{1/4}/3,\infty}))\big\vert\nonumber\\
  &\leq\phi(n^{1/4}/3) +4n^{1/4}|w|m\rho(n^{1/4}/3).\nonumber
  \end{eqnarray}
  This together with (\ref{3.31}) yields that
  \begin{equation}\label{3.35}
  I_1\leq n^{1/4}(\phi(n^{1/4}/3)+4n^{1/2}|w|\rho(n^{1/4}/3)).
  \end{equation}

  In order to estimate $I_2$ we observe that
  \[
  |\prod_{1\leq j\leq l}a_j-\prod_{1\leq j\leq l}b_j|\leq\sum_{1\leq j\leq l}|a_j-b_j|
  \]
  whenever $0\leq |a_j|, |b_j|\leq 1,\, j=1,...,l$, and so
  \begin{eqnarray}\label{3.36}
  &I_2\leq\sum_{1\leq k\leq \nu(n)}|Ee^{i\eta_k}-\exp(-\frac 1{2\nu(n)}\langle \vs w,w\rangle)|\\
  &\leq\frac 12\sum_{1\leq k\leq \nu(n)}|E\eta_k^2-\frac 1{\nu(n)}\langle \vs w,w\rangle|\nonumber\\
  &+ \sum_{1\leq k\leq \nu(n)}E|\eta_k|^3+ \frac 1{4\nu(n)}|\langle\vs w,w\rangle|^2\nonumber
 \end{eqnarray}
 where we use (\ref{1.2}) and that for any real $a$,
 \[
 |e^{ia}-1-ia+\frac {a^2}2|\leq |a|^3\,\,\mbox{and}\,\, |e^{-a}-1+a|\leq a^2\,\,\mbox{if}\,\, a\geq 0.
 \]

 Now,
 \begin{eqnarray*}
 &E\eta_k^2=n^{-1}E(\sum_{j=1}^dw_j\sum_{l=q_{k-1}(n)}^{r_k(n)}\xi_j(l))^2\\
 &=n^{-1}\sum_{j,l=1}^d
 w_jw_l\sum_{i=q_{k-1}(n)}^{r_k(n)}\sum_{m=q_{k-1}(n)}^{r_k(n)}\vs_{jl}(m-i).
 \end{eqnarray*}
 Hence, by Lemma \ref{lem3.4},
 \begin{equation}\label{3.37}
 |E\eta_k^2-n^{-1/4}\langle\vs w,w\rangle|\leq 6Ld|w|^2n^{-1}\sum_{l=0}^{n^{3/4}}\sum^\infty_{m=n^{3/4}+l}
 (L\phi(m)+\rho(m)).
 \end{equation}
 By the estimate of $\vs_{jk}(m-l)$ in Lemma \ref{lem3.4} we have also
 \begin{eqnarray}\label{3.38}
 &|(\frac 1{\nu(n)}-n^{-1/4})\langle\vs w,w\rangle|\\
 &\leq 12Ld(L\sum_{l=0}^\infty\phi(l)+\sum_{l=0}^\infty\rho(l))|w|^2([\frac n{n^{3/4}+n^{1/4}}]^{-1}-n^{-1/4}).\nonumber
 \end{eqnarray}
 Since we assume that $n\geq 16$,
 \begin{eqnarray}\label{3.39}
 &[\frac n{n^{3/4}+n^{1/4}}]^{-1}-n^{-1/4}\leq (\frac n{n^{3/4}+n^{1/4}}-1)^{-1}-n^{-1/4}\\
 &=n^{-1/2}\frac {1+n^{-1/4}+n^{-1/2}}{1-n^{-1/4}-n^{-3/4}}\leq 8n^{-1/2}.\nonumber
 \end{eqnarray}
 By Lemma \ref{lem3.11}, H\" older inequality and the stationarity of the process $\xi$,
 \begin{equation}\label{3.40}
 E|\eta_k|^3\leq n^{-3/2}|w|^3\big(E(\sum_{l=q_{k-1}(n)}^{r_k(n)}\xi(l))^4\big)^{3/4}\leq C_3^{3/4}(2)n^{-3/8}|w|^3.
 \end{equation}
 Again, by the estimate of $\vs_{jk}(m-l)$ in Lemma \ref{lem3.4},
 \begin{equation}\label{3.41}
 \frac 1{\nu(n)}\langle\vs w,w\rangle\leq 64Ldn^{-1/4}|w|^2\sum_{l=0}^\infty(L\phi(l)+\rho(l)).
 \end{equation}
 Now, collecting (\ref{3.36})--(\ref{3.41}) we obtain that
 \begin{eqnarray}\label{3.42}
 &I_2\leq 3Ld|w|^2n^{-3/4}\sum_{l=0}^{n^{3/4}}\sum_{m=n^{3/4}+l}^\infty(L\phi(m)+\rho(m))\\
 &+112Ld|w|^2n^{-1/4}\sum_{l=0}^\infty(L\phi(l)+\rho(l))+C_3^{3/4}(2)n^{-1/8}|w|^3.
 \nonumber\end{eqnarray}
 Finally, (\ref{3.28}), (\ref{3.29}), (\ref{3.35}) and (\ref{3.42}) yield (\ref{3.27}) completing the proof.
 \end{proof}

Next, we split each time interval $[n_{k-1},n_k]$ into a block and a gap before
 it in the following way. Set $l_k=l_k(N)=n_{k-1}(N)+3[m^{1/4}_N]$,
 \begin{eqnarray*}
 &Q_{N,k}=\sum_{j=l_k}^{n_k-1}\xi^{(m^{1/4}_N)}(j),\,
 R^{(1)}_{N,k}=\sum_{j=l_{k}}^{n_k-1}(\xi^{(m_N)}(j)
 -\xi^{(m^{1/4}_N)}(j)),\\
 &R^{(2)}_{N,k}=\sum_{j=n_{k-1}}^{l_k-1}\xi^{(m_N)}(j)\,\,
 \mbox{and}\,\, Q_{N,k}(n)=\sum_{k=1}^{k_N(n)}Q_{N,k}
 \end{eqnarray*}
 where $k_N(t)=\max\{ k:\, n_k\leq t\}$. Then
 \begin{equation}\label{3.43}
 |\sum_{0\leq k<k_N(n)}\al_{N,k}-Q_{N,k}(n)|\leq |R^{(1)}_N(n)|+|R^{(2)}_N(n)|
 \end{equation}
 where $\al_{N,k}$ was defined before Lemma \ref{lem3.9} and
 \[
 R^{(i)}_N(n)=|\sum_{1\leq k\leq k_N(n)}R^{(i)}_{N,k}|,\,\, i=1,2.
 \]
 It turns out that the contributions of $R^{(1)}_N$ and $R^{(2)}_N$ are negligible for our purposes as the following lemma shows.

 \begin{lemma}\label{lem3.13}
 For all $N,M\geq 1$,
 \begin{equation}\label{3.44}
 E\max_{0\leq n\leq TN}(R^{(1)}_N(n)+R^{(2)}_N(n))^{2M}\leq C_5(M)N^{M(3+\ka)/4}
 \end{equation}
 where  $C_5(M)>0$ does not depend on $N$ and it can be recovered from the proof.
 \end{lemma}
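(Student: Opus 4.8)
\emph{Plan.} The plan is to estimate the two pieces $R^{(1)}_N$ and $R^{(2)}_N$ separately and then combine them via $(a+b)^{2M}\le 2^{2M-1}(a^{2M}+b^{2M})$. Each $R^{(i)}_N(n)$ depends on $n$ only through $k_N(n)$, so it is piecewise constant in $n$ with jumps only at $n=n_k$; hence it suffices to estimate $E\max_{0\le k\le K}\big|\sum_{j=1}^kR^{(i)}_{N,j}\big|^{2M}$, where $K=[TN/(3m_N)]\lesssim N/m_N$ and $m_N\sim N^{(1-\ka)/2}$. Throughout I will use two elementary bounds: $\|\xi^{(m_N)}(j)\|_\infty\le\|\xi(j)\|_\infty\le L$ (conditional expectation is an $L^\infty$-contraction), and, by (\ref{2.18}), $\|\xi^{(m_N)}(j)-\xi^{(m^{1/4}_N)}(j)\|_\infty\le\rho(m_N)+\rho([m^{1/4}_N])\le C_0(m_N^{-4}+[m^{1/4}_N]^{-4})\le Cm_N^{-1}$, the last step because $[m^{1/4}_N]\ge\tfrac12m^{1/4}_N$ for large $N$. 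Consequently $\|R^{(2)}_{N,k}\|_\infty\le 3L[m^{1/4}_N]\lesssim m^{1/4}_N$, while $\|R^{(1)}_{N,k}\|_\infty\le 3m_N\cdot Cm_N^{-1}=O(1)$; this balancing of scales is precisely why the gap length $3[m^{1/4}_N]$ was chosen. Note also $ER^{(i)}_{N,k}=0$, since $E\xi^{(r)}(j)=E\xi(j)=0$ for every $r$.

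For $R^{(2)}_N$: here $R^{(2)}_{N,k}=\sum_{i=n_{k-1}}^{l_k-1}\xi^{(m_N)}(i)$ is $\cF_{n_{k-1}-m_N,\,\infty}$-measurable, and consecutive such blocks are separated by a full block of length $3m_N-3[m^{1/4}_N]>2m_N$, so after widening each by $m_N$ they stay $\phi$-separated: for $j>k$ the $\phi$-distance between $\cF_{n_{j-1}-m_N,\,\infty}$ and $\cF_{-\infty,\,l_k+m_N}$ is $\ge\tfrac12m_N(j-k)$. I would apply Lemma~\ref{lem3.2} to $\eta_k=R^{(2)}_{N,k}$ with the filtration $\cG_k=\cF_{-\infty,\,l_k+m_N}$: by Lemma~\ref{lem3.1} and (\ref{2.2}), together with $ER^{(2)}_{N,j}=0$ and $\phi(l)\le C_0l^{-4}$, one gets $\|E(\eta_j|\cG_k)\|_\infty\le 2\phi(\tfrac12m_N(j-k))\|\eta_j\|_\infty$ for $j>k$, whence $A_{2M}\lesssim m^{1/4}_N(1+\sum_{l\ge1}\phi(\tfrac12m_Nl))\lesssim m^{1/4}_N$, so that
\[
E\Big|\sum_{k=1}^{K}R^{(2)}_{N,k}\Big|^{2M}\le 3(2M)!\,d^MA_{2M}^{2M}K^M\lesssim m_N^{M/2}(N/m_N)^M=N^Mm_N^{-M/2}\lesssim N^{M(3+\ka)/4},
\]
using $m_N\sim N^{(1-\ka)/2}$ in the last step. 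To pass to $\max_{0\le k\le K}$ I would invoke Lemma~\ref{lem3.3} with the same filtration; the additional term $E\max_m\big|\sum_{j>m}E(\eta_j|\cG_m)\big|^{2M}$ is $\lesssim m_N^{-15M/2}$, hence negligible, and thus $E\max_{0\le n\le TN}(R^{(2)}_N(n))^{2M}\lesssim N^{M(3+\ka)/4}$.

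For $R^{(1)}_N$ the same scheme needs one extra idea. Now $R^{(1)}_{N,k}=\sum_{j=l_k}^{n_k-1}(\xi^{(m_N)}(j)-\xi^{(m^{1/4}_N)}(j))$ is supported on a \emph{long} block of length $\approx 3m_N$, so the separating gaps $3[m^{1/4}_N]$ are far too short to $\phi$-separate the $m_N$-widened blocks. To restore separation I would split the partial sum by parity, $\sum_{j\le k}R^{(1)}_{N,j}=\sum_{j\le k,\,j\ \mathrm{odd}}R^{(1)}_{N,j}+\sum_{j\le k,\,j\ \mathrm{even}}R^{(1)}_{N,j}$, since within each parity class consecutive blocks are separated by $\ge m_N$. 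For the odd part, set $\eta_i=R^{(1)}_{N,2i-1}$ and $\cG_i=\cF_{-\infty,\,n_{2i-1}+m_N}$; then $\|\eta_i\|_\infty=O(1)$, $E\eta_i=0$, and $\|E(\eta_j|\cG_i)\|_\infty\le 2\phi((j-i)m_N)\|\eta_j\|_\infty$ for $j>i$, so $A_{2M}=O(1)$, and Lemma~\ref{lem3.2} gives $E|\sum_i\eta_i|^{2M}\lesssim(N/m_N)^M\sim N^{M(1+\ka)/2}$; the even part is handled identically. Minkowski's inequality and Lemma~\ref{lem3.3} (its conditional-expectation tail term being $O(m_N^{-8M})$, negligible) then give $E\max_{0\le n\le TN}(R^{(1)}_N(n))^{2M}\lesssim N^{M(1+\ka)/2}$. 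Since $\ka<1$, we have $(1+\ka)/2\le(3+\ka)/4$, so this is dominated by the $R^{(2)}$ bound, and combining the two via $(R^{(1)}_N+R^{(2)}_N)^{2M}\le 2^{2M-1}((R^{(1)}_N)^{2M}+(R^{(2)}_N)^{2M})$ yields (\ref{3.44}).

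The only genuine work is the separation bookkeeping: recognizing that the long $R^{(1)}$-blocks have to be decomposed by parity in order to become $\phi$-separated, whereas the short $R^{(2)}$-blocks are automatically separated by the intervening long blocks, and then tracking how the scales $m_N\sim N^{(1-\ka)/2}$ and $\rho([m^{1/4}_N])\lesssim m_N^{-1}$ combine to produce exactly the exponent $(3+\ka)/4$. Everything else is a routine application of Lemmas~\ref{lem3.1}--\ref{lem3.3} and the decay $\phi(l)\le C_0l^{-4}$.
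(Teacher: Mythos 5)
Your proof is correct and follows essentially the same route as the paper: block/gap measurability bookkeeping, Lemma \ref{lem3.1} to bound conditional expectations via $\phi$ together with the two scale bounds $\|R^{(1)}_{N,k}\|_\infty\lesssim m_N\rho([m_N^{1/4}])=O(1)$ and $\|R^{(2)}_{N,k}\|_\infty\lesssim m_N^{1/4}$, and then Lemmas \ref{lem3.2} and \ref{lem3.3} with $K\lesssim N/m_N$ blocks to get the exponent $M(3+\ka)/4$. The only deviation is your parity splitting of the $R^{(1)}$-blocks, which the paper avoids by taking $\cG_k=\cF_{-\infty,n_k}$ and estimating the finitely many near-diagonal terms $E(\eta^{(1)}_k|\cG_l)$, $|k-l|\leq 1$, trivially by $\|\eta^{(1)}_k\|_\infty=O(1)$ (which Lemma \ref{lem3.2} tolerates), so the split is harmless but unnecessary.
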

 \begin{proof}
 Set $\eta^{(i)}_k=R^{(i)}_{N,k},\, i=1,2$ and $\cG_k=\cF_{-\infty,n_k}$.
 Without loss of generality assume that $N\geq 3^{\frac 8{3(1-\ka)}}$, and so $3m_N^{1/4}\leq m_N$.
 Then $\eta^{(1)}_k$ is $\cG_{k+1}$-measurable and $\eta^{(2)}_k$ is $\cG_{k}$-measurable.
 By (\ref{2.3}), (\ref{2.5}), (\ref{2.6}), (\ref{2.18}) and by Lemma \ref{lem3.1} we conclude that
 for $k\geq l+2$,
 \begin{eqnarray*}
 &|E(\eta^{(1)}_k|\cG_l)|\leq 2\phi(n_{k-1}-n_{k-2}-2m_N)\|\eta_k^{(1)}\|_\infty\\
 &\leq 12Lm_N\rho(m_N^{1/4})\phi(m_N)\leq 12LC_0^2N^{-5(1-\ka)/2}))
 \end{eqnarray*}
 and
 \begin{equation*}
 |E(\eta^{(2)}_k|\cG_l)|\leq 6Lm_N^{1/4}\phi(m_N)\leq 6LC_0N^{-17(1-\ka)/8}.
 \end{equation*}
 When $k=l-1,\, k=l$ or $k=l+1$ we will just use the trivial estimates
 \[
 |E(\eta^{(1)}_k|\cG_l)|\leq 6Lm_N\rho(m_N^{1/4})=6LC_0\,\,\mbox{and}\,\,|E(\eta^{(2)}_k|\cG_l)|\leq 3Lm^{1/4}_N
 =3LN^{(1-\ka)/8}.
 \]
 Hence,
 \begin{eqnarray*}
 &A^{(1)}_{2M}=\max_{1\leq l\leq k_N(TN)}\sum_{l\leq k\leq k_N(TN)}\| E(\eta^{(1)}_k|\cG_l)\|_{2M}\\
 &\leq 6LC_0(2C_0N^{-2+3\ka}+3)
 \end{eqnarray*}
 and
 \begin{eqnarray*}
 &A^{(2)}_{2M}=\max_{1\leq l\leq k_N(TN)}\sum_{l\leq k\leq k_N(TN)}\| E(\eta^{(2)}_k|\cG_l)\|_{2M}\\
 &\leq 6L(C_0N^{-(5-9\ka)/4}+N^{(1-\ka)/8}).
 \end{eqnarray*}
 By Lemmas \ref{lem3.2} and \ref{lem3.3} we obtain (\ref{3.44}) from here (cf. Lemma 4.1 in \cite{Ki19}).
 \end{proof}

 Next, we will need the following corollary of Lemma \ref{lem3.12}.
 \begin{lemma}\label{lem3.14}
 For any $N\geq 1$ and $k\leq TN^{(1+\ka)/2}$, with probability one,
 \begin{eqnarray}\label{3.45}
 &\quad\,\,\, |E\big(\exp(i\langle w,\,(n_k-l_k)^{-1/2}Q_{N,k}\rangle)|\cF_{-\infty,n_{k-1}+m^{1/4}_N}\big)
 -g(w)|\\
 &\leq C_4(n_k-l_k)^{-\wp}+C_0(LN^{-2(1-\ka)}+2N^{-(1-\ka)/2})\leq C_6(n_k-l_k)^{-\wp}\nonumber
 \end{eqnarray}
 for all $w\in\bbR^d$ with $|w|\leq(n_k-l_k)^{\wp/2}$ where $g_x(w)=\exp(-
 \frac 12\langle\vs w,w\rangle)$, $C_6=C_4+2C_0(1+3L)$ and $\wp=\frac 1{20}$.
 \end{lemma}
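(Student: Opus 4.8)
The plan is to deduce (\ref{3.45}) from the characteristic-function estimate of Lemma \ref{lem3.12} through two reductions: first replacing the conditional expectation by an unconditional one (using $\phi$-mixing via Lemma \ref{lem3.1}), then replacing the truncated summands $\xi^{(m_N^{1/4})}$ by the untruncated $\xi$ (using the approximation coefficient $\rho$). As in Lemma \ref{lem3.13} I would assume $N$ large enough that $3m_N^{1/4}\le m_N$ (and $m_N\ge 16$), so that $n_k-l_k=3m_N-3[m_N^{1/4}]$ is of order $m_N$; the excluded small values of $N$ only inflate $C_6$.

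\emph{First reduction.} Each summand $\xi^{(m_N^{1/4})}(j)=E(\xi(j)\mid\cF_{j-m_N^{1/4},\,j+m_N^{1/4}})$ with $l_k\le j<n_k$ is $\cF_{j-m_N^{1/4},\,j+m_N^{1/4}}$-measurable, and $j\ge l_k=n_{k-1}+3[m_N^{1/4}]$ forces $j-m_N^{1/4}\ge n_{k-1}+2m_N^{1/4}$; hence $Q_{N,k}$ is $\cF_{n_{k-1}+2m_N^{1/4},\,\infty}$-measurable, which is separated from the conditioning $\sigma$-algebra $\cF_{-\infty,\,n_{k-1}+m_N^{1/4}}$ by a gap of order $m_N^{1/4}$. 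Applying Lemma \ref{lem3.1} (to the real and imaginary parts) with $H(x_1,x_2)=\exp(i\langle w,(n_k-l_k)^{-1/2}x_2\rangle)$, so $\|H\|_\infty=1$, together with (\ref{2.2}) and (\ref{2.18}), gives
\[
\Bigl|E\bigl(\exp(i\langle w,(n_k-l_k)^{-1/2}Q_{N,k}\rangle)\,\big|\,\cF_{-\infty,\,n_{k-1}+m_N^{1/4}}\bigr)-E\exp(i\langle w,(n_k-l_k)^{-1/2}Q_{N,k}\rangle)\Bigr|\le 2\phi(m_N^{1/4}),
\]
with $\phi(m_N^{1/4})\le C_0 m_N^{-1}$, of order $N^{-(1-\ka)/2}$.

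\emph{Second reduction and conclusion.} Using $|e^{ia}-e^{ib}|\le|a-b|$, the bound $\|\xi^{(m_N^{1/4})}(j)-\xi(j)\|_\infty\le\rho(m_N^{1/4})$ from (\ref{2.3}), and $|w|\le(n_k-l_k)^{\wp/2}$, one gets
\[
\Bigl|E\exp(i\langle w,(n_k-l_k)^{-1/2}Q_{N,k}\rangle)-E\exp\bigl(i\langle w,(n_k-l_k)^{-1/2}\textstyle\sum_{j=l_k}^{n_k-1}\xi(j)\rangle\bigr)\Bigr|\le (n_k-l_k)^{(1+\wp)/2}\rho(m_N^{1/4}),
\]
again a negligible power of $N$ (for a cleaner rate one may instead bound $E|\sum_{j=l_k}^{n_k-1}(\xi^{(m_N^{1/4})}(j)-\xi(j))|$ by $O(\sqrt{n_k-l_k}\,\rho(m_N^{1/4}))$ via Lemma \ref{lem3.2}). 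By stationarity of $\xi$ the remaining expectation equals $f_{n_k-l_k}(x,w)$ in the notation of Lemma \ref{lem3.12}, which --- since $\wp=\tfrac1{20}$ and $|w|\le(n_k-l_k)^{\wp/2}$ --- lies within $C_4(n_k-l_k)^{-\wp}$ of $g(w)$. Adding the three contributions gives the first inequality in (\ref{3.45}); the second follows since $n_k-l_k\le 3m_N\le 3N^{(1-\ka)/2}$ renders each of the two lower-order terms a bounded multiple of $(n_k-l_k)^{-\wp}$, and $L\ge1$ lets these be absorbed into $C_6=C_4+2C_0(1+3L)$.

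The one place needing care is the exponent bookkeeping: one must check that, with $m_N=[N^{(1-\ka)/2}]$, $\tfrac12<\ka<\tfrac23$ and $\wp=\tfrac1{20}$, the scale $m_N^{1/4}$ is small enough compared with the block length $n_k-l_k\asymp m_N$ (so that $(n_k-l_k)^{-\wp}\asymp N^{-(1-\ka)/40}$ remains the dominant term) yet large enough that $\phi(m_N^{1/4})$ and $\rho(m_N^{1/4})$ are $o\bigl((n_k-l_k)^{-\wp}\bigr)$ --- which is exactly what the polynomial decay $\phi(n),\rho(n)\le C_0n^{-4}$ of (\ref{2.18}) provides. No new idea beyond Lemmas \ref{lem3.1}, \ref{lem3.12} (and, for the sharper rate, \ref{lem3.2}) is required; the constraint $k\le TN^{(1+\ka)/2}$ merely says $n_k\lesssim TN$ and plays no role in the estimate.
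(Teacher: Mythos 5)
Your proposal is correct and follows essentially the same route as the paper: remove the conditioning via Lemma \ref{lem3.1} at a cost $2\phi(m_N^{1/4})$, pass from the truncated summands $\xi^{(m_N^{1/4})}(j)$ to $\xi(j)$ using $|e^{ia}-e^{ib}|\leq|a-b|$ and the coefficient $\rho$, and then invoke Lemma \ref{lem3.12} for $|w|\leq(n_k-l_k)^{\wp/2}$. The only difference is bookkeeping of the truncation term (you charge $(n_k-l_k)^{(1+\wp)/2}\rho(m_N^{1/4})$ where the paper records $Lm_N^{1/2}\rho(m_N)$), which affects only the negligible lower-order contribution and the precise value of $C_6$, not the bound $C_6(n_k-l_k)^{-\wp}$.
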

 \begin{proof}
 Set $F(w)=E\exp(i\langle w,\,(n_k-l_k)^{-1/2}Q_{N,k}\rangle)$.
 Then by Lemma \ref{lem3.1},
 \begin{eqnarray*}
 &|E\big(\exp(i\langle w,\,(n_k-l_k)^{-1/2}Q_{N,k}\rangle)|
 \cF_{-\infty,n_{k-1}+m^{1/4}_N}\big)-F(w)|\\
 &\leq 2\phi(m^{1/4}_N)\leq 2\phi(N^{(1-\ka)/8}).\nonumber
 \end{eqnarray*}
 Since $|e^{i(a+b)}-e^{ib}|\leq |a|$ we obtain from (\ref{2.3}) and (\ref{2.6}) that for all $x\in\bbR^d$,
 \[
 |F(w)-f_{n_k-l_k}(w)|\leq Lm_N^{1/2}\rho(m_N),
 \]
  where $f_n(w)$ is the same as in Lemma \ref{lem3.12},
 and (\ref{3.45}) follows from (\ref{2.18}) and Lemma \ref{lem3.12}.
 \end{proof}

\section{Strong approximations  }\label{dynsec4}\setcounter{equation}{0}
Our strong approximations will be based on the following result which appears as
 Theorem 4.6 of \cite{DP}. As usual, we will denote by $\sig\{\cdot\}$ a $\sig$-algebra generated by
random variables or vectors appearing inside the braces and we write $\cG\vee\cH$ for the minimal
$\sig$-algebra containing both $\sig$-algebras $\cG$ and $\cH$.
\begin{theorem}\label{thm4.1}
Let $\{ V_m,\, m\geq 1\}$ be a sequence of random vectors with values in $\bbR^d$ defined on some
probability space $(\Om,\cF,P)$ and such that $V_m$ is measurable with respect to $\cG_m$, $m=1,2,...$
where $\cG_m,\, m\geq 1$ is a filtration of sub-$\sig$-algebras of $\cF$.
Assume that the probability space is rich enough
so that there exists on it a sequence of uniformly distributed on $[0,1]$ independent random variables $U_m,\, m\geq 1$ independent of $\vee_{m\geq 0}\cG_m$. For each $m\geq 1$, let $G_m$
 be a probability distribution on $\bbR^d$ with the characteristic function
 \[
 g_m(w)=\int_{\bbR^d}\exp(i\langle w,x\rangle)G_m(dx),\,\, w\in\bbR^d.
 \]
 Suppose that for some non-negative numbers $\nu_m,\del_m$ and $K_m\geq 10^8d$,
 \begin{equation}\label{4.1}
 E\big\vert E(\exp(i\langle w,V_m\rangle)|\cG_{m-1})-g_m(w)\big\vert \leq\nu_m
 \end{equation}
 for all $w$ with $|w|\leq K_m$, and that
 \begin{equation}\label{4.2}
 G_m(\{ x:\, |x|\geq\frac 12K_m\})<\del_m.
 \end{equation}
 Then there exists a sequence $\{ W_m,\, m\geq 1\}$ of $\bbR^d$-valued independent random
 vectors defined on $(\Om,\cF,P)$ such that $W_m$ is $\sig\{ V_m,U_m\}$-measurable, $W_m$ is
 independent of $\sig\{ U_1,...,U_{m-1}\}\vee\cG_{m-1}$ (and so also of $W_1,...,W_{m-1})$ and
 \begin{equation}\label{4.3}
 P\{ |V_m-W_m|\geq\vr_m\}\leq\vr_m
 \end{equation}
 where $\vr_m=16K^{-1}_m\log K_m+2\nu_m^{1/2}K_m^d+2\del_m^{1/2}$.
 In particular, the Prokhorov distance between the distributions $\cL(V_m)$ and $\cL(W_m)$
 of $V_m$ and $W_m$, respectively, does not exceed $\vr_m$.
 \end{theorem}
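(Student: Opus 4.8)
The plan is to reduce the statement to the classical conditional quantile transport (KMT-type) construction and to the normal approximation quantified in Lemma~\ref{lem3.12}, proceeding blockwise along the filtration $\{\cG_m\}$. First I would fix $m$ and condition on $\cG_{m-1}$: the hypothesis \eqref{4.1} says that the conditional characteristic function of $V_m$ is uniformly within $\nu_m$ of $g_m(w)$ on the ball $|w|\leq K_m$, and \eqref{4.2} controls the tail of the target law $G_m$. The standard smoothing inequality (Esseen/Berry--Esseen on a ball of radius $K_m$, applied conditionally) then converts this characteristic-function closeness into a bound of order $\nu_m K_m^d + K_m^{-1}\log K_m + \del_m$ on the Kolmogorov (or Prokhorov) distance between the conditional law $\cL(V_m\mid\cG_{m-1})$ and $G_m$, uniformly. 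This is the analytic heart borrowed from \cite{DP}.

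Next I would realize the coupling. On the enriched space we have the i.i.d.\ uniforms $U_m$ independent of $\vee_m\cG_m$. Using $U_m$ together with $V_m$, I construct $W_m$ by a conditional quantile transport: given $\cG_{m-1}$, push $V_m$ through the conditional distribution function of $V_m$ to obtain a (conditionally) uniform variable, then push that through the quantile function of $G_m$ — with $U_m$ used only to break ties / randomize on atoms so that the resulting $W_m$ has law exactly $G_m$ and, crucially, is \emph{independent} of $\cG_{m-1}\vee\sig\{U_1,\dots,U_{m-1}\}$. (This last independence is what lets one conclude inductively that the $W_m$ are mutually independent, since $W_m$ is $\sig\{V_m,U_m\}$-measurable and $V_m$ is $\cG_m$-measurable.) The Prokhorov/Kolmogorov bound from the first step then yields, for the transported pair, $P\{|V_m-W_m|\geq\vr_m\mid\cG_{m-1}\}\leq\vr_m$ with the stated $\vr_m=16K_m^{-1}\log K_m+2\nu_m^{1/2}K_m^d+2\del_m^{1/2}$; taking expectations gives \eqref{4.3}, and the final sentence about the Prokhorov distance is then immediate from \eqref{4.3} and the definition of that metric.

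The main obstacle — and the point requiring care — is the \emph{simultaneous} requirement that $W_m$ have exactly the prescribed law $G_m$ \emph{and} be independent of the past $\sig\{U_1,\dots,U_{m-1}\}\vee\cG_{m-1}$ while still being close to $V_m$ in the sense \eqref{4.3}. A naive quantile coupling conditioned on $\cG_{m-1}$ would make $W_m$ depend on $\cG_{m-1}$; the resolution is that $G_m$ is a \emph{fixed, non-random} target, so one first couples conditionally (to get closeness) and then checks that the resulting variable, having a deterministic marginal $G_m$ and being built only from $V_m$ and the fresh randomizer $U_m$, is automatically independent of the past — this requires a short measure-theoretic argument that the conditional law of $W_m$ given $\cG_{m-1}\vee\sig\{U_1,\dots,U_{m-1}\}$ is $G_m$ regardless of the conditioning. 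The quantitative smoothing step (turning the $L^1$-in-$\om$ characteristic-function bound \eqref{4.1} into a distance bound, in particular handling the $E|\cdot|$ rather than a pathwise bound and the dimension-dependent factor $K_m^d$) is the other technical ingredient, but it is exactly Theorem~4.6 of \cite{DP}, so I would invoke that statement directly rather than reprove it.
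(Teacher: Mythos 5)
This statement is not proved in the paper at all: Theorem \ref{thm4.1} is quoted verbatim as Theorem 4.6 of \cite{DP}, so your closing move of ``invoking that statement directly'' is exactly what the paper does, and to that extent your proposal and the paper coincide. Note, however, a slight circularity in how you frame it: you present the quantitative smoothing step as the piece to be borrowed from \cite{DP}, but the \emph{entire} theorem (coupling, measurability, independence, and the explicit $\vr_m=16K_m^{-1}\log K_m+2\nu_m^{1/2}K_m^d+2\del_m^{1/2}$) is the cited result, so there is nothing left of your sketch that is actually doing independent work once the citation is made.

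If your sketch were intended as a self-contained proof, it has a genuine gap: the ``conditional quantile transport'' (push $V_m$ through its conditional distribution function, then through the quantile function of $G_m$) is a one-dimensional device and has no direct analogue for $\bbR^d$-valued vectors with $d\geq 2$ --- this is precisely the obstruction, recalled in the paper's introduction with reference to \cite{MP2}, that makes Skorokhod-embedding-type arguments fail in the multidimensional case and forced the different method of \cite{BP}. The actual argument behind Theorem 4.6 of \cite{DP} proceeds by discretizing $V_m$, comparing conditional characteristic functions on the ball $|w|\leq K_m$ to control the Prokhorov distance between the conditional law of $V_m$ and $G_m$, and then realizing a Strassen--Dudley-type coupling measurably with the auxiliary uniform $U_m$ (cf.\ Lemma A1 of \cite{BP} and \cite{MP1}); your observation that $W_m$ is independent of $\cG_{m-1}\vee\sig\{U_1,\dots,U_{m-1}\}$ because its conditional law given the past is the fixed measure $G_m$ is the right idea, but it is delivered by that construction, not by a quantile map. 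Finally, your appeal to Lemma \ref{lem3.12} is misplaced here: that lemma concerns the specific process $\xi$ and is used later only to verify hypothesis (\ref{4.1}) (via Lemma \ref{lem3.14}) when Theorem \ref{thm4.1} is applied, not to prove the general theorem itself.
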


 In order to apply this theorem we set
 $V_{m}=(n_{m}-l_{m})^{-1/2}Q_{N,m}$, $\cG_{m}=\sig\{ V_1,...,V_m\}\subset
 \cF_{-\infty,n_{m}+m^{1/4}_N}$ and $g_m=g$ defined in Lemma \ref{lem3.14}, so that
 $G_m=G$ is the mean zero $d$-dimensional Gaussian distribution with the covariance matrix $\vs$ and the characteristic function $g$.
 By Lemma \ref{lem3.14} for $|w|\leq K_m$,
\begin{equation}\label{4.4}
 E\big\vert E\big(\exp(i\langle w,V_{m}\rangle)| \cG_{m-1}\big)-g_{m}(w)\big\vert\\
 \leq C_6(n_{m}-l_{m})^{-\wp}
 \end{equation}
 where we take $K_{m}=(n_m-l_m)^{\wp/4d}\leq (n_m-l_m)^{\wp/2},\,\wp=\frac 1{20}$
 and recall that $n_m-l_m=3m_N-3[m_N^{1/4}]$.
 Theorem \ref{thm4.1} requires that $K_m\geq 10^8d$ which will hold true in our
 case if $N\geq N_0=N_0(\ka,\wp)=(10^{256d/\wp}d^{32d/\wp})^{1/(1-\ka)}$.

Next, let $\Psi$ be a mean zero Gaussian random variable with the
covariance matrix $\vs$. Then by estimates of Lemma \ref{lem3.4} and the
Chebyshev inequality,
\begin{eqnarray}\label{4.5}
& G(\{ y\in\bbR^d:\, |y|\geq\frac 12(n_m-l_m)^{\frac \wp{4d}}\})\\
&\leq P\{|\Psi| \geq\frac 12(n_m-l_m)^{\frac \wp{4d}}\}\leq 4L^2d
(n_m-l_m)^{-\frac \wp{2d}}\nonumber\\
&=4L^2d3^{-\wp/2d}n^{-\wp(1-\ka)/16d}(N^{3(1-\ka)/8}-1)^{-\wp/2d}.\nonumber
\end{eqnarray}

Now, Theorem \ref{thm4.1} provides us with independent random vectors $\{ W_m,\, m\geq 1\}$
having the mean zero Gaussian distribution with the covariance matrix $\vs$ and such that
 \begin{eqnarray}\label{4.6}
 &\vr_{m}=\vr_m(N)=4\frac \wp d(n_m-l_m)^{-\wp/4d}\log(n_m-l_m)+2C_6^{1/2}(n_m-l_m)^{-\wp/4}\\
 &+2L\sqrt d3^{-\wp/4d}N^{-\wp(1-\ka)/32d}(N^{3(1-\ka)/8}-1)^{-\wp/4d}
 \leq C_7N^{-\wp(1-\ka)/8d}\nonumber
 \end{eqnarray}
 where $C_7>0$ does not depend on $N\geq 1$.

 As a crucial corollary of Theorem \ref{thm4.1} we will obtain next a uniform $L^{2M}$-bound on the difference between the sums of $(n_{k}-l_{k})^{1/2}V_k$'s and of $(n_k-l_k)^{1/2}W_{k}$'s. Set
 \[
 I(n)=\sum_{k:\, n_k\leq n}(n_k-l_k)^{1/2}(V_k-W_k).
 \]
 \begin{lemma}\label{lem4.2}
 For any integers $N\geq N_0(\ka,\wp)$ and $M\geq 1$,
 \begin{equation}\label{4.7}
 E\max_{0\leq n\leq NT}|I(n)|^{2M}\leq C_8(M)N^{M-\frac \wp{20d}(1-\ka)}
 \end{equation}
 where $\wp=\frac 1{20}$, $0<\ka <\frac \io{4+\io}$ and $C_8(M)>0$ does not depend
 on $N$.
 \end{lemma}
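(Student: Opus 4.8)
The plan is to present the rescaled block increments $\xi_k:=(n_k-l_k)^{1/2}(V_k-W_k)$ as an approximate martingale difference array for the filtration $\cH_k:=\sig\{U_1,\dots,U_k\}\vee\cG_k$ and then to invoke the two general moment estimates of Section \ref{dynsec3}. Each $\xi_k$ is $\cH_k$-measurable (since $V_k$ is $\cG_k$-measurable and $W_k$ is $\sig\{V_k,U_k\}$-measurable by Theorem \ref{thm4.1}), and $I(n)=\sum_{k:\,n_k\le n}\xi_k$ with $0\le k\le k_N(TN)\le TN/3m_N$. The whole estimate will therefore reduce to a good bound for $A_{2M}=\sup_i\sum_{j\ge i}\|E(\xi_j|\cH_i)\|_{2M}$: Lemma \ref{lem3.2} bounds $E|\sum_{k=1}^n\xi_k|^{2M}$ by $C(M)A_{2M}^{2M}n^M$, and Lemma \ref{lem3.3} passes to the running maximum at the cost of the extra term $E\max_m|\sum_{j>m}E(\xi_j|\cH_m)|^{2M}$.

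I would first dispose of all off-diagonal contributions. For $j>i$ one has $E(W_j|\cH_i)=0$, since $W_j$ is independent of $\sig\{U_1,\dots,U_{j-1}\}\vee\cG_{j-1}\supseteq\cH_i$, and $E(V_j|\cH_i)=E(V_j|\cG_i)$ because the $U$'s are independent of $\vee_m\cG_m$. Now $V_j$ is $\cF_{l_j-[m_N^{1/4}],\infty}$-measurable with $l_j-[m_N^{1/4}]=n_{j-1}+2[m_N^{1/4}]$, while $\cG_i\subset\cF_{-\infty,n_i+m_N^{1/4}}$, so the distance between these $\sig$-algebras is $\ge 3(j-1-i)m_N+[m_N^{1/4}]-2$; hence by Lemma \ref{lem3.1}, $EV_j=0$ and $\|V_j\|_\infty\le L(3m_N)^{1/2}$,
\[
\|E(\xi_j|\cH_i)\|_{2M}\le(n_j-l_j)^{1/2}\,\|E(V_j|\cG_i)\|_\infty\le 6Lm_N\,\phi\big(3(j-1-i)m_N+[m_N^{1/4}]-2\big).
\]
Summing over $j>i$ and using $\phi(n)\le C_0n^{-4}$ from (\ref{2.18}), the term $j=i+1$ dominates and gives $\le 6Lm_N\phi([m_N^{1/4}]-2)\le C$, so $\sup_i\sum_{j>i}\|E(\xi_j|\cH_i)\|_{2M}\le C$ uniformly in $N$; the same pointwise bound shows the additive remainder in Lemma \ref{lem3.3} is $O(1)$ too.

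The diagonal term $\|\xi_i\|_{2M}=(n_i-l_i)^{1/2}\|V_i-W_i\|_{2M}$ carries the real difficulty, and I expect it to be the main obstacle. By (\ref{4.3}) we have $|V_i-W_i|<\vr_i$ off an event $A_i$ with $P(A_i)\le\vr_i$, but on $A_i$ the difference can be as large as $\asymp m_N^{1/2}$, so the naive splitting costs a factor $m_N^{M}$ which would wreck the bound. The way around this is that $V_i$ has uniformly bounded moments of all orders: writing $V_i=(n_i-l_i)^{-1/2}\big(\sum_{l_i\le l<n_i}\xi(l)+\sum_{l_i\le l<n_i}(\xi^{(m_N^{1/4})}(l)-\xi(l))\big)$, Lemma \ref{lem3.11} bounds the first sum in $L^{4M}$ by a constant multiple of $(n_i-l_i)^{1/2}$, while $\|\xi^{(m_N^{1/4})}(l)-\xi(l)\|_\infty\le\rho([m_N^{1/4}])\le C_0m_N^{-1}$ (by (\ref{2.3}) and (\ref{2.18})) makes the second sum $O(1)$; since $W_i$ is mean-zero Gaussian with the bounded (Lemma \ref{lem3.4}) covariance $\vs$, this yields $\|V_i-W_i\|_{4M}\le C(M)$ uniformly in $N$. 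Hölder's inequality with $p=q=2$ then gives
\[
E|V_i-W_i|^{2M}\le\vr_i^{2M}+\|V_i-W_i\|_{4M}^{2M}\,P(A_i)^{1/2}\le C(M)\,\vr_i^{1/2},
\]
so with $\vr_i\le C_7N^{-\wp(1-\ka)/8d}$ from (\ref{4.6}) we obtain $\|\xi_i\|_{2M}^{2M}\le(3m_N)^{M}C(M)\vr_i^{1/2}$ and hence, combining with the previous paragraph, $A_{2M}^{2M}\le C(M)\big(m_N^{M}N^{-\wp(1-\ka)/16d}+1\big)\lesssim N^{M(1-\ka)/2-\wp(1-\ka)/16d}+1$.

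Finally I would assemble the pieces. By Lemmas \ref{lem3.2} and \ref{lem3.3} applied to $S_n=\sum_{k=1}^n\xi_k$, $n\le k_N(TN)$, and since $m_N\asymp N^{(1-\ka)/2}$ so that $m_N^{M}(N/m_N)^{M}=N^{M}$,
\[
E\max_{0\le n\le NT}|I(n)|^{2M}\le C(M)\big(A_{2M}^{2M}(TN/3m_N)^{M}+1\big)\lesssim N^{M-\wp(1-\ka)/16d}+N^{M(1+\ka)/2}.
\]
Both terms are dominated by $N^{M-\wp(1-\ka)/20d}$ — the second because $M(1+\ka)/2\le M-\wp(1-\ka)/20d$ is equivalent to $M\ge\wp/10d$, which holds as $\wp=\frac{1}{20}$ and $M\ge1$ — giving (\ref{4.7}) for $N\ge N_0(\ka,\wp)$ (this lower bound on $N$ being needed so that Theorem \ref{thm4.1} applies and $m_N^{1/4}$ is large); the finitely many $N<N_0$ are absorbed into $C_8(M)$ since the left side is finite for each fixed $N$.
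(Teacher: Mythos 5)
Your proposal is correct and follows essentially the same route as the paper: it treats $I(n)$ as a sum of $\cH_k$-measurable increments, kills the off-diagonal conditional expectations $E(W_j|\cH_i)=0$ and $E(V_j|\cH_i)=E(V_j|\cG_i)$ via the $\phi$-mixing across the $3[m_N^{1/4}]$-gaps (the paper's (4.8)--(4.10)), handles the diagonal term by combining (4.3) with Cauchy--Schwarz and uniform $4M$-moment bounds on $V_k$ and the Gaussian $W_k$ (the paper's (4.11)--(4.13)), and then applies Lemmas \ref{lem3.2} and \ref{lem3.3}. The only differences are cosmetic (you bypass the tower through $\cG_{j\vee(k-2)}$ by using the measurability window of $\xi^{(m_N^{1/4})}$ directly, bound $\|V_k\|_{4M}$ by splitting off the approximation error rather than via (\ref{4.12}), and spell out the final exponent bookkeeping), and your resulting exponent is at least as good as the one claimed in (\ref{4.7}).
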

 \begin{proof}
The proof of (\ref{4.7}) will rely on Lemmas \ref{lem3.2} and \ref{lem3.3}, and so we
 will have to estimate the conditional expectations appearing there taking into account that $V_k$ is $\cG_k\subset\cF_{-\infty,n_{k}+m_N^{1/4}}$-measurable and $W_{k}$ is
 $\cG_k\vee\sig\{ U_1,...,U_k\}$-measurable. Let $k>j\geq 1$.
 Since $W_k$ is independent of $\cG_{k-1}\vee\sig\{ U_1,...,U_{k-1}\}$ we obtain that
 \begin{equation}\label{4.8}
 E(W_k|\cG_j\vee\sig\{ U_1,...,U_j\})=EW_k=0.
 \end{equation}
 Next, since $V_k$ is independent of
 $\sig\{ U_1,...,U_j\}$ and the latter $\sig$-algebra is independent of $\cG_j$ we obtain that (see, for instance, \cite{Chu}, p. 323),
 \begin{equation}\label{4.9}
 E(V_k|\cG_j\vee\sig\{ U_1,...,U_j\})=E(E(V_k|\cG_{j\vee (k-2)})|\cG_j).
 \end{equation}
 By Lemma \ref{lem3.1},
 \begin{eqnarray}\label{4.10}
 &|E(V_k|\cG_{j\vee (k-2)})|\\
 &=(n_{k}-l_{k})^{-1/2}|\sum_{i=l_{k}}^{n_{k}-1}
 E\xi^{(m_N^{1/4})}(i)|\cF_{-\infty,n_{j}\vee n_{k-2}+m_N^{1/4}})|\nonumber\\
 &\leq L(n_{k}-l_{k})^{-1/2}\sum_{i=l_{k}}^{n_{k}-1}(\phi(i-n_{j}\vee n_{k-2}-2m_N^{1/4})+\rho(m_N^{1/4})).\nonumber
 \end{eqnarray}

 Now, in order to bound $A_{2M}$ from Lemma \ref{lem3.2} it remains to consider the
 case $k=n$, i.e. to estimate $\| V_k-W_k\|_{2M}$ and
 then to combine it with (\ref{4.8})--(\ref{4.10}). By the Cauchy--Schwarz inequality for any $n\geq 1$,
 \begin{eqnarray}\label{4.11}
 &E|V_k-W_k|^{2M}=E(|V_k-W_k|^{2M}\bbI_{|V_k-W_k|\leq\vr_k})\\
 &+E(|V_k-W_k|^{2M}\bbI_{|V_k-W_k|>\vr_k})\nonumber\\
 &\leq\vr^{2M}_k+(E|V_k-W_k|^{4M})^{1/2}(P\{|V_k-W_k|>\vr_k\}^{\frac {1}2}\nonumber\\
 &\leq\vr^{2M}_k+\vr^{\frac {1}2}_k2^{2M}((E|V_k|^{4M})^{1/2}+(E|W_k|^{4M})^{1/2}).\nonumber
 \end{eqnarray}
 By Lemmas \ref{lem3.1} and \ref{lem3.11},
\begin{equation}\label{4.12}
(E|V_k|^{4M})^{1/2}\leq\big(C_3(2M)+2\phi(m_N))L^{4M}(n_k-l_k)^{2M}\big)^{1/2}.
\end{equation}
Since $W_k$ is a mean zero $d$-dimensional Gaussian random vector with the covariance matrix $\vs$
we obtain that
\begin{equation}\label{4.13}
E|W_k|^{4M}=\leq|\vs^{1/2}|^{4M}((4M)!)^{d/2}.
\end{equation}
Finally, combining (\ref{4.8})--(\ref{4.13}) with Lemmas \ref{lem3.2} and \ref{lem3.3}
we derive (\ref{4.7}) completing the proof (cf. Lemma 4.4 in \cite{Ki20}).
\end{proof}

Next, let $W(t),\, t\geq 0$ be a standard $d$-dimensional Brownian motion.
 Then the sequences of random vectors $\tilde W_k^\ve =\vs^{1/2}(W(n_k)-W(l_k))$
and $(n_{k}-l_{k})^{1/2}W_k,\, k\leq k_N(TN)$ have the same distributions.
Hence, by Lemma A1 from \cite{BP} the sequences $\xi(k)$ and $W_k$, $k\geq 1$ can be redefined without changing their joint distributions on a richer probability space where there exists a
standard $d$-dimensional Brownian motion $W(t)$ such that the pairs $(V_k,W_k)$
and $(V_k,(n_{k}-l_{k})^{-1/2}\vs^{1/2}(W(n_k)-W(l_k)))$, $k\leq k_N(TN)$, constructed by means
of the redefined processes, have the same joint distributions. Thus we can assume from now on that
\[
W_k=(n_{k}-l_{k})^{-1/2}\vs^{1/2}(W(n_k)-W(l_k)))
\]
and that these $W_k$'s satisfy properties asserted in Theorem \ref{thm4.1}, so that Lemma
\ref{lem4.2} holds true for them, as well.

Next, using the Brownian motion $W(t),\, t\geq 0$ constructed above we consider the new Brownian motion $W_N(t)= N^{-1/2} W(tN)$ and introduce the diffusion process $\Psi_N(t),\, t\geq 0$ solving
 the stochastic differential equation (\ref{3.15}) which we write now with $W_N$ as,
\[
d\Psi_N(t)=\vs^{1/2}dW_N(t)+\fb(\Psi_N(t))dt,\,\, \Psi_N(0)=x_0
\]
where $\fb(x)=E\fb(x,\xi(0))$.
 Now, we introduce the auxiliary process $\hat\Psi_N$ with coefficients frozen at times $n_k/N,\, k\leq k_N(TN)$,
\begin{equation*}
\hat\Psi_N(t)=x_0+W_N(n_{k_N(tN)}/N))
+N^{-1}\sum_{1\leq k\leq k_N(tN)}\fb(\Psi_N(n_{k-2}/N))(n_k-n_{k-1})
\end{equation*}
where $n_{-1}=n_0=0$ and $k_N(s)$ was defined before Lemma \ref{lem3.13}, and
estimate its deviation from $\Psi_N$.

\begin{lemma}\label{lem4.3} For all integers $M,N\geq 1$,
\begin{equation}\label{4.14}
E\max_{0\leq k\leq k_N(TN)}|\Psi_N(n_k/N)-\hat\Psi_N(n_k/N)|^{2M}
\leq C_9(M)N^{-M(2+\ka)/2}
\end{equation}
where $C_9(M)>0$ does not depend on $N$.
\end{lemma}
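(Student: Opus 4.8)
The plan is to write the difference as a sum over the blocks $[n_{j-1}/N,n_j/N]$ and then isolate the only two dangerous pieces. Since $\Psi_N$ solves (\ref{3.15}) driven by $W_N$ we have $\Psi_N(n_k/N)=x_0+\vs^{1/2}W_N(n_k/N)+\int_0^{n_k/N}\fb(\Psi_N(s))\,ds$, and as $n_j-n_{j-1}=3m_N$ for every $j$ the definition of $\hat\Psi_N$ gives
\[
\Psi_N(n_k/N)-\hat\Psi_N(n_k/N)=\sum_{j=1}^{k}\int_{n_{j-1}/N}^{n_j/N}\big(\fb(\Psi_N(s))-\fb(\Psi_N(n_{j-2}/N))\big)\,ds .
\]
First I would write $\fb(\Psi_N(s))-\fb(\Psi_N(n_{j-2}/N))$ as $\big(\fb(\Psi_N(s))-\fb(\Psi_N(n_{j-1}/N))\big)+\big(\fb(\Psi_N(n_{j-1}/N))-\fb(\Psi_N(n_{j-2}/N))\big)$. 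The second bracket does not depend on $s$, so after integration and summation it telescopes to $\frac{3m_N}{N}\big(\fb(\Psi_N(n_{k-1}/N))-\fb(x_0)\big)$, which is $O(m_N/N)$ uniformly in $k$ by the boundedness of $\fb$ from Lemma \ref{lem3.4}, hence $O(N^{-M(1+\ka)})$ in the $2M$-th moment of the maximum. (This cancellation is precisely what the two-step delay in the definition of $\hat\Psi_N$ is for.)

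Next I would estimate $E\max_k\big|\sum_{j\le k}\widetilde\Delta_j\big|^{2M}$, where $\widetilde\Delta_j=\int_{n_{j-1}/N}^{n_j/N}\big(\fb(\Psi_N(s))-\fb(\Psi_N(n_{j-1}/N))\big)\,ds$. Let $\cG_j=\sig\{W_N(u):\,u\le n_j/N\}$, so $\Psi_N(u)$ for $u\le n_j/N$ — and in particular $\widetilde\Delta_j$ — is $\cG_j$-measurable, and set $e_j=E(\widetilde\Delta_j\,|\,\cG_{j-1})$. Then $\widetilde\Delta_j-e_j$ is a $\{\cG_j\}$-martingale difference, so $E(\widetilde\Delta_j-e_j\,|\,\cG_i)=0$ for all $i<j$. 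Using the Lipschitz continuity of $\fb$ (constant $\le 2L_2$, from (\ref{3.12})) and the elementary $L^{2M}$-bounds for increments of the Brownian motion $W_N$, one gets $\|\widetilde\Delta_j\|_{2M}\le 2L_2\int_{n_{j-1}/N}^{n_j/N}\|\Psi_N(s)-\Psi_N(n_{j-1}/N)\|_{2M}\,ds\le C(M)(m_N/N)^{3/2}$, and the same for $\|\widetilde\Delta_j-e_j\|_{2M}$. Hence the quantity $A_{2M}$ of Lemma \ref{lem3.2}, applied with $\eta_j=\widetilde\Delta_j-e_j$, equals $\sup_j\|\widetilde\Delta_j-e_j\|_{2M}\le C(M)(m_N/N)^{3/2}$; since there are at most $k_N(TN)\le TN/3m_N$ blocks, Lemma \ref{lem3.2} yields $E\big|\sum_j(\widetilde\Delta_j-e_j)\big|^{2M}\le C(M,T)(m_N/N)^{3M}(N/m_N)^M=C(M,T)(m_N/N)^{2M}$, and — the relevant conditional expectations being zero — Lemma \ref{lem3.3} upgrades this to the same bound for the maximum over $k$, i.e.\ $O(N^{-M(1+\ka)})$.

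It remains to control the centering part $\sum_{j\le k}e_j$. Here I would invoke the Markov property of the diffusion (\ref{3.15}): conditionally on $\cG_{j-1}$ the process $v\mapsto\Psi_N(n_{j-1}/N+v)$ is a copy of (\ref{3.15}) started from $\Psi_N(n_{j-1}/N)$, so that
\[
e_j=\int_0^{3m_N/N}\big((P_v\fb)(\Psi_N(n_{j-1}/N))-\fb(\Psi_N(n_{j-1}/N))\big)\,dv ,
\]
$(P_v)$ being the transition semigroup of (\ref{3.15}). Using $\partial_vP_v\fb=P_v\cL\fb$, with $\cL$ the (bounded-coefficient, second order) generator of (\ref{3.15}), together with the differentiability of $\Sig$, $b$ and $q^E$ from Section \ref{dynsec2}, one bounds $|(P_v\fb)(z)-\fb(z)|\le Cv$ uniformly in $z$, hence $|e_j|\le C(m_N/N)^2$ a.s.; summing over at most $TN/3m_N$ blocks gives $\max_k\big|\sum_{j\le k}e_j\big|\le C(1+T)m_N/N$ a.s., again $O(N^{-M(1+\ka)})$ in $L^{2M}$. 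Collecting the three bounds and using $m_N/N\le N^{-(1+\ka)/2}$ with $2(1+\ka)\ge 2+\ka$ gives (\ref{4.14}).

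The step I expect to be the real obstacle is the last one. The single-block bound on $\widetilde\Delta_j$ is only $O((m_N/N)^{3/2})$, and with nothing beyond Lipschitzness of $\fb$ one likewise gets only $|e_j|=O((m_N/N)^{3/2})$, whose accumulation over $\asymp N/m_N$ blocks is $O((m_N/N)^{1/2})$ — far too large. The whole point is thus to gain one further power of the time step from the drift, i.e.\ the estimate $(P_v\fb)(z)-\fb(z)=O(v)$, and it is exactly here that the smoothness hypotheses on $\Sig$ and $b$ from Section \ref{dynsec2} enter in an essential way; by contrast the telescoping of the first paragraph and the martingale decomposition of the second are routine once the filtration $\{\cG_j\}$ has been chosen.
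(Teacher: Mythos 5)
Your route is genuinely different from the paper's, and its first two steps are sound: the telescoping of the frozen-coefficient term (which works because $n_j-n_{j-1}=3m_N$ for all $j$ and $n_{-1}=n_0=0$) and the martingale bound for $\sum_j(\widetilde\Delta_j-e_j)$ via Lemmas \ref{lem3.2} and \ref{lem3.3} with the Brownian filtration are correct and give $O(N^{-M(1+\ka)})$, which is indeed stronger than the right hand side of (\ref{4.14}). For comparison, the paper does none of this: it bounds the maximum by the sum of the absolute values of the block errors, uses only the Lipschitz constant $2L_2$ of $\fb$ from (\ref{3.12}) and the increment estimate $E|\Psi_N(t)-\Psi_N(n_{k-2}/N)|^{2M}\leq C(M)(m_N/N)^M$ (the Brownian part dominating), and exploits no cancellation between blocks at all.

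The genuine gap is exactly the step you flagged, and the fix you propose is not available under the paper's hypotheses. The estimate $|(P_v\fb)(z)-\fb(z)|\leq Cv$ via $\partial_vP_v\fb=P_v\cL\fb$ requires $\cL\fb$ to be defined and bounded, i.e. $\fb\in C^2_b$. But $\fb(z)=E\big(\hat b(r^{-1}z,\xi(0))+q(r^{-1}z,\xi(0))\big)$, where $q$ already contains first derivatives of $\Sig^{-1}$, and (\ref{2.5})--(\ref{2.6}) only provide $\Sig$ twice differentiable and $b(\cdot,\zeta)$ once differentiable with bounded derivative; hence $\fb$ is only guaranteed to be $C^1$ with bounded derivative (the paper records just its Lipschitz constant in (\ref{3.12})), $\nabla^2\fb$ need not exist, and even a $C^{1,\alpha}$ Taylor argument fails because no modulus of continuity of $\nabla_xb$ or of $\nabla^2\Sig$ is assumed. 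For a merely Lipschitz $\fb$ the $O(v)$ bound is false in general (take $\fb(x)=|x|$ at $z=0$ against the Gaussian part, which gives order $\sqrt v$), so only $|e_j|\leq C(m_N/N)^{3/2}$ is available; then the centering sum is $O((m_N/N)^{1/2})$ and your scheme yields $C(M,T)N^{-M(1+\ka)/2}$ rather than the exponent claimed in (\ref{4.14}). This weaker order is, as far as I can see, also what the paper's own display actually produces (its prefactor $N^{-(2M-1)}$ plays the role of $T^{2M-1}$, the block lengths cancelling against the number of blocks), and any negative power of $N$ suffices for the way the lemma is used in (\ref{4.19})--(\ref{4.25}); but as a proof of (\ref{4.14}) as stated, your last step needs either additional smoothness of $\Sig$ and $b$ or a different treatment of $\sum_je_j$.
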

\begin{proof}
It follows from (\ref{3.12}) that $\fb$ has the Lipschitz constant $2L_2$, and taking into
account also the Cauchy--Schwarz inequality we have that
\begin{eqnarray*}
&E\max_{0\leq k\leq k_N(TN)}|\Psi_N(n_k/N)-\hat\Psi_N(n_k/N)|^{2M}\\
&\leq 2^{2M}L_2^{2M}N^{-(2M-1)}\sum_{0\leq k\leq k_N(TN)}\int_{n_{k-1}/N}^{n_k/N}E|\Psi_n(t)-\Psi_N(n_{k-2}/N)|^{2M}dt.
\end{eqnarray*}
Now, for $n_{k-1}/N\leq t\leq n_{k}/N$,
\begin{eqnarray*}
&E|\Psi_N(t)-\Psi_N(n_{k-2}/N)|^{2M}\\
&\leq 2^{M-1}(|\vs^{1/2}|^{2M}E|W_N(t)-W_N(n_{k-2}/N)|^{2M}+
E|\int_{n_{k-2}/N}^{n_k/N}\fb(\Psi_N(s))ds|^{2M})\\
&\leq 2^{2M-1}(\frac {6m_N}N)^M(d^{2M}(2M)!|\vs^{1/2}|^{2M}+L_2^{2M}(\frac {6m_N}N)^M)
\end{eqnarray*}
and (\ref{4.14}) follows.
\end{proof}

Next, we define
\begin{equation*}
\hat Y_N(t)=x_0+\sum_{0\leq k< k_N(tN)}\big(N^{-1/2}\al_{N,k}
+N^{-1}\fb(Y_{N,k-1})(n_{k+1}-n_k)
\end{equation*}
where $\al_{N,k}$ is the same as in Lemma \ref{lem3.9}. In order to use the estimate of
Lemma \ref{lem3.9} we will need first to compare $\hat Y_N$ with the sum appearing there.

\begin{lemma}\label{lem4.4} For all $N\geq 1$,
\begin{equation}\label{4.15}
E\sup_{0\leq t\leq T}|\hat Y_N(t)-\breve Y_N(t)|^{2M}\leq C_{10}(M)N^{-\frac M2\min(\frac 12,\,5-7\ka)}
\end{equation}
where $\breve Y_N$ is the same as in Lemma \ref{lem3.9} and $C_{10}(M)>0$ does not depend on $N$.
\end{lemma}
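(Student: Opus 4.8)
\emph{Proof strategy.} Both $\hat Y_N$ and $\breve Y_N$ start from $x_0=Y_N^{(m_N)}(0)$ and are built by summing over the same blocks $[n_l,n_{l+1})$; at block $l$ the drift increment $N^{-1}\fb(Y_{N,l-1})(n_{l+1}-n_l)=3m_NN^{-1}\fb(Y_{N,l-1})$ of $\hat Y_N$ stands in place of the increment $N^{-1}\be_{N,l}=N^{-1}\sum_{j=n_l}^{n_{l+1}-1}\fb(Y_{N,l-1},\xi^{(m_N)}(j))$ of $\breve Y_N$, and the two summation ranges differ by at most the single block containing $tN$. Writing
\[
\gamma_{N,l}=\be_{N,l}-3m_N\fb(Y_{N,l-1})=\sum_{j=n_l}^{n_{l+1}-1}\big(\fb(Y_{N,l-1},\xi^{(m_N)}(j))-\fb(Y_{N,l-1})\big),
\]
one gets $\hat Y_N(t)-\breve Y_N(t)=-N^{-1}\sum_{0\le l<k_N(tN)}\gamma_{N,l}+\cR_N(t)$, where $\cR_N(t)$ is a remainder supported on the block $k_N(tN)$ (the terms $N^{-1/2}\al_{N,k_N(tN)}$ and $N^{-1}\be_{N,k_N(tN)}$). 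I would bound the two contributions in $L^{2M}$ uniformly in $t$ and add them.

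The remainder is handled crudely: $|N^{-1}\be_{N,k}|\le 3L_2m_N/N$ deterministically by (\ref{3.12}), while a version of Lemma \ref{lem3.11} with $\xi$ replaced by $\xi^{(m_N)}$ (the replacement costs only $3m_N\rho(m_N)$ by (\ref{2.3}) and (\ref{3.12})) gives $E|\al_{N,k}|^{2M}\le C(M)m_N^M$, whence
\[
E\sup_{0\le t\le T}|N^{-1/2}\al_{N,k_N(tN)}|^{2M}\le N^{-M}\!\!\sum_{0\le k\le k_N(TN)}\!\!E|\al_{N,k}|^{2M}\le C(M)N^{1-M}m_N^{M-1}.
\]
This is $O(N^{-(M-1)(1+\ka)/2})$, which for $M\ge 2$ is dominated by $N^{-\frac M2\min(1/2,\,5-7\ka)}$ in the admissible range of $\ka$; the case $M=1$ then follows from the case $M=2$ by Jensen's inequality.

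For the main term, put $\cG_l=\cF_{-\infty,\,n_{l+1}+m_N}$; then $\gamma_{N,l}$ is $\cG_l$-measurable, because $Y_{N,l-1}$ depends only on $\xi^{(m_N)}(0),\dots,\xi^{(m_N)}(n_{l-1}-1)$ (hence on $\cF_{-\infty,\,n_{l-1}-1+m_N}$) while the block uses $\xi^{(m_N)}(j)$, $n_l\le j<n_{l+1}$, and between these two index windows there is a gap of order $m_N$. That gap is exactly what makes the $\phi$-coefficient useful: to use it one must first \emph{freeze} the random coefficient $Y_{N,l-1}$ by conditioning on $\cF_{-\infty,\,n_{l-1}-1+m_N}$, and only then apply Lemma \ref{lem3.1} to each summand; together with $|E\fb(x,\xi^{(m_N)}(0))-\fb(x)|\le 2L_2\rho(m_N)$ this yields, uniformly in the frozen value,
\[
\|E(\gamma_{N,l}\,|\,\cG_i)\|_\infty\le 6L_2m_N\big(\phi(m_N)+\rho(m_N)\big)=O(m_N^{-3})\qquad(i\le l-3),
\]
while for the finitely many remaining (``close'') indices $i$ one uses only the trivial bound $\|\gamma_{N,l}\|_\infty\le 6L_2m_N$ from (\ref{3.12}). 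Hence the quantity $A_{2M}=\sup_i\sum_{l\ge i}\|E(\gamma_{N,l}|\cG_i)\|_{2M}$ of Lemma \ref{lem3.2} satisfies $A_{2M}\le C\big(m_N+k_N(TN)m_N^{-3}\big)=C\big(m_N+Nm_N^{-4}\big)$. Applying Lemma \ref{lem3.2} with $n=k_N(TN)=O(N^{(1+\ka)/2})$ and then Lemma \ref{lem3.3} to pass to the maximum over $t$ (the extra maximal term is of no larger order) gives
\[
E\sup_{0\le t\le T}\Big|N^{-1}\!\!\sum_{0\le l<k_N(tN)}\!\!\gamma_{N,l}\Big|^{2M}\le C(M)N^{-2M}A_{2M}^{2M}N^{M(1+\ka)/2},
\]
which is $O(N^{-\frac M2(1+\ka)})$ when the $m_N$ term dominates in $A_{2M}$ and $O(N^{-\frac M2(7-9\ka)})$ when the $Nm_N^{-4}$ term does; in either case this is $\le N^{-\frac M2\min(1/2,\,5-7\ka)}$. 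Combining with the remainder estimate and tracking constants yields (\ref{4.15}).

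The crux is the third step. The obstacle is that each block sum $\gamma_{N,l}$ carries the \emph{random} coefficient $Y_{N,l-1}$, whose dependence reaches back to $-\infty$, so $\phi$-mixing cannot be applied to $\gamma_{N,l}$ directly; one must condition to freeze that coefficient and only then invoke Lemma \ref{lem3.1}, keeping all estimates uniform in the frozen value, correctly identifying the effective gap ($\approx m_N$, once the smoothing width of $\xi^{(m_N)}$ is not counted twice) that produces the $\phi(m_N)$ and $\rho(m_N)$ decay, and controlling the quadratic-in-$\xi$ part $q$ of $\fb$ through its Lipschitz bound on $\{|\xi|\le L\}$ supplied by (\ref{3.12}).
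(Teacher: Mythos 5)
Your proposal is correct and follows essentially the same route as the paper: the difference is reduced to the block sums $\fb(Y_{N,k-1})(n_{k+1}-n_k)-\be_{N,k}$, whose conditional expectations are bounded by first conditioning on $\cF_{-\infty,n_{k-1}+m_N}$ to freeze $Y_{N,k-1}$ and then applying Lemma \ref{lem3.1} together with the approximation coefficient, giving the $m_N(\phi(m_N)+\rho(m_N))$ decay, after which Lemmas \ref{lem3.2} and \ref{lem3.3} yield exactly the two exponents $N^{-M(1+\ka)/2}$ and $N^{-M(7-9\ka)/2}$ appearing in the paper's estimate (\ref{4.18}). Your explicit treatment of the one-block boundary mismatch (the extra $\al$ and $\be$ terms), which the paper passes over silently, is a harmless additional refinement rather than a different method.
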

\begin{proof} The left hand side of (\ref{4.15}) equals
$ N^{-2M}E\sup_{0\leq t\leq T}|J(t)|^{2M}$ where
\[
J(t)=\sum_{0\leq k<  k_N(tN)}(\fb(Y_{N,k-1})(n_{k+1}-n_k)-\be_{N,k}).
\]
Now, by (\ref{2.1}), (\ref{2.3}), (\ref{3.12}) and Lemma \ref{lem3.1} for any $k\geq l+1$,
\begin{eqnarray}\label{4.17}
&\big\vert E(\fb(Y_{N,k-1})(n_{k+1}-n_k)-\be_{N,k})|\cF_{-\infty,n_l+m_N})\big\vert\\
&=\big\vert E\big(\sum_{j=n_k}^{n_{k+1}}E(\fb(Y_{N,k-1})-\fb(Y_{N,k-1},\xi(j))+\fb(Y_{N,k-1},\xi(j))\nonumber\\
&-\fb(Y_{N,k-1},\xi^{(m_N)}(j))|\cF_{-\infty,n_{k-1}+m_N})|\cF_{-\infty,n_l+m_N}\big)\big\vert\nonumber\\
&\leq LN^{(1-\ka)/2}(4\phi(N^{(1-\ka)/2})+\rho(N^{(1-\ka)/2})).
\nonumber\end{eqnarray}
When $l=k+1$ then $\fb(Y_{N,k-1})(n_{k+1}-n_k)-\be_{N,k}$ is $\cF_{-\infty,n_l}$-measurable
and we estimate the left hand side of (\ref{4.17}) then and when $l=k$ just by $2LN^{(1-\ka)/2}$. Thus, relying on
 Lemmas \ref{lem3.2}, \ref{lem3.3} and \ref{lem3.11} we obtain
\begin{equation}\label{4.18}
E\sup_{0\leq t\leq T}|J(t)|^{2M}\leq\tilde C(M)N^{M(3-\ka)/2}(1+N^{M(1+\ka)}(\phi(N^{(1-\ka)/2})+\rho(N^{(1-\ka)/2}))^{2M}
\end{equation}
for some $\tilde C(M)>0$ independent of $N$.
\end{proof}

Next, denote
\begin{equation*}
\tilde\Psi_N(t)=x_0+\vs^{1/2}W_N(n_{k_N(tN)}/N)
+N^{-1}\sum_{0\leq k< k_N(tN)}\fb(Y_{N,k-1})(n_{k+1}-n_k).
\end{equation*}
Then
\begin{eqnarray}\label{4.19}
&\quad E\sup_{0\leq s\leq T}|\hat Y_N(s)-\hat\Psi^\ve(s)|^{2M}\leq 2^{2M-1}(E\max_{0\leq k<k_N(TN)}|\hat Y_N(n_k/N)\\
&-\tilde\Psi_N(n_k/N)|^{2M}+E\max_{0\leq k<k_N(TN)}|\tilde\Psi_N(n_k/N)-\hat\Psi_N(n_k/N)|^{2M})\nonumber.
\end{eqnarray}
By Lemmas \ref{lem3.3}, \ref{lem3.13} and \ref{lem4.2} for any $N\geq N_0(\ka,\wp)$,
\begin{eqnarray}\label{4.20}
&E\max_{0\leq k\leq n}|\hat Y_N(n_k/N)-\tilde\Psi_N(n_k/N)|^{2M}\\
&\leq 2^{2M-1}N^{-M}\big(E\sup_{0\leq t\leq T}|I(t)|^{2M}+E\max_{1\leq n\leq N}
(|R^{(1)}_N(n)|+|R^{(2)}_N(n)|)^{2M}\big)\nonumber\\
&\leq C_{11}(M)(N^{-\frac \wp{20d}(1-\ka)}+N^{-M(1-\ka)/4})\nonumber
\end{eqnarray}
where $C_{11}(M)>0$ does not depend on $n$ and $N$.

In order to estimate the second term in the right hand side of (\ref{4.19}) introduce the
$\sig$-algebras $\cQ_s=\cF_{-\infty,s+m_N}\vee\sig\{ W(u),\, 0\leq u\leq s\}$ and observe that
by our construction for each $k$ the increment $W(n_{k+1})-W(l_{k+1})$ is independent of $\cQ_{n_{k-1}}$.
 On the other hand, for any $k\geq n$ both $Y_{N,k}$ and $\Psi_N(n_k/N)$ are
 $\cQ_{n_k}$-measurable. Observe that
\begin{eqnarray}\label{4.22}
&E\max_{0\leq k\leq k_N(TN)}|\tilde\Psi_N(n_k/N)-\hat\Psi_N(n_k/N)|^{2M}\\
&\leq 2^{2M-1}(E\max_{0\leq k\leq k_N(TN)}|\cI_1(n_k)|^{2M}+E\max_{0\leq k\leq k_N(TN)}|\cI_2(n_k)|^{2M})
\nonumber\end{eqnarray}
where
\begin{equation*}
\cI_1(n_k)=N^{-1}\sum_{0\leq l\leq k-1}(\fb(Y_N(n_{l-1}/N))
-\fb(\Xi_N(n_{l-1}/N)))(n_{l+1}-n_l)
\end{equation*}
and
\begin{equation*}
\cI_2(n_k)=\sum_{0\leq j\leq k-1}\vs^{1/2}(W_N(l_{j+1}/N)-W_N(n_j/N)).
\end{equation*}
By the estimates of Lemma \ref{lem3.4} and the Lipschitz continuity of $\fb$ we have
\begin{equation}\label{4.23}
|\cI_2(n_k)|^{2M}\leq C_{13}(M)\frac {m_N}{NT}\sum_{0\leq l\leq k-1}|Y_N^{(m_N)}(n_{l-1}/N)-\Psi_N(n_{l-1}/N)|^{2M}
\end{equation}
for some $C_{13}>0$ which does not depend on $N$ or $k$. Since $\cI_2(n_k)$ can be viewed as a stochastic integral
we can rely on the corresponding martingale moment inequalities (see Section 1.7 in \cite{Mao}) which yields that
\begin{equation}\label{4.24}
E\max_{0\leq j\leq k}|\cI_2(n_j)|^{2M}\leq (\frac {2M}{2M-1})^{2M}E|\cI_2(n_k)|^{2M}\leq C_{14}N^{-3(1-\ka)/8}
\end{equation}
for some $C_{14}(M)>0$ independent of $N$.

Now denote
\[
G_k=E\max_{0\leq l\leq k}|Y_N^{(m_N)}(n_l/N)-\Psi_N(n_l/N)|^{2M}.
\]
Then we obtain from (\ref{4.14}), (\ref{4.15}) and (\ref{4.19})--(\ref{4.24}) that
for $n\leq k_N(TN)$ and $N\geq N_0(\ka,\wp)$,
\begin{eqnarray*}
&G_n\leq (2^{2M}C_{11}(M)+3^{2M}C_{14}(M))N^{-\frac \wp{20d}(1-\ka)}\\
&+2^{2M-1}(C_{12}(M)+C_{13}(M))\frac {m_N}{NT}\sum_{0\leq k\leq n-1}G_k.
\end{eqnarray*}
By the discrete (time) Gronwall inequality (see, for instance, \cite{Cla}),
\begin{eqnarray}\label{4.25}
&G_{k_N(TN)}\leq (2^{2M}C_{11}(M)\\
&+3^{2M}C_{14}(M))N^{-\frac \wp{20d}(1-\ka)}\exp((2^{2M}+3^{2M})(C_{12}(M)+C_{13}(M)))
\nonumber\end{eqnarray}
and Theorem \ref{thm2.1} follows taking into account (\ref{2.6}), (\ref{3.5}) and Lemmas \ref{lem3.5}--\ref{lem3.7}. (cf. Section 4.4 in \cite{Ki20}).  \qed

\section{Continuous time case  }\label{dynsec5}\setcounter{equation}{0}

We start with a diffeomorphism $r:\,\bbR^d\to\bbR^d$ satisfying (\ref{3.5}) and
set $Z^\ve(t)=r(X^\ve(t))$. Then
\begin{equation}\label{5.1}
\frac {dZ^\ve(t)}{dt}=Dr(X^\ve(t))\frac {dX^\ve(t)}{dt}=\frac 1\ve\xi(t/\ve^2)+\fb(Z^\ve(t),\xi(t/\ve^2))
\end{equation}
where $\fb(z,\zeta)=Dr(r^{-1}(z))b(r^{-1}(z),\zeta)$. Taking into account Lemma \ref{lem3.4} we see that
the limits in (\ref{2.24}) and (\ref{2.25}) exist and we can define the diffusion $\Xi$ by (\ref{2.11}).
In view of the definition of $c(x)$ in (\ref{2.25}) the computations of Lemma \ref{lem3.6} yield that the
process $\Psi(t)=r(\Xi(t))$ solves the stochastic differential equation
\begin{equation}\label{5.2}
d\Psi(t)=\vs^{1/2}dW(t)+\fb(\Psi(t))dt
\end{equation}
where $\fb(z)=E\fb(z,\eta(0))$. Hence, in order to prove (\ref{2.26}) it suffices to show that for each
$\ve>0$ both $Z^\ve$ and $\Psi=\Psi^\ve$ can be redefined on $(\Om,\cF,P)$ preserving their distributions
so that for any $\ve,T>0$ and an integer $M\geq 1$,
\begin{equation}\label{5.3}
E\sup_{0\leq t\leq T}|Z^\ve(t)-\Psi(t/\bar\tau)|^{2M}\leq C_1(M,T)\ve^{\del_1}
\end{equation}
for some $\del_1,\, C_1(M,T)>0$ independent of $\ve$.

Changing the time $s=t/\ve^2$ we can write
\begin{equation}\label{5.4}
Z^\ve(s\ve^2)=Z^\ve(s\ve^2,\om)=Z^\ve(0,\om)+\ve\int_0^s\xi(u,\om)du+\ve^2\int_0^s\fb(Z^\ve(u\ve^2,\om),
\xi(u,\om))du.
\end{equation}
Set $\Te_n(\om)=\sum_{j=0}^{n-1}\tau\circ\te^j(\om)$, $\Te_0(\om)=0$.
Introduce the discrete time processes $Y^{\ve}(m\ve^2,\om)=Y^{\ve}(m\ve^2,(\om,0))$ by the recurrence
relation $Y^{\ve}(0,\om)=Z^\ve(0,\om)=Z^\ve(0,(\om,0))$ and for $n\geq 0$,
\begin{eqnarray}\label{5.5}
&Y^\ve((n+1)\ve^2,\om)=Y^\ve(n\ve^2,\om)+\ve\int_{\Te_n(\om)}^{\Te_{n+1}(\om)}\xi(u,\om)du\\
&+\ve^2\int_{\Te_n(\om)}^{\Te_{n+1}(\om)}\fb(Y^\ve(n\ve^2,\om),\,\xi(u,\om))du\nonumber\\
&=Y^\ve(n\ve^2,\om)+\ve\eta\circ\vt^n(\om)+\ve^2\hat\fb(Y^\ve(n\ve^2,\om),\cdot)\circ\vt^n\nonumber
\end{eqnarray}
where $\hat\fb(z,\om)=Dr(r^{-1}(z))\hat b(r^{-1}(z),\om)$.

\begin{lemma}\label{lem5.1}
For all $\ve>0$, $\om\in\Om$ and an integer $N\geq 1$,
\begin{equation}\label{5.6}
\max_{0\leq n<N}\sup_{\Te_n(\om)\leq s<\Te_{n+1}(\om)}|Z^\ve(s\ve^2,\om)-Y^\ve(n\ve^2,\om)|\leq\ve(1+\ve)L\hat L\exp(L^3(L^2+1)\hat LN\ve^2).
\end{equation}
\end{lemma}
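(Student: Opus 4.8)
The plan is to estimate the difference $Z^\ve(s\ve^2,\om)-Y^\ve(n\ve^2,\om)$ by subtracting the integral representation \eqref{5.4} of $Z^\ve$ from the recurrence \eqref{5.5} defining $Y^\ve$, and then to run a discrete Gronwall argument over the index $n$, exactly in the spirit of Lemmas \ref{lem3.5} and \ref{lem3.7}. First I would fix $\om$ and, for $s\in[\Te_n(\om),\Te_{n+1}(\om))$, write
\[
Z^\ve(s\ve^2,\om)-Y^\ve(n\ve^2,\om)=\big(Z^\ve(0,\om)-Y^\ve(0,\om)\big)+\sum_{j=0}^{n-1}\big(\text{increment of }Z^\ve\text{ over }[\Te_j,\Te_{j+1}]-\text{increment of }Y^\ve\big)+\text{remainder over }[\Te_n,s],
\]
the initial term being zero since $Y^\ve(0,\om)=Z^\ve(0,\om)$. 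Over one full excursion $[\Te_j(\om),\Te_{j+1}(\om)]$ the $Z^\ve$-increment is $\ve\int_{\Te_j}^{\Te_{j+1}}\xi(u,\om)\,du+\ve^2\int_{\Te_j}^{\Te_{j+1}}\fb(Z^\ve(u\ve^2,\om),\xi(u,\om))\,du$, while the $Y^\ve$-increment is $\ve\,\eta\circ\vt^j(\om)+\ve^2\,\hat\fb(Y^\ve(j\ve^2,\om),\cdot)\circ\vt^j$; by the definitions $\eta(\om)=\int_0^{\tau(\om)}\xi(s,\om)\,ds$ and $\hat b(x,\om)=\int_0^{\tau(\om)}b(x,\xi(s,\om))\,ds$ together with the suspension identities for $\xi$, the leading $\ve$-terms cancel \emph{exactly}, and the difference of the $\ve^2$-terms is $\ve^2\int_{\Te_j}^{\Te_{j+1}}\big(\fb(Z^\ve(u\ve^2,\om),\xi(u,\om))-\fb(Y^\ve(j\ve^2,\om),\xi(u,\om))\big)\,du$.

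Next I would bound this difference. Using \eqref{2.5}, \eqref{2.6}, \eqref{2.16} and the Lipschitz bounds recorded in \eqref{3.12} (applied to $\fb(z,\zeta)=Dr(r^{-1}(z))b(r^{-1}(z),\zeta)$, whose Lipschitz constant in $z$ is controlled by a constant of the form $L^3(L^2+1)$ coming from $\sup|Dr|=\sup|\Sig^{-1}|\le L$, the bounds on $\nabla\Sig$, and the Lipschitz constant of $b$), and using $\tau\le\hat L$ from \eqref{2.21}, each such term is at most $\ve^2\hat L\cdot L^3(L^2+1)\cdot|Z^\ve(u^*\ve^2,\om)-Y^\ve(j\ve^2,\om)|$ for appropriate intermediate times; absorbing the variation of $Z^\ve$ within a single excursion (which is itself $O(\ve)$ since the integrand in \eqref{5.4} is bounded by $L$ in the $\xi$-part and $O(\ve)$ times a bounded term in the $\fb$-part, over a time window of length $\tau\ve^2\le\hat L\ve^2$) one gets, writing $d_n=\max_{0\le j\le n}\sup_{\Te_j\le s<\Te_{j+1}}|Z^\ve(s\ve^2,\om)-Y^\ve(j\ve^2,\om)|$,
\[
d_n\le \ve(1+\ve)L\hat L+L^3(L^2+1)\hat L\,\ve^2\sum_{j=0}^{n-1}d_j,
\]
where the additive term $\ve(1+\ve)L\hat L$ accounts for the within-excursion remainder $\int_{\Te_n}^{s}$ and the analogous variation estimate. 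The factor $(1+\ve)$ records that the $\ve$-part of the remainder contributes $\ve L\hat L$ while the $\ve^2$-part contributes at most $\ve^2 L\hat L$.

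Finally I would apply the discrete-time Gronwall inequality (as in \cite{Cla}, used already in Lemmas \ref{lem3.5}, \ref{lem3.7}, \ref{lem4.2}) to this recursion, which gives $d_{N-1}\le \ve(1+\ve)L\hat L\exp\big(L^3(L^2+1)\hat L\,N\ve^2\big)$, i.e.\ precisely \eqref{5.6}. No probability is needed here — the estimate is pathwise (uniform in $\om$) — so the only real care is bookkeeping: making sure the leading $\ve$-terms cancel identically via the suspension/roof-function identities, and tracking the constants so that the Lipschitz constant of $\fb$ comes out as $L^3(L^2+1)\hat L$ and the prefactor as $\ve(1+\ve)L\hat L$. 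The main obstacle, such as it is, is verifying the exact cancellation of the $O(\ve)$ terms: one must unwind the suspension definition of $\xi(t,(\om,s))$ and of $\Te_n$ carefully to see that $\int_{\Te_n(\om)}^{\Te_{n+1}(\om)}\xi(u,\om)\,du=\eta\circ\vt^n(\om)$ and similarly for $\hat\fb$, after which everything else is a routine Gronwall estimate with the constants chosen as above.
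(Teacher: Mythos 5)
Your proposal is correct and follows essentially the same route as the paper: the exact cancellation of the $\ve$-terms over full excursions via the suspension identities, the Lipschitz bound $|\fb(z,\cdot)-\fb(y,\cdot)|\leq L^3(L^2+1)|z-y|$, and the per-excursion recursion $Q(n+1,\om)\leq L^3(L^2+1)\ve^2\sum_{k\leq n}Q(k,\om)\tau\circ\vt^k(\om)+\ve(1+\ve)L\tau\circ\vt^n(\om)$ closed by the discrete Gronwall inequality with $\tau\leq\hat L$. Only minor remark: no ``absorption'' of the within-excursion variation of $Z^\ve$ is actually needed in the sum, since $\sup_{\Te_j\leq u<\Te_{j+1}}|Z^\ve(u\ve^2,\om)-Y^\ve(j\ve^2,\om)|$ is exactly the quantity $Q(j,\om)$ being iterated.
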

\begin{proof}
Set
\[
Q(n,\om)=\sup_{\Te_n(\om)\leq s<\Te_{n+1}(\om)}|Z^\ve(s\ve^2,\om)-Y^\ve(n\ve^2,\om)|.
\]
Then by (\ref{2.5}), (\ref{2.6}), (\ref{2.16}), (\ref{3.5}) and the definition of $\fb$,
\begin{equation}\label{5.7}
Q(n+1,\om)\leq L^3(L^2+1)\ve^2\sum_{k=0}^nQ(k,\om)\tau\circ\vt^k(\om)+\ve L\tau\circ\vt^n(\om)(1+\ve)
\end{equation}
since $|\fb(z,\om)-\fb(y,\om)|\leq L^3(L^2+1)|z-y|$. Then by (\ref{2.21}) and the discrete Gronwall
inequality we see that for all integers $n\geq 1$,
\[
Q(n,\om)\leq \ve(1+\ve)L\hat L\exp(L^3(L^2+1)\hat Ln\ve^2)
\]
and (\ref{5.6}) follows.
\end{proof}

Next, observe that we can apply Theorem \ref{thm2.1} to the process $Y^\ve$. Indeed, though we have here
the terms $\hat\fb(z,\cdot)\circ\vt^n$ which are slightly more general than $\hat\fb(z,\eta\circ\vt^n)$
appearing in the setup of Theorem \ref{thm2.1} but, in fact, we used there only the appropriate decay
of the approximation coefficient $\rho$ together with the Lipschitz continuity of coefficients in the
second variable given by (\ref{2.6}) which is replaced here by the coefficient $\rho$ in (\ref{2.23}) playing
the same role (with no need in additional Lipschitz continuity) and the proof proceeds in the same way.
Moreover, if $(\Om,\cF,P)$ is Lebesgue (or standard probability) space then we can always represent
$\hat\fb(z,\cdot)\circ\vt^n$ in the form $\hat\fb(z,\zeta\circ\vt^n)$, where $\zeta(n)=\zeta\circ\vt^n$
is a real valued stationary process, and considering the two component stationary process
 $(\eta\circ\vt^n,\,\zeta\circ\vt^n)_{n\in\bbZ}$ we come back to the setup of Theorem \ref{thm2.1}.
 Observe also that though in Theorem \ref{thm2.1} we considered only subsequences $\ve=\ve_N=1/N$ the
 assertion remains true for all small $\ve>0$. Indeed, let $(N_\ve+1)^{-1}\leq\ve^2<N_\ve^{-1}$. Then
 \begin{eqnarray*}
 &|Y^\ve(n\ve^2)-Y^{N_\ve^{-1/2}}(nN^{-1}_\ve)|\leq |\ve-N_\ve^{-1/2}|\sum_{k=0}^{n-1}\eta\circ\vt^k|\\
 &+\ve^2L^3(L^2+1)\sum_{k=0}^{n-1}|Y^\ve(k\ve^2)-Y^{N_\ve^{-1/2}}(kN^{-1}_\ve)|+L^2T\ve^{-2}|\ve^2-N^{-1}_\ve|.
 \end{eqnarray*}
 By the discrete Gronwall inequality
 \[
 \max_{0\leq n\leq T/\ve^2}|Y^\ve(n\ve^2)-Y^{N_\ve^{-1/2}}(nN^{-1}_\ve)|\leq\tilde C(\frac {\ve^3}{\sqrt {1-\ve^2}}
 \max_{0\leq n\leq T/\ve^2}|\sum_{k=0}^{n-1}\eta\circ\vt^k|+\frac {\ve^2}{1-\ve^2}).
 \]
 where $\tilde C>0$ does not depend on $\ve$. Applying Lemmas \ref{lem3.1}--\ref{lem3.3} to the right hand side here
 (similarly to Lemmas \ref{lem3.11}, \ref{lem4.4} and Lemma \ref{lem5.2} below) we obtain that
 \[
 E\max_{0\leq n\leq T/\ve^2}|Y^\ve(n\ve^2)-Y^{N_\ve^{-1/2}}(nN^{-1}_\ve)|^{2M}\leq\tilde {\tilde C}
 \frac {\ve^{2M}}{(1-\ve^2)^M},
 \]
 for some $\tilde C>0$ independent of $\ve$,
 showing that we can apply Theorem \ref{thm2.1} for all $\ve>0$ when $X^\ve$ is in the form (\ref{1.3}) and not only when
 the parameters restricted to the sequence $\ve_N=1/\sqrt N$ provided, of course, other conditions of this assertion are met.

 Applying Theorem \ref{thm2.1} we obtain that both $Y^\ve$ and the diffusion $\Psi=\Psi^\ve$ can be redefined
 on $(\Om,\cF,P)$ without changing their distributions so that for any $\ve>0$ and an integer $M\geq 1$,
 \begin{equation}\label{5.8}
 \sup_{0\leq t\leq T/\bar\tau}|Y^\ve(t)-\Psi^\ve(t)|^{2M}\leq C_2(M,T)\ve^{\del_2}
 \end{equation}
 for some $\del_2,\, C_2(M,T)>0$ independent of $\ve$ where we set $Y^\ve(t)=Y^\ve(n\ve^2)$
 if $n\ve^2\leq t<(n+1)\ve^2$.

Now we have
\begin{eqnarray}\label{5.9}
& \sup_{0\leq t\leq T}|Z^\ve(t)-\Psi^\ve(t/\bar\tau)|^{2M}\leq 3^{2M-1}(E\sup_{0\leq t\leq T}|I_1^\ve(t)|^{2M}\\
&+E\sup_{0\leq t\leq T} |I^\ve_2(t)|^{2M}+E\sup_{0\leq t\leq T}|I_3^\ve(t)|^{2M})\nonumber
 \end{eqnarray}
 where
 \[
 I_1^\ve(t)=Y^\ve(t/\bar\tau)-\Psi^\ve(t/\bar\tau),\, I_2^\ve(t)=\sup_{\Te_{[t/\ve^2\bar\tau]}(\om)
 \leq s<\Te_{[t/\ve^2\bar\tau]}(\om)+1}|Y^\ve(t/\bar\tau,\om)-Z^\ve(s\ve^2,\om)|
 \]
 and
 \begin{equation}\label{5.10}
 I^\ve_3(t)=\sup_{\Te_{[t/\ve^2\bar\tau]}(\om) \leq s<\Te_{[t/\ve^2\bar\tau]}(\om)+1}|Z^\ve(t,\om)-Z^\ve(s\ve^2,\om)|\leq\ve
 L\bar\tau(1+\ve L)+\sup_{0\leq t\leq T}|I_4^\ve(t)|
 \end{equation}
 where
 \[
 I_4^\ve(t)=Z^\ve(t,\om)-Z^\ve(\ve^2\Te_{[t/\ve^2\bar\tau]}(\om),\om).
 \]
 The first and the second terms of (\ref{5.9}) are estimated by (\ref{5.8}) and (\ref{5.7}), respectively, and so in view of
 (\ref{5.10}) in order to derive (\ref{5.3}) it remains to prove the following result.

 \begin{lemma}\label{lem5.2}
 For $\ve>0$ and an integer $M\geq 1$,
 \begin{equation}\label{5.11}
 E\sup_{0\leq t\leq T}|I^\ve_4(t)|^{2M}\leq C_3(M,T)\ve^{\del_3}
 \end{equation}
 for some $\del_3,\, C_3(M,T)>0$ which do not depend on $\ve$.
 \end{lemma}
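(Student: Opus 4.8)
The plan is to compare $Z^\ve(t)$ with the discrete suspension process $Y^\ve$ of \eqref{5.5} at the \emph{correct} index $k(t):=\max\{k\ge 0:\,\Te_k(\om)\le t/\ve^2\}$, then to move from $k(t)$ to the index $n(t):=[t/(\ve^2\bar\tau)]$ appearing in the statement, and to show that this last shift is negligible; it suffices to treat $\ve\le 1$, since for $\ve\ge 1$ one has $|I_4^\ve(t)|\le(L+L^2\hat L)T$ deterministically and \eqref{5.11} is immediate. Write
\[
I_4^\ve(t)=\big(Z^\ve(t)-Y^\ve(k(t)\ve^2)\big)+\big(Y^\ve(k(t)\ve^2)-Y^\ve(n(t)\ve^2)\big)+\big(Y^\ve(n(t)\ve^2)-Z^\ve(\ve^2\Te_{n(t)})\big).
\]
Because $\Te_{k(t)}(\om)\le t/\ve^2<\Te_{k(t)+1}(\om)$ and because $\ve^2\Te_{n(t)}$ lies in the suspension interval of index $n(t)$, Lemma \ref{lem5.1} applies to the first and third terms. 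With $N_1:=[\hat LT\ve^{-2}]+1$, the bound $\hat L^{-1}\le\tau\le\hat L$ of \eqref{2.21} gives $k(t),n(t)<N_1$ and $N_1\ve^2\le\hat LT+1$, so both terms are bounded, uniformly in $t\in[0,T]$, by $\ve(1+\ve)L\hat L\exp\!\big(L^3(L^2+1)\hat L(\hat LT+1)\big)=O(\ve)$, hence contribute $O(\ve^{2M})$ to \eqref{5.11}. Telescoping \eqref{5.5}, the middle term equals $\ve\big(S^{\eta}_{k(t)}-S^{\eta}_{n(t)}\big)$ plus $\ve^2$ times a signed sum of the $|k(t)-n(t)|$ terms $\hat\fb(Y^\ve(j\ve^2),\cdot)\circ\vt^j$ with $j$ between $n(t)$ and $k(t)$, where $S^{\eta}_m:=\sum_{j=0}^{m-1}\eta\circ\vt^j$; since $|\hat\fb|\le L^2\hat L$, that second piece is $\le L^2\hat L\,\ve^2\,|k(t)-n(t)|$ in absolute value.

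It remains to estimate $|k(t)-n(t)|$ and $\ve|S^{\eta}_{k(t)}-S^{\eta}_{n(t)}|$. Writing $\Te_m=m\bar\tau+S^{\tau}_m$ with $S^{\tau}_m:=\sum_{j=0}^{m-1}(\tau\circ\vt^j-\bar\tau)$ and combining $\Te_{k(t)}\le t/\ve^2<\Te_{k(t)+1}$ with $n(t)\bar\tau\le t/\ve^2<(n(t)+1)\bar\tau$ yields
\[
\sup_{0\le t\le T}|k(t)-n(t)|\le 1+\hat L\max_{0\le m\le N_1}|S^{\tau}_m|=:R .
\]
The sequences $\tau\circ\vt^j-\bar\tau$ and $\eta\circ\vt^j$ are bounded, stationary and, under the hypotheses of Theorem \ref{thm2.2} (which include $E\eta=0$), centered, with $\rho$ from \eqref{2.23} and $\phi(n)$ both $O(n^{-4})$; hence the argument of Lemma \ref{lem3.11}, together with Lemmas \ref{lem3.2} and \ref{lem3.3}, gives $E\max_{0\le m\le N}|S^{\tau}_m|^{2M}\le C(M)N^M$ and $E\max_{0\le m\le N}|S^{\eta}_m|^{2M}\le C(M)N^M$. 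In particular $E R^{2M}\le C(M)N_1^M\le C(M)\ve^{-2M}$, so $E\sup_{0\le t\le T}\big(L^2\hat L\,\ve^2\,|k(t)-n(t)|\big)^{2M}\le C(M)\ve^{2M}$, which is harmless in \eqref{5.11}.

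The only delicate term is $\ve|S^{\eta}_{k(t)}-S^{\eta}_{n(t)}|$. Heuristically $R$ is of size $\max_m|S^{\tau}_m|\sim\ve^{-1}$ — not of the trivial size $N_1\sim\ve^{-2}$ — so a sum of at most $|k(t)-n(t)|\lesssim\ve^{-1}$ consecutive centered terms $\eta\circ\vt^j$ is of size $\ve^{-1/2}$, and the prefactor $\ve$ makes it $\ve^{1/2}$. To make this rigorous uniformly in $t$, fix a small $\epsilon>0$, put $Q:=[N_1^{1/2+\epsilon}]+1$ and $B_Q:=\max\{|S^{\eta}_b-S^{\eta}_a|:\,0\le a\le b\le N_1,\ b-a\le Q\}$. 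On $\{R\le Q\}$ one has $|S^{\eta}_{k(t)}-S^{\eta}_{n(t)}|\le B_Q$ for every $t\in[0,T]$, since $k(t),n(t)\in[0,N_1]$ and $|k(t)-n(t)|\le R$. Covering $\{0,\dots,N_1\}$ by $O(N_1/Q)$ consecutive blocks of length $Q$, estimating the partial sums inside each block by Lemma \ref{lem3.11} and Lemma \ref{lem3.3}, and then passing to a high enough power that the number of blocks enters only through an arbitrarily small power $N_1^{1/P}$, one gets $E B_Q^{2M}\le C(M,P)N_1^{1/P}Q^M$, so $\ve^{2M}E B_Q^{2M}\le C\,\ve^{\,M(1-2\epsilon)-2/P}$, a positive power of $\ve$ for $\epsilon$ small and $P$ large. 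On $\{R>Q\}$, whose probability is $\le\hat L^{2M}(Q-1)^{-2M}E\max_{m\le N_1}|S^{\tau}_m|^{2M}\le C(M)N_1^{-2\epsilon M}$, one bounds $\ve|S^{\eta}_{k(t)}-S^{\eta}_{n(t)}|\le 2\ve\max_{m\le N_1}|S^{\eta}_m|$ with $E\big(2\ve\max_{m\le N_1}|S^{\eta}_m|\big)^{4M}\le C(M)\ve^{4M}N_1^{2M}\le C(M)$, and the Cauchy--Schwarz inequality then gives a contribution $\le C(M)\ve^{2\epsilon M}$. Collecting the three groups of terms yields \eqref{5.11}; optimizing $\epsilon$ and $P$ one may take $\del_3=1/4$ for every integer $M\ge 1$.

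The step I expect to be the main obstacle is precisely this last one: one must first prove that the index gap $|k(t)-n(t)|$ is only $O(\ve^{-1})$ — i.e.\ governed by the central-limit-size fluctuations of the return-time sums rather than by their trivial $O(\ve^{-2})$ bound — and then extract the square-root cancellation of the $\eta$-sum over that random index range uniformly in $t$; the naive estimate $|S^{\eta}_{k}-S^{\eta}_{n}|\le 2\max_m|S^{\eta}_m|\sim\ve^{-1}$ is off by exactly the factor that would break \eqref{5.11}, so the block decomposition combined with the deviation bound $P(R>Q)$ is indispensable.
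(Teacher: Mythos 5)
Your proof is correct, and it follows essentially the same strategy as the paper's: reduce $I_4^\ve$ to an $\eta$-sum over a random index window around $[t/\ve^2\bar\tau]$ plus drift and boundary pieces, control the window size through the fluctuations of $\Te_k-k\bar\tau$, and obtain uniformity in $t$ from maximal moment bounds of the type \eqref{5.17}--\eqref{5.18} together with a Chebyshev/Cauchy--Schwarz treatment of the rare event where the window is large. The differences are in the bookkeeping. The paper works directly with the integral representation \eqref{5.4} and splits $|I_4^\ve(t)|$ into the terms $J_1,J_2,J_3$ of \eqref{5.12}, truncating the window at the deterministic level $\ve^{-3/2}$, bounding $\sup_t J_2$ by stationarity plus a union bound over the at most $T/\ve^2\bar\tau$ values of $[t/\ve^2\bar\tau]$, and killing the bad event with a high moment $K$ of $\Te_n-\bar\tau n$. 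You instead pass twice through the discrete suspension process $Y^\ve$ via Lemma \ref{lem5.1}, bound the index gap pathwise by $1+\hat L\max_m|\Te_m-m\bar\tau|$, truncate at $Q\sim\ve^{-1-2\epsilon}$, and control the windowed maximum by a block covering combined with a higher-moment/Jensen step. Both implementations rest on the same ingredients (Lemmas \ref{lem3.1}--\ref{lem3.3} applied to $\tau\circ\vt^j-\bar\tau$ and $\eta\circ\vt^j$, and Lemma \ref{lem5.1} or \eqref{5.4}), and both yield a positive power of $\ve$; yours has the minor advantages of an explicit exponent $\del_3$ and of making transparent that the window is genuinely of CLT size $\ve^{-1}$ rather than $\ve^{-3/2}$, while the paper's stationarity shortcut avoids the block covering at the price of compensating the factor $\ve^{-2}$ from the union bound by taking higher moments.
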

 \begin{proof} Set
 \[
 n(t,\om)=\max\{ k:\,\Te_k(\om)\leq t\},\,\,\Gam(t,q)=\{\om:\, |n(t,\om)-[t/\bar\tau]|>q\}
 \]
 and $\Del(k,q)=\{\om:\, |\Te_k(\om)-\bar\tau k|>q\}$. Then by (\ref{2.5}) and (\ref{2.21}),
 \begin{equation}\label{5.12}
 |I^\ve_4(t)|\leq\ve L\hat L(1+J_1^\ve(t)+\bar\tau)+\ve J^\ve_2([t/\ve^2\bar\tau])+\ve^2L^2J_3(t)
 \end{equation}
 where
 \begin{equation}\label{5.13}
 J_1^\ve(t)=\bbI_{\Gam(t/\ve^2,\ve^{-3/2})}|\Te_{[t/\ve^2\bar\tau]}-t/\ve^2|\leq\bbI_{\Del([t/\ve^2\bar\tau],
 \ve^{-3/2}-\hat L-2\hat L^{-1})}|\Te_{[t/\ve^2\bar\tau]}-\bar\tau[t/\ve^2\bar\tau]|,
 \end{equation}
 \begin{equation}\label{5.14}
 J_2(m)=\max_{0\leq n\leq\ve^{-3/2}}|\sum_{k:\, |k-m|\leq n}\eta\circ\vt^k|
 \end{equation}
 and
 \begin{equation}\label{5.15}
 J_3^\ve(t)=|\Te_{[t/\ve^2\bar\tau]}-t/\ve^2|\leq |\Te_{[t/\ve^2\bar\tau]}-\bar\tau[t/\ve^2\bar\tau]|+\bar\tau.
 \end{equation}

 By (\ref{2.5}), (\ref{2.21}), (\ref{2.23}) and Lemma \ref{lem3.1} for any $l\geq 1$,
 \begin{eqnarray}\label{5.16}
 &|E(\tau\circ\vt^{k+l}-\bar\tau|\cF_{-\infty,k})|\\
 &\leq |E\big(\tau\circ\vt^{k+l}-E(\tau\circ\vt^{k+l}|\cF_{k+l-[l/3],k+l+[l/3]})|\cF_{-\infty,k}\big)|\nonumber\\
 &+|E\big(E(\tau\circ\vt^{k+l}|\cF_{k+l-[l/3],k+l+[l/3]})-\bar\tau|\cF_{-\infty,k}\big)|\leq\rho([l/3])+2\hat L\phi([l/3])
 \nonumber\end{eqnarray}
 and similarly,
 \[
 |E(\eta\circ\vt^{k+l}|\cF_{-\infty,k}\big)|\leq\rho([l/3])+2L\bar\tau\phi([l/3]).
 \]
 This together with (\ref{2.18}) and Lemmas \ref{lem3.2} and \ref{lem3.3} yields that for any $n,N\geq 1$,
 \begin{equation}\label{5.17}
 E\max_{0\leq k\leq n}|\Te_k-k\bar\tau|^{2N}\leq C_4(N)n^N
 \end{equation}
 and
  \begin{equation}\label{5.18}
 E\max_{0\leq k\leq n}|\sum_{j=0}^n\eta\circ\vt^j|^{2N}\leq C_4(N)n^N
 \end{equation}
 for some $C_4(N)>0$ independent of $n$.

 Hence, for any $N\geq 1$,
 \begin{equation}\label{5.19}
 E\sup_{0\leq t\leq T}|J^\ve_3(t)|^{2N}\leq 2^{2N-1}(C_4(N)[T/\ve^2\bar\tau]^N+\bar\tau^N)
 \end{equation}
 and by the stationarity of the sequence $\eta\circ\vt^k,\, k\in\bbZ$,
 \begin{eqnarray}\label{5.20}
 & E\sup_{0\leq t\leq T}(J^\ve_2([t/\ve^2\bar\tau]))^{2N}\leq E\max_{0\leq m\leq T/\ve^2\bar\tau}(J_2^\ve(m))^{2N}\\
 & \leq\frac T{\ve^2\bar\tau}E(J_2^\ve(0))^{2N}\leq C_4(N)2^NT\bar\tau^{-1}\ve^{-(2+\frac 32N)}\nonumber
\end{eqnarray}
and by the Cauchy--Schwarz inequality,
\begin{eqnarray*}
&E\sup_{0\leq t\leq T}(J^\ve_3(t))^{2N}\leq E\max_{0\leq n\leq T/\ve^2\bar\tau}(
\bbI_{\Del(n,\ve^{-3/2}-\hat L-2\hat L^{-1})}|\Te_n-\bar\tau n|^{2N})\\
 &\leq\sum_{0\leq n\leq T/\ve^2\bar\tau}E(\bbI_{\Del(n,\ve^{-3/2}-\hat L-2\hat L^{-1})}|\Te_n-\bar\tau n|^{2N})\\
 &\leq\sum_{0\leq n\leq T/\ve^2\bar\tau}\big(P(\Del(n,\ve^{-3/2}-\hat L-2\hat L^{-1}))E|\Te_n-\bar\tau n|^{4N}
 \big)^{1/2}\end{eqnarray*}

 By (\ref{5.17}) and the Chebyshev inequality for any $K\geq 1$,
 \[
 P(\Del(n,\ve^{-3/2}-\hat L-2\hat L^{-1}))\leq (\ve^{-3/2}-\hat L-2\hat L^{-1})^{-2K}(T/\bar\tau)^K\ve^{-2K},
 \]
 and so
 \begin{equation}\label{5.21}
 E\sup_{0\leq t\leq T}(J_1^\ve(t))^{2N}\leq C_5(K,N,T)\ve^{\frac 12K-2N-2}
 \end{equation}
 for some $C_5(K,N,T)>0$ independent of $\ve>0$. Finally, we obtain (\ref{5.11}) from (\ref{5.12})--(\ref{5.15}) and
 (\ref{5.19})--(\ref{5.21}) taking $N=M$ and $K>4(M-1)$, completing the proof of both Lemma \ref{lem5.2} and
 Theorem \ref{thm2.2}.
  \end{proof}

%\bibliography{matz_nonarticles,matz_articles}

\begin{thebibliography}{Bow75}

\itemsep=\smallskipamount


\bibitem{AM} M. Antoniou and I. Melbourne, {\em Rate of convergence in the
weak invariance principle for deterministic systems}, Comm. Math. Phys. 369
(2019), 1147--1165.

\bibitem{Ber} M.S. Berger, {\em Nonlinearity and Functional Analysis}, Acad. Press,
New York, 1977.

\bibitem{Bil} P. Billingsley, {\em Convergence of Probability Measures}, 2nd ed., J.Willey,
New York, 1999.


\bibitem{Bor} A.N. Borodin, {\em A limit theorem for solutions of differential equations with
random right-hand side}, Theory Probab. Appl. 22 (1977), 482--497.


\bibitem{Bow}
R. Bowen, {\em Equilibrium States and the Ergodic Theory of Anosov
Diffeomorphisms}, Lecture Notes in Math. 470, Springer--Verlag, Berlin, 1975.


\bibitem{Bra} R.C. Bradley, {\em Introduction to Strong Mixing Conditions,}
Kendrick Press, Heber City, 2007.

%\bibitem{BF} A.N. Borodin and M.I. Freidlin, {\em Fast oscillating random perturbations of dynamical
%systems with conservation laws}, Annales de l'I.H.P., sec. B, 31 (1995), 485--525.


%\bibitem{BDG} E. Bayraktar, Ya. Dolinsky and J. Guo, {\em Recombining tree approximations
%for optimal stopping for diffusions}, SIAM J. Financial Math. 9 (2018), 602--633.



\bibitem{BP} I. Berkes and W. Philipp, {\em Approximation theorems for independent and
weakly dependent random vectors}, Annals Probab. 7 (1979), 29--54.


\bibitem{BR} R. Bowen and D. Ruelle, {\em The ergodic theory of Axiom A
flows,} Invent. Math. {\bf 29} (1975), 181--202.


\bibitem{Chu} K.-L. Chung, {\em A Course in Probability}, 3d edition, Acad. Press,
San Diego, Ca., 2001.


\bibitem{Cla} D. S. Clark, {\em A short proof of a discrete Gronwall inequality},
Discrete Appl. Math. 16 (1987), 279--281.



%\bibitem{CE} R. Cogburn and J.A. Ellison, {\em A stochastic theory of adiabatic invariance},
%Commun. Math. Phys. 149 (1992), 97--126.



\bibitem{CFKMZ} I. Chevyrev, P.K. Friz, A. Korepanov, I. Melbourne, H. Zhang,
{\ Deterministic homogenization under optimal moment assumptions for fast-slow systems.
Part 2}, arXiv: 1903.10418.


%\bibitem{Dol} Y. Dolinsky, {\em Applications of weak convergence for hedging of game options},
%Ann. Appl. Probab. 20 (2010), 1891--1906.


\bibitem{DP} H. Dehling and W. Philipp, {\em Empirical process technique for dependent
data}, In: H.G. Dehling, T. Mikosch and MSorenson (Eds.), {\em Empirical Process Technique for
 Dependent Data}, p.p. 3--113, Birkh\" auser, Boston, 2002.


\bibitem{DeP} M. Denker and W. Philipp, {\em Approximation by Brownian motion for Gibbs
measures and flows under a function}, Ergod. Th. \& Dynam. Sys. 4 (1984), 541--552.


%\bibitem{Ebe} E. Eberlein, {\em Strong approximation of continuous time stochastic processes},
%J. Multivar. Anal. 31 (1989), 220--235.


\bibitem{Fle} W.H. Fleming, {\em Functions of Several Variables}, Springer,
New York, 1977.


\bibitem{Fre} M.I. Freidlin, {\em On the factorization of non-negative definite matrices},
Theory Probab. Appl. 13 (1968), 354--356.


%\bibitem{GS} U. Gruber and M. Schweizer, {\em A diffusion limit for generalized correlated
%random walks}, J. Appl. Probab. 43 (2006), 60--73.


%\bibitem{He} H. He, {\em Convergence from discrete-to continuous-time contingent claims
%prices}, Review Financial Studies 3 (1990), 523--546.


\bibitem{GM} G.A. Gottwald and I. Melbourne, {\em Homogenization for deterministic maps
and multiplicative noise}, Proc. Royal Soc. A 469:20130201.



%\bibitem{Had} J. Hadamard, Sur les transformations ponctuelles, Bull. Math. France 34 %(1904), 71--84.



\bibitem{Hei}
L. Heinrich, {\em Mixing properties and central limit theorem for a class of
non-identical piecewise monotonic $C^2$-transformations},
Mathematische Nachricht. 181 (1996), 185--214.




\bibitem{HK} Ye. Hafouta and  Yu. Kifer, {\em Nonconventional Limit Theorems and
 Random Dynamics}, World Scientific, Singapore, 2018.



\bibitem{IW} N. Ikeda and S. Watanabe, {\em Stochastic Differential Equations
and Diffusion Processes 2nd. ed.}, North-Holland, Amsterdam, 1989.



\bibitem{Kha} R.Z. Khasminskii, {\em A limit theorem for the solution of differential
equations with random right-hand sides}, Theory Probab. Appl. 11 (1966), 390--406.



%\bibitem{Ki03} Yu. Kifer,  {\em $L^2$ diffusion approximation for slow motion in averaging},
%Stochastics and Dynam. 3 (2003), 213--246.


%\bibitem{Ki95} Yu. Kifer, {\em Limit theorems in averaging for dynamical systems},
%Ergod. Th. \& Dynam. Sys. (1995), 1143--1172.


\bibitem{Ki19} Yu. Kifer, {\em Lectures on Mathematical Finance and Related Topics},
World Scientific, Singapore, 2020.


\bibitem{Ki20} Yu. Kifer, {\em Strong diffusion approximation in averaging and value
computation in Dynkin's games}, arXiv: 2011.07907


\bibitem{KM} D. Kelly and I. Melbourne, {\em Smooth approximation of stochastic
differential equations}, Ann. Probab. 44 (2016), 479--520.


\bibitem{KP} J. Kuelbs and W. Philipp, {\em Almost sure invariance principles for partial sums of mixing $B$-valued random variables}, Annals Probab. 8
    (1980), 1003--1036.


\bibitem{Mao} X. Mao, {\em Stochastic Differential Equations and Applications}, 2nd. ed.,
Woodhead, Oxford, 2010.


\bibitem{MN}
I. Melbourne and M. Nicol, {\em Almost sure invariance principle for nonuniformly
hyperbolic systems}, Commun. Math. Phys. 260 (2005), 131--146.


\bibitem{MN2}
I. Melbourne and M. Nicol, {\em A vector-valued almost sure invariance principle for hyperbolic dynamical systems}, Ann. Probab. 37 (2009),
478--505.


%\bibitem{Ni} L. Nirenberg, {\em Topics in Nonlinear Functional Analysis}, Courant
%Lect. Notes Math. 6, New York University, Courant Inst., New York 2001.


\bibitem{MP1} D. Monrad and W. Philipp, {\em Nearby variables with nearby laws and a strong
approximation theorem for Hilbert space valued martingales}, Probab. Th. Rel. Fields 88
(1991), 381--404.


\bibitem{MP2} D. Monrad and W. Philipp, {\em The problem of embedding vector-valued martingales in
a Gaussian process}, Theory Probab. Appl. 35 (1991), 374--377.

\bibitem{PK} G. C. Papanicolaou and W. Kohler, {\em Asymptotic theory of mixing
stochastic ordinary differential equations}, Comm. Pure Appl. Math. 27 (1974),
641--668.


\bibitem{PS} W. Philipp and W. Stout, {\em Almost sure invariance principles for partial
sums of weakly dependent random variables}, Memoir AMS 161, 1975.


\bibitem{Str} V. Strassen, {\em Almost sure behavior of sums of independent random variables
and martingales}, Proc. Fifth Berkeley Symp. Math. Stat. Probab., II, Part 1, 315--343.


\bibitem{SV} D.W. Stroock and S.R.S. Varadhan, {\em Multidimensional Diffusion processes},
Springer-Verlag, Berlin, 1997.


%\bibitem{Zai} A. Yu. Zaitsev, Multidimensional version of a result of Sakharenko
%in the invariance principle for vectors with finite exponential moments, I--III,
%Theory Probab. Appl. 45 (2001), 624--641; 46 (2002), 490--514, 676--698.

\end{thebibliography}
%\bibliographystyle{alpha}

\end{document}